\DeclareFontFamily{OT2}{cmr}{\hyphenchar\font45 }
\DeclareFontShape{OT2}{cmr}{m}{n}{
<5><6><7><8><9>gen*wncyr
<10><10.95><12><14.4><17.28><20.74><24.88>wncyr10}{}
\DeclareFontShape{OT2}{cmr}{b}{n}{
<5><6><7><8><9>gen*wncyb
<10><10.95><12><14.4><17.28><20.74><24.88>wncyb10}{}
\DeclareMathAlphabet{\mathcyr}{OT2}{cmr}{m}{n}
\DeclareMathAlphabet{\mathcyb}{OT2}{cmr}{b}{n}
\SetMathAlphabet{\mathcyr}{bold}{OT2}{cmr}{b}{n}
\DeclareMathOperator*{\foo}{\SH}
\theoremstyle{plain}
\newtheorem{theorem}{Theorem}[section]
\newtheorem{proposition}[theorem]{Proposition}
\newtheorem{lemma}[theorem]{Lemma}
\newtheorem{corollary}[theorem]{Corollary}
\newtheorem{conjecture}[theorem]{Conjecture}
\theoremstyle{definition}
\newtheorem{definition}[theorem]{Definition}
\newtheorem{example}[theorem]{Example}
\newtheorem{remark}[theorem]{Remark}
\newcommand{\bZ}{\mathbb{Z}}
\newcommand{\bQ}{\mathbb{Q}}
\newcommand{\bR}{\mathbb{R}}
\newcommand{\cA}{\mathcal{A}}
\newcommand{\wcA}{\widehat{\mathcal{A}}}
\newcommand{\cF}{\mathcal{F}}
\newcommand{\cS}{\mathcal{S}}
\newcommand{\wcS}{\widehat{\mathcal{S}}}
\newcommand{\cZ}{\mathcal{Z}}
\newcommand{\frH}{\mathfrak{H}}
\newcommand{\sh}{\mathbin{\mathcyr{sh}}}
\newcommand{\SH}{\mathbin{\mathcyr{Sh}}}
\newcommand{\bk}{\boldsymbol{k}}
\newcommand{\bl}{\boldsymbol{l}}
\newcommand{\bh}{\boldsymbol{h}}
\newcommand{\pp}{\boldsymbol{p}}
\newcommand{\h}{\mathrm{h}}
\newcommand{\rt}{\mathrm{rt}}
\newcommand{\dep}{\mathrm{dep}}
\newcommand{\wt}{\mathrm{wt}}
\newcommand{\jump}[1]{\ensuremath{\llbracket #1 \rrbracket}}
\newcommand{\MT}{\mathrm{MT}}
\numberwithin{equation}{section}
\address{Multiple Zeta Research Center, Kyushu University, 744, Motooka, Nishi-ku, Fukuoka, 819-0395, Japan}
\email{m-ono@math.kyushu-u.ac.jp}
\address{Research Alliance Center for Mathematical Sciences, Tohoku University, 6-3, Aoba, Aramaki, Aoba-Ku, Sendai, 980-8578, Japan}
\email{shinichiro.seki.b3@tohoku.ac.jp}
\address{Department of Mathematics, Faculty of Science and Technology, Keio University, 3-14-1 Hiyoshi, Kouhoku-ku, 
Yokohama, 223-8522, Japan}
\email{yamashu@math.keio.ac.jp}
\title{Truncated $t$-adic symmetric multiple zeta values and double shuffle relations}
\author{Masataka Ono \and Shin-ichiro Seki \and Shuji Yamamoto}
\thanks{This research was supported in part by JSPS KAKENHI Grant Numbers 26247004, 16J01758, JP16H06336, 18J00151, 18K03221, 18H05233.}
\subjclass[2010]{11M32, 05C05.}
\keywords{$t$-adic symmetric multiple zeta values, double shuffle relation, Kaneko--Zagier's conjecture, multiple zeta values of Mordell--Tornheim type.}
\begin{document}

\maketitle
%%%%%%%%%%%%%%%%%%%%%%%%%%%%%%%%%%%%%%%%%%%%%%%%%%
%%%%%%%%%%%%%%%%%%%%%%%%%%%%%%%%%%%%%%%%%%%%%%%%%%
%abstract
%%%%%%%%%%%%%%%%%%%%%%%%%%%%%%%%%%%%%%%%%%%%%%%%%%
%%%%%%%%%%%%%%%%%%%%%%%%%%%%%%%%%%%%%%%%%%%%%%%%%%
\begin{abstract}
We study a refinement of the symmetric multiple zeta value, called the $t$-adic symmetric multiple zeta value, 
by considering its finite truncation. 
More precisely, two kinds of regularizations (harmonic and shuffle) give 
two kinds of the $t$-adic symmetric multiple zeta values, 
thus we introduce two kinds of truncations correspondingly. 
Then we show that our truncations tend to the corresponding $t$-adic symmetric multiple zeta values, 
and satisfy the harmonic and shuffle relations, respectively. 
This gives a new proof of the double shuffle relations for $t$-adic symmetric multiple zeta values, 
first proved by Jarossay. 
In order to prove the shuffle relation, we develop the theory of truncated 
$t$-adic symmetric multiple zeta values associated with $2$-colored rooted trees. 
Finally, we discuss a refinement of Kaneko--Zagier's conjecture and 
the $t$-adic symmetric multiple zeta values of Mordell--Tornheim type. 
\end{abstract}
%%%%%%%%%%%%%%%%%%%%%%%%%%%%%%%%%%%%%%%%%%%%%%%%%%
%%%%%%%%%%%%%%%%%%%%%%%%%%%%%%%%%%%%%%%%%%%%%%%%%%
%Section 1
%%%%%%%%%%%%%%%%%%%%%%%%%%%%%%%%%%%%%%%%%%%%%%%%%%
%%%%%%%%%%%%%%%%%%%%%%%%%%%%%%%%%%%%%%%%%%%%%%%%%%
\section{Introduction}
%%%%%%%%%%%%%%%%%%%%%%%%%%%%%%%%%%%%%%%%%%%%%%%%%%
%Subsection 1.1
%%%%%%%%%%%%%%%%%%%%%%%%%%%%%%%%%%%%%%%%%%%%%%%%%%
\subsection{Main results}
For a tuple of non-negative integers $\bk=(k_1, \ldots, k_r)$, 
we set $\wt(\bk)\coloneqq k_1+\cdots+k_r$ and $\dep(\bk)\coloneqq r$, 
and call them the \emph{weight} and the \emph{depth} of $\bk$, respectively.
Such $\bk$ is called an \emph{index} if none of its entries is zero. 
In particular, there is a unique index of depth $0$, which we call the \emph{empty index} 
and denote by $\varnothing$. % $\wt(\varnothing)\coloneqq 0$. 
An index $\bk=(k_1, \ldots, k_r)$ is said \emph{admissible} if 
$r>0$ and $k_r\geq 2$, or $\bk=\varnothing$. 
For a non-empty admissible index $\bk=(k_1, \ldots, k_r)$, 
the \emph{multiple zeta value} (MZV) $\zeta(\bk)$ is defined by
\[
\zeta(\bk)\coloneqq\sum_{0<n_1<\cdots<n_r}\frac{1}{n^{k_1}_1\cdots n^{k_r}_r}, 
\]
while $\zeta(\varnothing)$ is set to be $1$. 

There are also various variants of MZV. 
One of such variants, called the \emph{$t$-adic symmetric multiple zeta value} 
and denoted by $\zeta^{}_{\wcS}(\bk)$, is the main object of this paper. 
We also call it the \emph{$\wcS$-MZV} for short. 
For the relationship between the $\wcS$-MZV and other variants, see the next subsection.
 
In order to define the $\wcS$-MZV, we first introduce two values $\zeta_{\wcS}^{\ast}(\bk)$ and $\zeta_{\wcS}^{\sh}(\bk)$.
For tuples $\bk=(k_1,\dots, k_r)$ and $\bl=(l_1,\dots, l_r)$, we set 
\begin{gather*}
\bk+\bl\coloneqq (k_1+l_1,\ldots,k_r+l_r),\quad 
b\binom{\bk}{\bl}\coloneqq\prod_{j=1}^r\binom{k_j+l_j-1}{l_j},\\
\overline{\bk}\coloneqq(k_r,\dots, k_1),\quad \bk_{[i]}\coloneqq(k_1,\dots, k_i), 
\quad \bk^{[i]}\coloneqq(k_{i+1},\dots,k_r)\quad (0\leq i\leq r). 
\end{gather*}
Regarding the depth $0$ case, we understand $\varnothing+\varnothing=\varnothing$, 
$b\binom{\varnothing}{\varnothing}=1$, $\overline{\varnothing}=\varnothing$ 
and $\bk_{[0]}=\bk^{[r]}=\varnothing$. 
Let $\cZ$ be the $\bQ$-subalgebra of $\bR$ generated by all MZVs, 
and set $\overline{\cZ}\coloneqq\cZ/\pi^2\cZ$ (recall that $\pi^2=6\zeta(2)\in\cZ$). 
We denote by $\zeta^*$ and $\zeta^{\sh}$ the harmonic and shuffle regularized MZVs, 
respectively (see \S\ref{subsec:Preliminaries} below for the definitions). 
%%%%%%%%%%%%%%%%%%%%%%%%%%%%%%%%%%%%%%%%%%%%%%%%%%
\begin{definition}\label{def:t-adic SMZV}
For $\bullet\in\{\ast,\sh\}$ and an index $\bk=(k_1,\dots,k_r)$, we define 
$\zeta^{\bullet}_{\wcS}(\bk)$ as an element of $\cZ\jump{t}$, with $t$ being an indeterminate, by 
\begin{equation}\label{eq:t-adic SMZV}
\zeta^\bullet_{\wcS}(\bk)\coloneqq\sum_{i=0}^r(-1)^{\wt(\bk^{[i]})}\zeta^{\bullet}(\bk_{[i]})
\sum_{\bl\in\bZ_{\geq 0}^{r-i}}b\binom{\bk^{[i]}}{\bl}\zeta^{\bullet}(\overline{\bk^{[i]}+\bl})t^{\wt(\bl)}.
\end{equation}
\end{definition}
%%%%%%%%%%%%%%%%%%%%%%%%%%%%%%%%%%%%%%%%%%%%%%%%%%
We will prove that $\zeta^{\ast}_{\wcS}(\bk)-\zeta^{\sh}_{\wcS}(\bk)\in \pi^2\cZ\jump{t}$ 
(Proposition \ref{prop:indep-bullet}).
Thus, the following definition is independent of the choice of the regularization 
$\bullet \in \{*, \sh\}$.
%%%%%%%%%%%%%%%%%%%%%%%%%%%%%%%%%%%%%%%%%%%%%%%%%%
\begin{definition}[$\wcS$-MZV]\label{def:wcS-MZV}
For an index $\bk$, we define the \emph{$t$-adic symmetric multiple zeta value} ($\wcS$-MZV)
$\zeta^{}_{\wcS}(\bk)$ by
\[
\zeta^{}_{\wcS}(\bk)\coloneqq\zeta^\bullet_{\wcS}(\bk) \bmod{\pi^2}\in\overline{\cZ}\jump{t}.
\]
\end{definition}
%%%%%%%%%%%%%%%%%%%%%%%%%%%%%%%%%%%%%%%%%%%%%%%%%%

The notion of $\wcS$-MZV, as well as its motivic version, appeared first in the literature 
in Jarossay's article \cite[Definition 1.3]{J4}. He defined it in terms of the Drinfeld associator, 
and later named it the \emph{$\Lambda$-adjoint multiple zeta value}, 
where $\Lambda$ is an indeterminate corresponding to our $t$. 
On the other hand, the second author learned the idea of defining the $\wcS$-MZV 
by the expression \eqref{eq:t-adic SMZV} from Hirose in summer of 2015. 
The fact that Jarossay's $\Lambda$-adjoint MZV and our $\wcS$-MZV coincide up to sign 
was communicated to us by Tasaka, and then by Jarossay himself. 

Jarossay \cite{J5} proved the \emph{double shuffle relation} (DSR) for the $\Lambda$-adjoint MZVs. 
In terms of our $\wcS$-MZVs, it can be stated as follows: 

%%%%%%%%%%%%%%%%%%%%%%%%%%%%%%%%%%%%%%%%%%%%%%%%%%
\begin{theorem}[DSR for $\wcS$-MZVs]\label{DSR for t-adic SMZV}
For any indices $\bk$ and $\bl$, we have 
\begin{align}
&\zeta_{\wcS}^*(\bk*\bl)=\zeta_{\wcS}^*(\bk)\zeta_{\wcS}^*(\bl),\\
&\zeta_{\wcS}^{\sh}(\bk \sh \bl)=(-1)^{\wt(\bl)}\sum_{\bl'\in \bZ^{\dep{(\bl)}}_{\geq0}}
b\binom{\bl}{\bl'}\zeta_{\wcS}^{\sh}(\bk, \overline{\bl+\bl'}){t}^{\wt(\bl')}.
\end{align}
\end{theorem}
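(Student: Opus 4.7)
The plan is to introduce two finite analogues $\zeta^{*,N}_{\wcS}(\bk)$ and $\zeta^{\sh,N}_{\wcS}(\bk)$ of the $t$-adic symmetric values, prove that each satisfies the corresponding (harmonic or shuffle) product relation at every level $N$, and then recover Theorem \ref{DSR for t-adic SMZV} by passing to the limit $N \to \infty$ in $\cZ\jump{t}$.

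For the harmonic relation, I would define $\zeta^{*,N}_{\wcS}(\bk)$ by replacing each occurrence of $\zeta^{*}$ in \eqref{eq:t-adic SMZV} by the corresponding truncated harmonic nested sum. Because truncated MZVs satisfy the stuffle product on the nose at each fixed truncation level, a direct unfolding of the definition --- using only the fact that the binomial weights $b\binom{\bk}{\bl}$ behave well under concatenation of tuples --- should yield $\zeta^{*,N}_{\wcS}(\bk*\bl) = \zeta^{*,N}_{\wcS}(\bk)\,\zeta^{*,N}_{\wcS}(\bl)$. Establishing next that $\zeta^{*,N}_{\wcS}(\bk) \to \zeta^{*}_{\wcS}(\bk)$ coefficient-wise in $t$ as $N\to\infty$ --- a consequence of the fact that every factor in \eqref{eq:t-adic SMZV} is a harmonic-regularized limit of its truncation --- then concludes the harmonic half.

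The shuffle half is the principal obstacle. Shuffle products are naturally realized by iterated integrals, whereas the truncations are nested sums, and moreover the target identity involves a nontrivial ``shift'' $\bl'$ and sign $(-1)^{\wt(\bl)}$ reflecting the dual nature of the $\wcS$-MZV. To handle this I plan, as announced in the abstract, to introduce a broader class of truncated $\wcS$-MZV-like objects indexed by $2$-colored rooted trees. Each node of such a tree will carry a positive integer together with a color indicating whether its variable plays the role of an ``upper'' variable (contributing, in the linear case, to $\zeta^{\bullet}(\bk_{[i]})$ in \eqref{eq:t-adic SMZV}) or a ``lower'' variable (contributing to the tail $\zeta^{\bullet}(\overline{\bk^{[i]}+\bl})\,t^{\wt(\bl)}$). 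Linear trees recover $\zeta^{\sh,N}_{\wcS}(\bk)$, while branching trees arise naturally when one expands a product of iterated integrals via the shuffle product and then regroups the resulting summands according to their integration variable tree.

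The heart of the argument would then be a single recursion at the tree level: an induction on the underlying tree, using the basic shuffle identity for truncated iterated integrals at each peeled leaf, should yield a clean shuffle-type functional equation for the tree-indexed truncations. Specializing to the trees associated to $\bk\sh\bl$ and to $(\bk, \overline{\bl+\bl'})$ respectively, and extracting the $b\binom{\bl}{\bl'}$ weights from the combinatorics of recoloring subtrees, should give the truncated analogue
\[
\zeta^{\sh,N}_{\wcS}(\bk \sh \bl) = (-1)^{\wt(\bl)}\sum_{\bl' \in \bZ_{\geq 0}^{\dep(\bl)}} b\binom{\bl}{\bl'}\, \zeta^{\sh,N}_{\wcS}(\bk, \overline{\bl+\bl'})\, t^{\wt(\bl')},
\]
and letting $N\to\infty$ yields the second identity of the theorem. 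The hardest step will be to design the tree-indexed truncations so that (a) they converge to meaningful $t$-adic series as $N\to\infty$, and (b) the inductive shuffle relation closes on itself without boundary terms that would destroy the clean form of the final identity; the presence of the reversed-and-shifted tail $\zeta^{\bullet}(\overline{\bk^{[i]}+\bl})\,t^{\wt(\bl)}$ is precisely what makes both requirements delicate, and is the reason that $2$-colored (rather than ordinary) trees are needed.
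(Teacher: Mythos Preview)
Your overall strategy---define truncated versions, prove the DSR at each finite level, then take the limit---is exactly the paper's approach. However, there is a genuine gap in the convergence step. You write that $\zeta^{*,N}_{\wcS}(\bk)\to\zeta^{*}_{\wcS}(\bk)$ is ``a consequence of the fact that every factor in \eqref{eq:t-adic SMZV} is a harmonic-regularized limit of its truncation.'' That is false: when $\bk_{[i]}$ is not admissible (e.g.\ ends in $1$), the truncation $\zeta_N(\bk_{[i]})$ diverges like powers of $\log N$, and $\zeta^{*}(\bk_{[i]})$ is defined by regularization, not as a limit. What saves the day is that the particular $\bQ$-linear combination of $z$-words appearing in each $t$-coefficient---what the paper calls $w^{\bullet}_{\wcS,n}(\bk)$---lies in $\frH^0$, so the divergent parts cancel and the limit exists. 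Proving $w^{\bullet}_{\wcS,n}(\bk)\in\frH^0$ (Proposition~\ref{w in H0}) is a genuine combinatorial computation that you have not addressed, and without it the limit step fails for both $\bullet=\ast$ and $\bullet=\sh$.

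Your description of the $2$-colored rooted tree machinery is also off-target. In the paper, the index lives on the \emph{edges}, the coloring $V_\bullet$ versus $V_\circ$ records which vertices carry summation variables (not ``upper versus lower'' in your sense), and the truncated value is a concrete nested sum rather than an iterated integral. Crucially, the sign $(-1)^{\wt(\bl)}$ and the binomial weights $b\binom{\bl}{\bl'}$ do not come from ``recoloring subtrees'' or an induction on leaves: they arise from a single \emph{change-of-root} identity (Proposition~\ref{change of root}), which compares $\zeta^{}_{\wcS,M}(X_1;\bk)$ and $\zeta^{}_{\wcS,M}(X_2;\bk)$ for two trees differing only in the root, and produces exactly the factor $(-1)^{k_e}\binom{k_e+l_e-1}{l_e}t^{l_e}$ on each edge of the path between the two roots. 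The shuffle relation then follows by applying this root change to a Y-shaped tree (one branch for $\bk$, one for $\bl$) and invoking a separate reduction (Theorem~\ref{thm: explicit calc}) that expresses the Y-shaped value as $Z^{\sh}_{\wcS,M}(z_{\bk}\sh z_{\bl})$. Your sketch does not isolate either of these two ingredients, and the ``peeled leaf'' induction you describe would not by itself produce the shift $\bl\mapsto\bl+\bl'$.
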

%%%%%%%%%%%%%%%%%%%%%%%%%%%%%%%%%%%%%%%%%%%%%%%%%%
Here $\zeta_{\wcS}^*(\bk*\bl)$ (resp.~$\zeta_{\wcS}^{\sh}(\bk\sh\bl)$) is defined as $\sum_{\bh}\zeta_{\wcS}^*(\bh)$ 
(resp.~$\sum_{\bh'}\zeta_{\wcS}^{\sh}(\bh')$) when $z_{\bk}*z_{\bl}=\sum_{\bh}z_{\bh}$ 
(resp.~$z_{\bk}\sh z_{\bl}=\sum_{\bh'}z_{\bh'}$), respectively.
See \S\ref{subsec:Preliminaries} for the definitions of these symbols.
We use the same convention for other zeta values.
In this paper, we give an alternative proof of Theorem \ref{DSR for t-adic SMZV}. 
In fact, we refine the theorem by considering the following truncations. 
%%%%%%%%%%%%%%%%%%%%%%%%%%%%%%%%%%%%%%%%%%%%%%%%%%
\begin{definition}[Truncated $\wcS$-MZVs]\label{def:truncated-sums}
Let $\bk=(k_1, \ldots, k_r)$ be an index and $M$ a positive integer.
We define the \emph{$\ast$-truncated $\wcS$-MZV} $\zeta_{\wcS,M}^{\ast}(\bk)$ by
\[
\zeta^{\ast}_{\wcS, M}(\bk)\coloneqq\sum_{i=0}^r\sum_{\substack{0<n_1<\cdots<n_i<M \\ -M<n_{i+1}<\cdots<n_r<0}}
\frac{1}{n^{k_1}_1\cdots n^{k_i}_i(n_{i+1}+t)^{k_{i+1}}\cdots (n_r+t)^{k_r}} \in \bQ\jump{t}
\]
and the \emph{$\sh$-truncated $\wcS$-MZV} $\zeta_{\wcS,M}^{\sh}(\bk)$ by
\[
\zeta^{\sh}_{\wcS, M}(\bk)
\coloneqq\sum_{i=0}^r\sum_{\substack{0<n_1<\cdots<n_i \\ n_{i+1}<\cdots<n_r<0 \\ n_i-n_{i+1}<M}}
\frac{1}{n^{k_1}_1\cdots n^{k_i}_i(n_{i+1}+t)^{k_{i+1}}\cdots (n_r+t)^{k_r}} \in \bQ\jump{t}.
\]
\end{definition}
%%%%%%%%%%%%%%%%%%%%%%%%%%%%%%%%%%%%%%%%%%%%%%%%%%
The first main result of this paper states that these truncated $\wcS$-MZVs 
tend to the corresponding $\wcS$-MZVs. 
%%%%%%%%%%%%%%%%%%%%%%%%%%%%%%%%%%%%%%%%%%%%%%%%%%
\begin{theorem}[{= Theorem \ref{lim}}]\label{main1}
For any index $\bk$ and $\bullet \in \{\ast, \sh\}$, we have 
\[
\zeta^{\bullet}_{\wcS}(\bk)=\lim_{M\to\infty}\zeta^\bullet_{\wcS, M}(\bk), 
\]
where the limit is taken coefficientwise as the power series in $t$. 
\end{theorem}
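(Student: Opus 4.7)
I proceed with the $\ast$-case; the $\sh$-case goes through analogously, using the shuffle regularization in place of the harmonic one. For each fixed $i$, the two conditions $0<n_1<\cdots<n_i<M$ and $-M<n_{i+1}<\cdots<n_r<0$ are independent, so the sum defining $\zeta^{\ast}_{\wcS,M}(\bk)$ factors. Substituting $m_j:=-n_j$ for $j>i$, using $(n_j+t)^{-k_j}=(-1)^{k_j}(m_j-t)^{-k_j}$, and expanding $(m_j-t)^{-k_j}$ as a binomial series in $t$, I obtain
\[
\zeta^{\ast}_{\wcS,M}(\bk) = \sum_{i=0}^r (-1)^{\wt(\bk^{[i]})}\, \zeta_{<M}(\bk_{[i]}) \sum_{\bl \in \bZ_{\geq 0}^{r-i}} b\binom{\bk^{[i]}}{\bl}\, t^{\wt(\bl)}\, \zeta_{<M}(\overline{\bk^{[i]}+\bl}),
\]
where $\zeta_{<M}(\bh):=\sum_{0<n_1<\cdots<n_s<M}n_1^{-h_1}\cdots n_s^{-h_s}$. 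The coefficient of each $t^N$ is thus a finite sum of products of truncated harmonic sums, and the claim reduces to showing that this coefficient converges to the corresponding coefficient of $\zeta^{\ast}_{\wcS}(\bk)$.

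I next invoke the standard asymptotic $\zeta_{<M}(\bh) = \zeta^{\ast}(\bh;H_{M-1}) + O\bigl((\log M)^{C(\bh)}/M\bigr)$, where $\zeta^{\ast}(\bh;T)\in\cZ[T]$ is the harmonic regularization polynomial with constant term $\zeta^{\ast}(\bh)$ and $H_{M-1}=1+\tfrac12+\cdots+\tfrac1{M-1}$. Substituting this into each factor above, the cross-error contributions are of the form (polynomial in $\log M$)$\,\cdot\, O(1/M)$, hence $o(1)$. Therefore the coefficient of $t^N$ in $\zeta^{\ast}_{\wcS,M}(\bk)$ coincides, up to an $o(1)$ error, with the same coefficient of the polynomial
\[
Q(T,t) := \sum_{i=0}^r (-1)^{\wt(\bk^{[i]})}\, \zeta^{\ast}(\bk_{[i]};T) \sum_{\bl \in \bZ_{\geq 0}^{r-i}} b\binom{\bk^{[i]}}{\bl}\, t^{\wt(\bl)}\, \zeta^{\ast}(\overline{\bk^{[i]}+\bl};T)
\]
evaluated at $T=H_{M-1}$.

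It then suffices to prove the key algebraic identity that $Q(T,t)$ is \emph{independent} of $T$, since in that case $Q(H_{M-1},t)=Q(0,t)=\zeta^{\ast}_{\wcS}(\bk)$ and the theorem follows. I would verify $\partial Q/\partial T=0$ by applying the standard derivation formula describing how $\partial/\partial T$ acts on $\zeta^{\ast}(\bh;T)$ (from the harmonic regularization theory) and checking that the resulting terms telescope: each contribution from differentiating a ``positive'' factor $\zeta^{\ast}(\bk_{[i]};T)$ is matched by one from differentiating a ``negative'' factor $\zeta^{\ast}(\overline{\bk^{[i']}+\bl'};T)$ with suitably shifted $(i',\bl')$. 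A direct verification at low depth (e.g.\ $\bk=(1)$, where the $\log M$ divergences from the $i=0$ and $i=1$ summands visibly cancel) supports the pattern. For the $\sh$-case the constraint $n_i-n_{i+1}<M$ couples both halves, so I would decouple via an iterated-integral truncation and use the shuffle regularization polynomial $\zeta^{\sh}(\bh;T)$ together with the analogous $T$-independence. The main obstacle in both cases is precisely this $T$-independence identity: it expresses the delicate cancellation of divergent $\log M$-contributions between admissible and non-admissible summands, and is the technical heart of the argument; all remaining ingredients (factorization, binomial expansion, asymptotic substitution) are then routine.
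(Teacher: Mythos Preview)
Your $\ast$-case is essentially correct and is a close cousin of the paper's argument, just phrased analytically rather than algebraically. The paper writes the coefficient of $t^n$ in $\zeta^{\ast}_{\wcS,M}(\bk)$ as $Z_M\bigl(w^{\ast}_{\wcS,n}(\bk)\bigr)$ where
\[
w^{\ast}_{\wcS,n}(\bk)=\sum_{i=0}^r(-1)^{\wt(\bk^{[i]})}z_{\bk_{[i]}}\ast\!\!\sum_{\substack{\bl\in\bZ_{\ge0}^{r-i}\\ \wt(\bl)=n}}b\binom{\bk^{[i]}}{\bl}z_{\overline{\bk^{[i]}+\bl}}\in\frH^1,
\]
and then proves (Proposition~2.2) that $w^{\ast}_{\wcS,n}(\bk)\in\frH^0$. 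Since $Z^\ast$ is an $\ast$-homomorphism, your polynomial $Q(T,t)$ is exactly $\sum_n Z^\ast\bigl(w^{\ast}_{\wcS,n}(\bk);T\bigr)t^n$, so your ``$T$-independence'' follows immediately from $w^{\ast}_{\wcS,n}(\bk)\in\frH^0$; conversely, the telescoping you propose to verify $\partial Q/\partial T=0$, if carried out at the level of words rather than values, \emph{is} the proof of Proposition~2.2. The paper's formulation is cleaner: by staying in $\frH^1$ it avoids asymptotic expansions entirely, and once $w^{\ast}_{\wcS,n}(\bk)\in\frH^0$ one simply uses $\lim_M Z_M=Z$ on $\frH^0$.

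The genuine gap is in your $\sh$-case. Saying you will ``decouple via an iterated-integral truncation'' does not explain how the coupled constraint $n_i-n_{i+1}<M$ produces the shuffle regularization. The paper's mechanism is a concrete identity (Lemma~2.3): for any indices $\bk,\bl$,
\[
\sum_{\substack{0<m_1<\cdots<m_r\\ 0<n_1<\cdots<n_s\\ m_r+n_s<M}}\frac{1}{m_1^{k_1}\cdots m_r^{k_r}n_1^{l_1}\cdots n_s^{l_s}}=Z_M(z_{\bk}\sh z_{\bl}),
\]
proved by partial fractions. This converts the coupled inner sum directly into $Z_M\bigl(z_{\bk_{[i]}}\sh z_{\overline{\bk^{[i]}+\bl}}\bigr)$, after which the argument runs exactly parallel to the $\ast$-case with $w^{\sh}_{\wcS,n}(\bk)\in\frH^0$ (the $\sh$-half of Proposition~2.2). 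Without an identity of this type, there is no clear route from the $\sh$-truncation to the shuffle-regularized values, so your sketch does not yet constitute a proof in that case.
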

%%%%%%%%%%%%%%%%%%%%%%%%%%%%%%%%%%%%%%%%%%%%%%%%%%

Naively speaking, this result says that 
\[\zeta^{\bullet}_{\wcS}(\bk)
=\sum_{i=0}^r\sum_{\substack{0<n_1<\cdots<n_i \\ n_{i+1}<\cdots<n_r<0}}
\frac{1}{n^{k_1}_1\cdots n^{k_i}_i(n_{i+1}+t)^{k_{i+1}}\cdots (n_r+t)^{k_r}}.\]
In practice, the specific choices of partial sums 
$\zeta^{\ast}_{\wcS, M}(\bk)$ and $\zeta^{\sh}_{\wcS, M}(\bk)$ give the limits 
$\zeta^{\ast}_{\wcS}(\bk)$ and $\zeta^{\sh}_{\wcS}(\bk)$, which are different in general. 
Nevertheless, we call Theorem \ref{main1} the \emph{series expression of $\zeta^\bullet_{\wcS}(\bk)$}. 

The second main result is the DSR for the truncated $\wcS$-MZVs, 
which is a refinement of Theorem \ref{DSR for t-adic SMZV} in view of Theorem \ref{main1}. 

%%%%%%%%%%%%%%%%%%%%%%%%%%%%%%%%%%%%%%%%%%%%%%%%%%
\begin{theorem}[DSR for truncated $\wcS$-MZVs]\label{main2}
For any indices $\bk$ and $\bl$ and any positive integer $M$, we have
\begin{align}
&\zeta^{\ast}_{\wcS,M}(\bk\ast\bl)=\zeta^{\ast}_{\wcS,M}(\bk)\zeta^{\ast}_{\wcS,M}(\bl),
\label{truncated harmonic rel}\\
&\zeta^{\sh}_{\wcS,M}(\bk \sh \bl)=(-1)^{\wt(\bl)}\sum_{\bl'\in \bZ^{\dep(\bl)}_{\geq0}}
b\binom{\bl}{\bl'}\zeta^{\sh}_{\wcS,M}(\bk, \overline{\bl+\bl'}){t}^{\wt(\bl')}.
\label{truncated shuffle rel}
\end{align}
\end{theorem}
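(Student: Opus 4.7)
The two identities are of very different natures and will be attacked separately.

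For the harmonic identity \eqref{truncated harmonic rel}, the natural approach is to factor the truncated $\wcS$-MZV through its positive and negative parts. Introducing
\[
Z_M^+(\bk) \coloneqq \sum_{0 < n_1 < \cdots < n_r < M} \frac{1}{n_1^{k_1}\cdots n_r^{k_r}},
\qquad
Z_M^-(\bk) \coloneqq \sum_{-M < n_1 < \cdots < n_r < 0} \frac{1}{(n_1+t)^{k_1}\cdots (n_r+t)^{k_r}},
\]
the definition immediately gives $\zeta^\ast_{\wcS, M}(\bk) = \sum_{i=0}^{\dep(\bk)} Z_M^+(\bk_{[i]})\, Z_M^-(\bk^{[i]})$. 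Each of $Z_M^\pm$ is an iterated sum over a common totally ordered index set, hence satisfies the usual truncated stuffle identity $Z_M^\pm(\bk)\,Z_M^\pm(\bl) = Z_M^\pm(\bk \ast \bl)$. Combining this with the Hopf-algebraic fact that deconcatenation $\bk \mapsto \sum_i \bk_{[i]} \otimes \bk^{[i]}$ is a homomorphism for $\ast$, i.e.
\[
\sum_{\bh \in \bk \ast \bl}\, \sum_p \bh_{[p]} \otimes \bh^{[p]}
= \sum_{i,j} (\bk_{[i]} \ast \bl_{[j]}) \otimes (\bk^{[i]} \ast \bl^{[j]}),
\]
and applying $Z_M^+ \otimes Z_M^-$ to both sides yields \eqref{truncated harmonic rel} after rearranging the double sum.

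For the shuffle identity \eqref{truncated shuffle rel} the argument is considerably deeper: the truncation condition $n_i - n_{i+1} < M$ couples the positive and negative parts nonlocally, and the right-hand side has the same combinatorial shape as the defining expression of $\zeta^\bullet_{\wcS}$ rather than a plain product. As signaled in the abstract, my plan is to set up a framework of truncated $\wcS$-MZVs attached to $2$-colored rooted trees, where the two colors distinguish the positive (unshifted) and the negative ($t$-shifted) portions of the index. In this framework, $\zeta^{\sh}_{\wcS, M}(\bk)$ is realized as the value of a ``caterpillar'' tree whose coloring switches at most once, and the shuffle of two such caterpillars decomposes as a sum over colored trees via a tree-combinatorial shuffle formula generalizing the classical one for iterated integrals. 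An additional \emph{flip} at the color boundary, based on the Taylor expansion
\[
\frac{1}{(n+t)^k} = (-1)^k \sum_{l \ge 0}\binom{k+l-1}{l}\frac{(-t)^l}{(-n)^{k+l}},
\]
together with the change of variable $n \mapsto -n$, then converts the negative part of one factor into an index of the form $\overline{\bl + \bl'}$ attached at the tail of $\bk$, producing simultaneously the sign $(-1)^{\wt(\bl)}$, the binomial weight $b\binom{\bl}{\bl'}$, and the factor $t^{\wt(\bl')}$ on the right-hand side of \eqref{truncated shuffle rel}.

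The hard part is step two: formulating the colored-tree shuffle identity so that it is (a) combinatorially clean, (b) compatible with the gap-style truncation $n_i - n_{i+1} < M$, and (c) strong enough to yield \eqref{truncated shuffle rel} upon specialization. The delicate point is that shuffling two caterpillars a priori generates colored trees whose coloring switches many times; one must show, after the boundary flip, that contributions from all such undesired colorings cancel, leaving exactly the single-switch terms that make up the right-hand side of \eqref{truncated shuffle rel}. Once the tree-shuffle identity is proved in this adequate generality, the statement follows by specializing to the linear trees representing $\bk$ and $\bl$.
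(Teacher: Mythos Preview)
Your proof of the harmonic relation \eqref{truncated harmonic rel} is correct and is essentially a repackaging of the paper's argument: the paper writes $\zeta^{\ast}_{\wcS,M}(\bk)$ as a single sum over the totally ordered set $\{1,2,\ldots,M-1,-(M-1),\ldots,-2,-1\}$ with Kontsevich's order $\prec$, so that the stuffle product works verbatim; your splitting into $Z_M^+$ and $Z_M^-$ together with the Hopf compatibility of deconcatenation and $\ast$ is the same statement unwound into two halves.

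For the shuffle relation \eqref{truncated shuffle rel}, however, your plan diverges from the paper and contains a genuine gap. The ``two colors'' in the paper's $2$-colored rooted trees do \emph{not} separate positive and $t$-shifted portions of the index as you suppose; rather, $V_\bullet$ is the set of vertices carrying summation variables $m_v$, and the $t$-shift is introduced by summing over a choice of a single distinguished $u\in V_\bullet$. In the linear (``caterpillar'') case all vertices are black. The mechanism that produces \eqref{truncated shuffle rel} is not a shuffle decomposition of two caterpillars into many colored trees followed by a cancellation of ``bad'' colorings; it is a \emph{change of root} identity applied to a single $Y$-shaped tree. Concretely, the paper takes the tree with two branches indexed by $\bk$ and $\bl$ meeting at a vertex $v$, and computes its associated value $\zeta^{}_{\wcS,M}(X;\bk')$ in two ways: with root at $v$ one obtains $\zeta^{\sh}_{\wcS,M}(\bk\sh\bl)$ via a partial-fraction argument (the shuffle arises from repeated use of $\frac{1}{L_1\cdots L_s}=\frac{1}{L_{e'}}\sum_i\frac{L_i}{L_1\cdots L_s}$), while moving the root to the top of the $\bl$-branch produces, through the Taylor expansion of $(t-L_e)^{-k_e}$ along the path, exactly the right-hand side of \eqref{truncated shuffle rel}. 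There is no cancellation step. Your speculative program of shuffling caterpillars and hoping unwanted colorings cancel is not the right picture, and as you yourself note, you have not carried out the ``hard part''; in fact that hard part, as you have framed it, does not exist in the actual proof.
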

%%%%%%%%%%%%%%%%%%%%%%%%%%%%%%%%%%%%%%%%%%%%%%%%%%

\begin{remark}
After submitting the first version of this paper to arXiv, 
Jarossay kindly wrote us a letter which explained how our Theorem~\ref{main1} and Theorem~\ref{main2} 
can be interpreted in and deduced from his theory \cite{J5}. 
He also pointed out that some of other results in this paper are particular cases of results in \cite{J5}, 
which we will mention at each point. 
Nevertheless, we consider that it is still worth publishing this paper since 
our concrete description of the truncated $\wcS$-MZVs given in Definition \ref{def:truncated-sums} 
is different from (though interpretable in) Jarossay's language using non-commutative power series, 
our proof of \eqref{truncated shuffle rel} using the theory of $2$-colored rooted trees is original, 
and the application of that theory to $\wcS$-MZV of Mordell--Tornheim type is interesting in itself. 
\end{remark}

\subsection{Background}
Our interest in the $\wcS$-MZVs is motivated by the work of Kaneko and Zagier \cite{KZ} 
concerning two kinds of variants of MZVs, called \emph{$\cA$-finite multiple zeta values} ($\cA$-MZVs) 
and \emph{symmetric multiple zeta values} ($\cS$-MZVs). First we recall the definitions of them. 

Let $\bk=(k_1, \ldots, k_r)$ be any index. For a positive integer $M$, we define 
the \emph{truncated multiple zeta value} $\zeta^{}_M(\bk)$ by
\[
\zeta^{}_M(\bk)\coloneqq\sum_{0<n_1<\cdots<n_r<M}\frac{1}{n^{k_1}_1\cdots n^{k_r}_r},
\]
where we understand $\zeta^{}_M(\varnothing)\coloneqq 1$. 
Then the $\cA$-MZV $\zeta^{}_\cA(\bk)$ and the $\cS$-MZV $\zeta^{}_\cS(\bk)$ are defined by 
\begin{align*}
\zeta^{}_\cA(\bk)&\coloneqq \bigl(\zeta^{}_p(\bk)\bmod p\bigr)_p\in \cA,\\
\zeta^{}_\cS(\bk)&\coloneqq \zeta_\cS^\bullet(\bk) \bmod \pi^2\cZ \in \overline{\cZ}. 
\end{align*}
Here $\cA$ denotes the $\bQ$-algebra 
\[
\cA\coloneqq \prod_{p}\bZ/p\bZ\Biggm/\bigoplus_{p}\bZ/p\bZ 
\]
where $p$ runs over the set of prime numbers, 
and $\zeta_{\cS}^\bullet(\bk)$ denotes the element of $\cZ$ given by 
\[
\zeta_{\cS}^{\bullet}(\bk)\coloneqq 
\sum_{i=0}^r (-1)^{\wt(\bk^{[i]})}\zeta^{\bullet}(\bk_{[i]})\zeta^{\bullet}(\overline{\bk^{[i]}}) 
\]
for $\bullet\in\{*,\sh\}$. 
The definition of $\zeta^{}_{\cS}(\bk)\in\overline{\cZ}$ is independent of the choice of $\bullet\in\{*,\sh\}$, 
since it is shown that $\zeta_{\cS}^*(\bk)\equiv\zeta_{\cS}^{\sh}(\bk)\pmod{\pi^2}$ by Kaneko--Zagier 
(in fact, our Proposition \ref{prop:indep-bullet} generalizes this congruence). 

Kaneko and Zagier conjectured that $\cA$-MZVs and $\cS$-MZVs satisfy exactly the same algebraic relations 
(see Conjecture \ref{Kanenko-Zagier conjecture} for the precise statement). 
For example, they showed that both $\cA$-MZVs and $\cS$-MZVs satisfy the following relations, 
called the \emph{double shuffle relation} (the shuffle relation \eqref{shuffle for FMZV} was also proved 
by the first author \cite[Corollary~4.1]{O} for $\cF=\cA$, and 
by Jarossay \cite[Th\'{e}or\`{e}m 1.7 i)]{J1} and Hirose \cite[Proposition 15]{Hi} for $\cF=\cS$). 

%%%%%%%%%%%%%%%%%%%%%%%%%%%%%%%%%%%%%%%%%%%%%%%%%%
\begin{theorem}[DSR for $\cA$-MZVs and $\cS$-MZVs]
Let $\cF\in\{\cA,\cS\}$. For any indices $\bk$ and $\bl$, we have
\begin{align}
&\zeta_{\cF}^{}(\bk\ast\bl)=\zeta_{\cF}^{}(\bk)\zeta_{\cF}^{}(\bl),\label{harmonic for FMZV}\\
&\zeta_{\cF}^{}(\bk\sh\bl)=(-1)^{\wt(\bl)}\zeta_{\cF}^{}(\bk, \overline{\bl}).\label{shuffle for FMZV}
\end{align}
\end{theorem}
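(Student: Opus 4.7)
The plan is to treat the four identities (harmonic and shuffle, for $\cF=\cA$ and $\cF=\cS$) in parallel, exploiting the common framework: both zeta values are built from truncated sums, directly for $\cA$ and via regularization for $\cS$.

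For the harmonic relation \eqref{harmonic for FMZV}, the starting point is the stuffle identity $\zeta_M(\bk)\zeta_M(\bl)=\zeta_M(\bk\ast\bl)$, valid for every $M\geq 1$ as an equality of rationals and proved by classifying the product of two ordered sums according to the relative orderings and coincidences of indices. Setting $M=p$ and reducing modulo $p$ gives the $\cF=\cA$ case. For $\cF=\cS$, I would substitute the defining formula of $\zeta_\cS^{\ast}$ into both sides, apply the harmonic product $\zeta^{\ast}(\boldsymbol{u})\zeta^{\ast}(\boldsymbol{v})=\zeta^{\ast}(\boldsymbol{u}\ast\boldsymbol{v})$ for regularized MZVs, and verify a combinatorial bookkeeping identity: pairs of splits $(i,j)$ of $\bk$ and $\bl$ match bijectively with pairs $(\bh,m)$ consisting of a stuffle $\bh\in\bk\ast\bl$ together with a split $m$ of $\bh$.

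For the shuffle relation \eqref{shuffle for FMZV}, the $\cF=\cA$ case I would attack by applying the substitution $n_j\mapsto p-n_{r+s+1-j}$ to the tail of $\zeta_p(\bk,\overline{\bl})$ beginning at position $r+1$. Using $\tfrac{1}{p-m}\equiv-\tfrac{1}{m}\pmod{p}$ extracts the sign $(-1)^{\wt(\bl)}$ and converts the sum into one over pairs of chains $(q_1<\cdots<q_r)$ and $(p_1<\cdots<p_s)$ in $(0,p)$ subject to the constraint $q_r+p_s<p$. The target $\sum_{\bh\in\bk\sh\bl}\zeta_p(\bh)$ is recognized, by relabelling shuffles as labelled interleavings, as the sum over pairs of chains with all entries distinct; matching these two descriptions modulo $p$ is the central combinatorial step, carried out in \cite{O}. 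For $\cF=\cS$, I would expand both sides of \eqref{shuffle for FMZV} via the definition of $\zeta_\cS^{\sh}$, apply the shuffle product $\zeta^{\sh}(\boldsymbol{u}\sh\boldsymbol{v})=\zeta^{\sh}(\boldsymbol{u})\zeta^{\sh}(\boldsymbol{v})$, and rearrange the resulting double sum over splits of shuffles, using the reduction modulo $\pi^2$ to absorb the difference between the $\ast$- and $\sh$-regularizations at non-admissible subindices; this follows the approach of Hirose \cite{Hi} and Jarossay \cite{J1}.

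The main obstacle will be the $\cF=\cA$ shuffle identity: the shuffle product does not hold at the level of truncated sums as rationals (only the stuffle does), so proving \eqref{shuffle for FMZV} modulo $p$ requires a genuine combinatorial matching between the two alternative descriptions (constrained by $q_r+p_s<p$ versus pairwise-disjoint), and relies essentially on the primality of $p$ through the reversal $\tfrac{1}{p-m}\equiv-\tfrac{1}{m}$.
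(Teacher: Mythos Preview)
Your proposal is largely viable but takes a different route from the paper in three of the four cases, and contains one genuine confusion.

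For the harmonic relation with $\cF=\cS$, you propose to expand both sides via the defining formula for $\zeta_{\cS}^{*}$, use the stuffle multiplicativity of $\zeta^{*}$, and verify a bijection between pairs of splits $(i,j)$ of $(\bk,\bl)$ and pairs (stuffle term $\bh$, split $m$ of $\bh$). This is essentially the original Kaneko--Zagier argument and it works. The paper's approach is different: it proves a \emph{series expression} $\zeta_{\cS}^{*}(\bk)=\lim_{M\to\infty}\zeta_{\cS,M}^{*}(\bk)$ (Corollary~\ref{cor:limits}) and then simply passes the truncated identity $\zeta_{\cS,M}^{*}(\bk*\bl)=\zeta_{\cS,M}^{*}(\bk)\zeta_{\cS,M}^{*}(\bl)$ to the limit. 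Your approach avoids the limit argument but needs the combinatorial bijection; the paper's avoids the bijection but needs the convergence statement.

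For the shuffle relation, both for $\cF=\cA$ and $\cF=\cS$, the paper (and the reference \cite{O} you cite) uses the machinery of $2$-colored rooted trees rather than a direct substitution or the Hirose/Jarossay manipulation of the definition; the $\cS$-case in the paper is obtained by specializing Theorem~\ref{thm:truncated shuffle} at $t=0$.

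There is a confusion in your $\cF=\cA$ shuffle sketch. You describe the shuffle side as ``pairs of chains with all entries distinct'' and the substitution side as ``pairs of chains with $q_r+p_s<p$'', and call their matching the central step. But the shuffle $z_{\bk}\sh z_{\bl}$ is performed at the level of the letters $x,y$, not the blocks $z_{k_i}$, so the ``disjoint interleavings'' picture does not apply; the correct identity is $Z_M(z_{\bk}\sh z_{\bl})=\sum_{m_r+n_s<M}\prod m_i^{-k_i}\prod n_j^{-l_j}$, which is an \emph{exact} partial-fraction identity (Lemma~\ref{lem:partial fraction shuffle}), not a mod-$p$ matching. With that lemma in hand your substitution argument goes through with the only mod-$p$ input being $\tfrac{1}{p-m}\equiv -\tfrac{1}{m}$, so the ``main obstacle'' you flag is already resolved.
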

%%%%%%%%%%%%%%%%%%%%%%%%%%%%%%%%%%%%%%%%%%%%%%%%%%
We note that the harmonic relation \eqref{harmonic for FMZV} for each $\cF\in\{\cA,\cS\}$ 
follows from the same relation of truncated MZVs $\zeta^{}_M(\bk*\bl)=\zeta^{}_M(\bk)\zeta^{}_M(\bl)$. 
This is obvious for $\cF=\cA$, while the series expression of $\zeta^*_\cS$ 
(Corollary~\ref{cor:limits} for $\bullet=*$) is needed for $\cF=\cS$. 

Rosen \cite{Ro1} introduced a natural refinement of the $\cA$-MZV in the $\bQ$-algebra 
\[
\wcA\coloneqq\varprojlim_n\Biggl(\prod_{p}\bZ/p^n\bZ\Biggm/\bigoplus_{p}\bZ/p^n\bZ\Biggr). 
\]
Slightly modifying his definition, the second author \cite{S2} defined 
the \emph{$\wcA$-finite multiple zeta value} ($\wcA$-MZV) 
by $\zeta^{}_{\wcA}(\bk)\coloneqq \bigl((\zeta^{}_p(\bk)\bmod{p^n})_p\bigr)_n\in\wcA$. 
Note that $\wcA$ is complete under the $\pp$-adic topology, 
where $\pp$ denotes the element $\bigl((p\bmod{p^n})_p\bigr)_n$ of $\wcA$, 
and $\zeta^{}_{\wcA}(\bk)$ is a lifting of $\zeta^{}_{\cA}(\bk)$ in the sense that 
$\zeta^{}_{\cA}(\bk)=\zeta^{}_{\wcA}(\bk)\bmod{\pp}$ under the canonical isomorphism $\cA\simeq \wcA/\pp\wcA$. 
The DSR for $\cA$-MZVs are extended to the DSR for $\wcA$-MZVs as follows.
%%%%%%%%%%%%%%%%%%%%%%%%%%%%%%%%%%%%%%%%%%%%%%%%%%
\begin{theorem}[DSR for $\wcA$-MZVs]\label{thm:DSR wcA}
For any indices $\bk$ and $\bl$, we have
\begin{align}
\label{eq:harmonic rel hAFMZV}
&\zeta^{}_{\wcA}(\bk\ast\bl)=\zeta^{}_{\wcA}(\bk)\zeta^{}_{\wcA}(\bl),\\
\label{eq:shuffle rel hAFMZV}
&\zeta^{}_{\wcA}(\bk \sh \bl)=(-1)^{\wt(\bl)}\sum_{\bl'\in \bZ^{\dep(\bl)}_{\geq0}}
b\binom{\bl}{\bl'}\zeta^{}_{\wcA}(\bk, \overline{\bl+\bl'}){\pp}^{\wt(\bl')}.
\end{align}
\end{theorem}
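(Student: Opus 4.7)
The plan is to prove each relation separately by reducing to an identity at the truncation parameter $M=p$ at each prime, then passing to the inverse limit over $n$.

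\textbf{Harmonic relation \eqref{eq:harmonic rel hAFMZV}.} I start from the elementary identity
\[
\zeta_M(\bk)\,\zeta_M(\bl)=\zeta_M(\bk\ast\bl),
\]
which holds as an exact identity of rational numbers for every positive integer $M$ and any indices $\bk,\bl$, by reorganizing the product of two strictly increasing chains in $\{1,\dots,M-1\}$ according to their interleaving/collision pattern. Specializing $M=p$, reducing modulo $p^n$, and taking the inverse limit in $n$ yields \eqref{eq:harmonic rel hAFMZV} in $\wcA$.

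\textbf{Shuffle relation \eqref{eq:shuffle rel hAFMZV}.} I would proceed in two steps. \emph{Step 1 (Reflection).} Starting from
\[
\zeta_p(\bk,\overline{\bl+\bl'})=\sum_{0<n_1<\cdots<n_r<M_1<\cdots<M_s<p}\frac{1}{n_1^{k_1}\cdots n_r^{k_r}\prod_{j=1}^{s}M_j^{\,l_{s+1-j}+l'_{s+1-j}}},
\]
apply the substitution $M_j\mapsto p-m_{s+1-j}$, which turns $0<M_1<\cdots<M_s<p$ into $0<m_1<\cdots<m_s<p$ and the condition $n_r<M_1$ into $n_r+m_s<p$. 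Summing the result against $(-1)^{\wt(\bl)}b\binom{\bl}{\bl'}p^{\wt(\bl')}$ over $\bl'\in\bZ_{\geq 0}^{s}$ (the series converges $p$-adically thanks to the $p^{\wt(\bl')}$ factor) and applying, coordinate by coordinate, the closed form
\[
\sum_{l'\geq 0}\binom{l+l'-1}{l'}\,\frac{p^{l'}}{(p-m)^{l+l'}}=\frac{1}{(p-m)^{l}}\left(1-\frac{p}{p-m}\right)^{-l}=\frac{(-1)^{l}}{m^{l}}
\]
yields the exact identity in $\bZ_{p}$
\[
(-1)^{\wt(\bl)}\!\sum_{\bl'\in\bZ_{\geq 0}^{s}}\!b\binom{\bl}{\bl'}p^{\wt(\bl')}\zeta_p(\bk,\overline{\bl+\bl'})=\sum_{\substack{0<n_1<\cdots<n_r<p\\0<m_1<\cdots<m_s<p\\n_r+m_s<p}}\frac{1}{n_1^{k_1}\cdots n_r^{k_r}\,m_1^{l_1}\cdots m_s^{l_s}}.
\]
\emph{Step 2 (Truncated shuffle).} Identify this constrained truncated sum with $\zeta_p(\bk\sh\bl)=\sum_{\bh\in\bk\sh\bl}\zeta_p(\bh)$ as an exact identity of rationals. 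Combining with Step 1, reducing modulo $p^n$, and passing to the inverse limit in $\wcA$ yields \eqref{eq:shuffle rel hAFMZV}.

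The main obstacle is Step 2. Although the identity is readily checked in small cases (for instance, $\bk=\bl=(2)$ and $p=5$ give $257/144$ on both sides, and $\bk=(1),\bl=(2)$ at $p=5$ gives $167/72$), it is not a formal consequence of the harmonic truncated identity. The cleanest conceptual proof proceeds by induction on $\dep(\bk)+\dep(\bl)$ using the recursive form of the shuffle product and telescoping on the largest summation variable; alternatively, one transports the $2$-colored rooted tree machinery developed later in this paper for the $\wcS$-setting to the present $\wcA$-setting. This mirrors, in the simpler $\wcA$-framework, the way the proof of Theorem~\ref{DSR for t-adic SMZV} relies on the series expression (Theorem~\ref{main1}) and the truncated shuffle relation (Theorem~\ref{main2}).
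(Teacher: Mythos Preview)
Your argument is correct, and it aligns with what the paper does. For the harmonic relation the paper says exactly what you say: it follows immediately from the truncated identity $\zeta_M(\bk*\bl)=\zeta_M(\bk)\zeta_M(\bl)$. For the shuffle relation the paper does \emph{not} give a self-contained proof at this point; it simply cites \cite[Theorem~6.4]{S1} and \cite[Lemma~4.17]{J4}.

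The ``main obstacle'' you flag in Step~2 is not an obstacle at all within the paper: your constrained sum
\[
\sum_{\substack{0<n_1<\cdots<n_r\\0<m_1<\cdots<m_s\\n_r+m_s<p}}\frac{1}{n_1^{k_1}\cdots n_r^{k_r}\,m_1^{l_1}\cdots m_s^{l_s}}
=Z_p(z_{\bk}\sh z_{\bl})
\]
is precisely Lemma~\ref{lem:partial fraction shuffle}, which the paper states and proves (via the Goncharov--Yamamoto partial fraction identity) for use in Theorem~\ref{lim}. So you do not need to set up an induction or import the $2$-colored rooted tree machinery; invoking Lemma~\ref{lem:partial fraction shuffle} closes your Step~2 immediately. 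With that, your Step~1 reflection plus Step~2 is a clean, self-contained proof of \eqref{eq:shuffle rel hAFMZV}, which is actually more than the paper itself provides at this location.
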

%%%%%%%%%%%%%%%%%%%%%%%%%%%%%%%%%%%%%%%%%%%%%%%%%%
Again, the harmonic relation \eqref{eq:harmonic rel hAFMZV} follows immediately from 
the same relation for the truncated MZVs. 
The shuffle relation \eqref{eq:shuffle rel hAFMZV} was proved independently 
by the second author \cite[Theorem 6.4]{S1} and Jarossay \cite[Lemma 4.17]{J4}.

From the perspective of the Kaneko--Zagier conjecture, it is natural to expect that 
there is also some complete algebra with residue ring $\overline{\cZ}$ 
and a lifting of $\zeta^{}_\cS(\bk)$ in that algebra. 
The consistency between Theorem \ref{DSR for t-adic SMZV} and Theorem \ref{thm:DSR wcA} 
strongly suggests that the expected lifting is $\zeta^{}_{\wcS}(\bk)$ 
in the $t$-adically complete algebra $\overline{\cZ}\jump{t}$. 
See \S4 for further discussion on this extended correspondence. 

\subsection{Contents of this paper}
This paper is organized as follows. 

\S\ref{subsec:Preliminaries} provides some preliminaries including the definitions of 
the products $*$ and $\sh$ and the corresponding regularizations. We also prove 
the congruence $\zeta^*_{\wcS}(\bk)\equiv\zeta^{\sh}_{\wcS}(\bk)\mod \pi^2$ there. 
Then the series expressions of $\zeta^{\bullet}_{\wcS}(\bk)$ (Theorem~\ref{main1}) and 
the harmonic relation for $*$-truncated $\wcS$-MZVs (Theorem~\ref{main2} \eqref{truncated harmonic rel}) 
are proved in \S\ref{subsec:lim} and \S\ref{subsec:harmonic rel}, respectively. 

In \S\ref{sec3}, we build a theory of truncated $\wcS$-MZVs associated with \emph{$2$-colored rooted trees}. 
A $2$-colored rooted tree is a combinatorial structure introduced by the first author \cite{O} 
with applications to $\cA$-MZVs including a proof of the shuffle relation \eqref{shuffle for FMZV}. 
We show that a similar argument is applicable in the context of truncated $\wcS$-MZVs. 
In \S\ref{subsec3.1}, we recall the notion of $2$-colored rooted trees and indices on them, 
and define the associated truncated $\wcS$-MZVs. We see that they include 
the $\sh$-truncated $\wcS$-MZVs given in Definition \ref{def:truncated-sums}. 
In \S\ref{subsec3.2}, we prove some basic properties of the truncated $\wcS$-MZVs 
associated with $2$-colored rooted trees. 
These properties are used in \S\ref{subsec3.3} to show the shuffle relation 
for $\sh$-truncated $\wcS$-MZVs (Theorem~\ref{main2} \eqref{truncated shuffle rel}). 
In \S\ref{subsec3.4}, we consider a fairly general class of truncated $\wcS$-MZVs 
associated with $2$-colored rooted trees, and establish an algorithm for representing those values 
in terms of the $\sh$-truncated $\wcS$-MZVs in the sense of Definition \ref{def:truncated-sums}. 

In \S\ref{subsec:KZ-conj}, we briefly discuss the Kaneko--Zagier conjecture on the correspondence 
between $\zeta^{}_{\cA}(\bk)$ and $\zeta^{}_{\cS}(\bk)$, 
and its refinement to that of $\zeta_{\wcA}(\bk)$ and $\zeta_{\wcS}(\bk)$. 
We also recall a relevant theorem of Yasuda that $\zeta^\bullet_{\cS}(\bk)$ generates $\cZ$, 
and an analogous result for $\zeta^\bullet_{\wcS}(\bk)$ due to Jarossay.
Finally, in \S\ref{subsec:MT}, we introduce the \emph{$\wcS$-MZV of Mordell--Tornheim type} 
which corresponds to \emph{$\wcA$-MZV of Mordell--Tornheim type} under the refined Kaneko--Zagier conjecture.
We prove some formulas on them by applying the theory developed in \S\ref{sec3}.

There are some recent works related with the contents of our paper.
Hirose, Murahara and the first author \cite{HMO} define the star-version of $\wcS$-MZVs ($\wcS$-MZSVs) 
and prove cyclic sum formulas for $\wcS$-MZ(S)Vs.
The first and second authors and Sakurada \cite{OSS} calculate some special values and prove sum formulas 
and the Bowman--Bradley type theorem for $\wcS$-MZ(S)Vs modulo $t^n$ ($n=2,3$).
Komori \cite{Ko} defines and studies the unified multiple zeta functions which interpolate $\wcS$-MZVs.
Bachmann, Takeyama and Tasaka \cite{BTT} defines and studies the symmetric Mordell--Tornheim 
multiple zeta values in a way different from ours.
%%%%%%%%%%%%%%%%%%%%%%%%%%%%%%%%%%%%%%%%%%%%%%%%%%
%Acknowledgments
%%%%%%%%%%%%%%%%%%%%%%%%%%%%%%%%%%%%%%%%%%%%%%%%%%
\subsection*{Acknowledgments}
The authors would like to thank Dr.~Minoru Hirose for communicating his idea on the definition of $\wcS$-MZV. 
They also would like to thank Dr.~Yuta Suzuki for helpful comments and 
useful discussion on Proposition~\ref{prop:GYT}. 
They would like to express their sincere gratitude to Prof.~Koji Tasaka 
for informing us about Jarossay's work on the $\Lambda$-adjoint multiple zeta values, 
and to Dr.~David Jarossay for explaining in detail the relationship of his work with ours. 
%%%%%%%%%%%%%%%%%%%%%%%%%%%%%%%%%%%%%%%%%%%%%%%%%%
%%%%%%%%%%%%%%%%%%%%%%%%%%%%%%%%%%%%%%%%%%%%%%%%%%
%Section 2
%%%%%%%%%%%%%%%%%%%%%%%%%%%%%%%%%%%%%%%%%%%%%%%%%%
%%%%%%%%%%%%%%%%%%%%%%%%%%%%%%%%%%%%%%%%%%%%%%%%%%
\section{Series expressions of $\wcS$-MZVs}\label{sec:2}
%%%%%%%%%%%%%%%%%%%%%%%%%%%%%%%%%%%%%%%%%%%%%%%%%%
%Subsection 2.1
%%%%%%%%%%%%%%%%%%%%%%%%%%%%%%%%%%%%%%%%%%%%%%%%%%
\subsection{Preliminaries}\label{subsec:Preliminaries}
Let $\frH\coloneqq\bQ\langle x,y \rangle$ be the non-commutative polynomial ring 
over $\bQ$ with two variables $x$ and $y$.
We define two $\bQ$-subalgebras $\frH^0$ and $\frH^1$ of $\frH$ by 
$\frH^1\coloneqq\bQ+y\frH \supset \frH^0\coloneqq\bQ+y\frH x$.
Putting $z_k\coloneqq yx^{k-1}$ for any positive integer $k$, 
we see that $\frH^1$ has a basis $\{z_{\bk} \mid \text{$\bk$: index}\}$ 
consisting of monomials $z_{\bk}\coloneqq z_{k_1}\cdots z_{k_r}$ for 
all indices $\bk=(k_1,\dots,k_r)$, including $z_\varnothing\coloneqq 1$. 
Note also that $\frH^0$ is spanned by the subset $\{z_{\bk} \mid \bk\colon \text{admissible index}\}$. 

The \emph{harmonic product} $\ast$ on $\frH^1$ and the \emph{shuffle product} $\sh$ on $\frH$ 
are defined as follows.
First, we define the $\bQ$-bilinear map $\ast\colon\frH^1\times\frH^1\rightarrow\frH^1$ by the following rules:
\begin{enumerate}
\item $w\ast1=1\ast w=w$ for all $w \in \frH^1$,
\item $(w_1z_{k_1})\ast(w_2z_{k_2})=(w_1\ast w_2z_{k_2})z_{k_1}+(w_1z_{k_1}\ast w_2)z_{k_2}+(w_1\ast w_2)z_{k_1+k_2}$ for all $w_1, w_2 \in \frH^1$ and positive integers $k_1, k_2$.
\end{enumerate}
We similarly define the $\bQ$-bilinear map $\sh\colon\frH\times\frH\rightarrow\frH$ by the following rules:
\begin{enumerate}
\item $w \sh1=1\sh w=w$ for all $w \in \frH$,
\item $(w_1u_1)\sh(w_2u_2)=(w_1\sh w_2u_2)u_1+(w_1u_1\sh w_2)u_2$ for all $w_1, w_2 \in \frH$ and $u_1, u_2 \in \{x,y\}$.
\end{enumerate}

It is known that $\frH^1$ (resp.~$\frH$) becomes a commutative $\bQ$-algebra with respect to 
the multiplication $\ast$ (resp.~$\sh$), which is denoted by $\frH^1_{\ast}$ (resp.~$\frH_{\sh}$).
Then, the subspace $\frH^0$ of $\frH^1$ is closed under $\ast$ and becomes a $\bQ$-subalgebra of $\frH^1_{\ast}$.
Similarly, the subspaces $\frH^0$ and $\frH^1$ of $\frH$ are closed under $\sh$ and become $\bQ$-subalgebras of $\frH_{\sh}$.
$\frH^0_{\ast}$, $\frH^0_{\sh}$ and $\frH^1_{\sh}$ denote these subalgebras, respectively.

We define a $\bQ$-linear map $Z\colon\frH^0 \rightarrow \bR$ by $Z(z_{\bk})\coloneqq\zeta(\bk)$. 
Similarly, for a positive integer $M$, we define a $\bQ$-linear map $Z_{M}\colon\frH^1 \rightarrow \bQ$ 
by $Z_M(z_{\bk})\coloneqq\zeta_M^{}(\bk)$. 
Then we have 
\begin{alignat*}{2}
    Z(w_1\ast w_2)&=Z(w_1)Z(w_2)=Z(w_1\sh w_2) &\quad &\text{for any $w_1,w_2\in\frH^0$, and}\\
    Z_M(w_1\ast w_2)&=Z_M(w_1)Z_M(w_2) &\quad &\text{for any $w_1,w_2\in\frH^1$.} 
\end{alignat*}

Let $T$ be an indeterminate.
Since $\frH^1_{\ast}\cong\frH^0_{\ast}[y]$ and $\frH^1_{\sh}\cong\frH^0_{\sh}[y]$ 
(see \cite[Theorem 3.1]{Ho} and \cite[Theorem 6.1]{Re}), we can uniquely extend the map $Z$ 
to $\bQ$-algebra homomorphisms $Z^\ast\colon\frH^1_\ast\rightarrow\cZ[T]$ and 
$Z^{\sh}\colon\frH^1_{\sh}\rightarrow\cZ[T]$ satisfying $Z^\ast(y)=Z^{\sh}(y)=T$.
For an index $\bk$ and $\bullet\in\{\ast,\sh\}$, we set $\zeta^\bullet(\bk;T)\coloneqq Z^\bullet(z_{\bk})$ 
and $\zeta^\bullet(\bk)\coloneqq\zeta^\bullet(\bk;0)$. 
We call $\zeta^\ast(\bk)$ (resp. $\zeta^{\sh}(\bk)$) the \emph{harmonic} (resp.~\emph{shuffle}) 
\emph{regularized MZV}.
See \cite{IKZ} for details of the theory of regularization of multiple zeta values.

Now the expression 
\[
\zeta^\bullet_{\wcS}(\bk)=\sum_{i=0}^r(-1)^{\wt(\bk^{[i]})}\zeta^{\bullet}(\bk_{[i]})
\sum_{\bl\in\bZ_{\geq 0}^{r-i}}b\binom{\bk^{[i]}}{\bl}\zeta^{\bullet}(\overline{\bk^{[i]}+\bl})t^{\wt(\bl)}
\]
in Definition \ref{def:t-adic SMZV} makes sense.
To justify Definition~\ref{def:wcS-MZV}, we need the following congruence. 

%%%%%%%%%%%%%%%%%%%%%%%%%%%%%%%%%%%%%%%%%%%%%%%%%%
\begin{proposition}\label{prop:indep-bullet}
For any index $\bk$, we have 
\[
\zeta_{\wcS}^*(\bk)\equiv\zeta_{\wcS}^{\sh}(\bk)\mod{\pi^2\cZ\jump{t}}. 
\]
\end{proposition}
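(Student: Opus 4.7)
My plan is to introduce a two-variable lift of $\zeta^\bullet_\wcS(\bk)$ to $\cZ[T,T']\jump{t}$, exploit a symmetry that forces it to be a polynomial in $T-T'$, and then apply the Ihara--Kaneko--Zagier (IKZ) regularization theorem together with a basic $\pi^2$-congruence for the associated generating function.

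Concretely, I would set
\[
\zeta^\bullet_\wcS(\bk;T,T'):=\sum_{i=0}^r(-1)^{\wt(\bk^{[i]})}\zeta^\bullet(\bk_{[i]};T)\sum_{\bl\in\bZ_{\ge 0}^{r-i}}b\binom{\bk^{[i]}}{\bl}\zeta^\bullet(\overline{\bk^{[i]}+\bl};T')\,t^{\wt(\bl)},
\]
which reduces to $\zeta^\bullet_\wcS(\bk)$ at $T=T'=0$. The key structural lemma is that $(\partial_T+\partial_{T'})\zeta^\bullet_\wcS(\bk;T,T')=0$, so that $\zeta^\bullet_\wcS(\bk;T,T')$ actually lies in $\cZ\jump{t}[T-T']$. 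This rests on the derivative formula
\[
\partial_T\zeta^\bullet(\boldsymbol{m};T)=\begin{cases}\zeta^\bullet(\boldsymbol{m}_{[\dep(\boldsymbol{m})-1]};T)&\text{if the last entry of }\boldsymbol{m}\text{ is }1,\\ 0&\text{otherwise,}\end{cases}
\]
which follows from the polynomial-algebra isomorphisms $\frH^1_\ast\cong\frH^0_\ast[y]$ and $\frH^1_{\sh}\cong\frH^0_{\sh}[y]$ together with $Z^\bullet(y)=T$. Applying this, $\partial_T$ contributes a sum over $\{i:k_i=1\}$ while $\partial_{T'}$ contributes a sum over $\{i:k_{i+1}=1\}$ (the condition $l_1=0$ emerges automatically when differentiating the inner series); after reindexing $i\mapsto i+1$ in the first sum and using the identity $(-1)^{\wt(\bk^{[i+1]})}=-(-1)^{\wt(\bk^{[i]})}$ (valid when $k_{i+1}=1$), the two sums cancel term by term.

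By the IKZ theorem (see \cite{IKZ}), there is a unique $\bQ$-linear map $\rho\colon\cZ[T]\to\cZ[T]$ fixing $\cZ$ with $Z^{\sh}=\rho\circ Z^\ast$ on $\frH^1$, characterized by
\[
\sum_{n\ge 0}\rho(T^n)\frac{u^n}{n!}=A(u)\,e^{Tu},\qquad A(u):=\exp\!\left(\sum_{n\ge 2}\frac{(-1)^n\zeta(n)}{n}u^n\right).
\]
Applying $\rho$ coordinatewise gives $\zeta^{\sh}_\wcS(\bk;T,T')=(\rho_T\otimes\rho_{T'})\bigl(\zeta^\ast_\wcS(\bk;T,T')\bigr)$, and a direct expansion yields
\[
\bigl.(\rho_T\otimes\rho_{T'})\bigl((T-T')^k\bigr)\bigr|_{T=T'=0}=k!\,[u^k]\bigl(A(u)A(-u)\bigr).
\]
Since $\zeta(2k)\in\pi^2\cZ$ for every $k\ge 1$, modulo $\pi^2$ the series $A(u)$ becomes the exponential of an odd power series in $u$, and therefore $A(u)A(-u)\equiv 1\pmod{\pi^2\cZ\jump{u}}$. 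Writing $\zeta^\ast_\wcS(\bk;T,T')=\sum_{k\ge 0}c_k(t)(T-T')^k$, this forces
\[
\zeta^{\sh}_\wcS(\bk)=\sum_{k\ge 0}c_k(t)\cdot k!\,[u^k]A(u)A(-u)\equiv c_0(t)=\zeta^\ast_\wcS(\bk)\pmod{\pi^2\cZ\jump{t}}.
\]

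The main obstacle I expect is the structural lemma $(\partial_T+\partial_{T'})\zeta^\bullet_\wcS(\bk;T,T')=0$: the telescoping itself is combinatorially transparent, but it depends on the derivative formula above, whose verification requires that $z_\bk$ with $k_r=1$ really does have $y$-derivative equal to $z_{\bk_{[r-1]}}$ inside the polynomial decomposition $\frH^1_\bullet\cong\frH^0_\bullet[y]$. The subsequent IKZ step and the identity $A(u)A(-u)\equiv 1\pmod{\pi^2}$ are then essentially generating-function calculations.
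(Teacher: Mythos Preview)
Your proof is correct, and it shares its endgame with the paper's argument: both ultimately reduce to the IKZ regularization map $\rho$ together with the congruence $A(u)A(-u)=e^{\gamma u}\Gamma(1+u)\cdot e^{-\gamma u}\Gamma(1-u)=\pi u/\sin(\pi u)\equiv 1\pmod{\pi^2\cZ\jump{u}}$. The reductions to that point, however, are organized differently. The paper first rewrites $\zeta^\bullet_{\wcS}(\bk)$ so that each $t$-coefficient of the difference $\zeta^*_{\wcS}(\bk)-\zeta^{\sh}_{\wcS}(\bk)$ becomes a $\bZ$-combination of alternating sums $\sum_{j=0}^s(-1)^{s-j}\bigl[\zeta^*(\bh,1^j)\zeta^*(\bh',1^{s-j})-\zeta^{\sh}(\bh,1^j)\zeta^{\sh}(\bh',1^{s-j})\bigr]$ with $\bh,\bh'$ admissible, packages these into the products $f^\bullet(\bh,0,x)f^\bullet(\bh',0,-x)$ of one-variable generating functions, and invokes the IKZ identity $f^\bullet(\bh,T,x)=e^{Tx}f^\bullet(\bh,0,x)$. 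Your two-variable lift $\zeta^\bullet_{\wcS}(\bk;T,T')$ together with the lemma $(\partial_T+\partial_{T'})\zeta^\bullet_{\wcS}(\bk;T,T')=0$ achieves the same reduction in one stroke: the dependence on $T-T'$ alone is exactly what makes the tensor $\rho_T\otimes\rho_{T'}$ collapse to $A(u)A(-u)$ at $T=T'=0$. The cost of your route is that you must verify the derivative formula, which amounts to checking that the ``drop the trailing $y$'' map is a $\bullet$-derivation on $\frH^1_\bullet$ vanishing on $\frH^0$ and sending $y\mapsto 1$; this is straightforward from the recursive definitions of $\ast$ and $\sh$. Your packaging is somewhat more conceptual and avoids isolating the admissible prefixes $\bh,\bh'$ by hand; the paper's is more concrete and ties directly into the classical generating functions of~\cite{IKZ}.
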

%%%%%%%%%%%%%%%%%%%%%%%%%%%%%%%%%%%%%%%%%%%%%%%%%%
\begin{proof}
First we note that, for $\bullet\in\{*,\sh\}$, 
the definition of $\zeta_{\wcS}^\bullet$ can be rewritten as 
\[
\zeta_{\wcS}^\bullet(\bk)
=\sum_{\bl=(l_1,\ldots,l_r)\in\bZ_{\geq 0}^r} b\binom{\bk}{\bl}t^{\wt(\bl)}
\sum_{\substack{0\leq i\leq r\\\text{ s.t. } \bl_{[i]}=(0,\ldots,0)}}(-1)^{\wt(\bk^{[i]})}
\zeta^\bullet(\bk_{[i]})\zeta^\bullet(\overline{\bk^{[i]}+\bl^{[i]}}). 
\]
From this expression, we see that the coefficient of each $t^n$ in the difference 
$\zeta_{\wcS}^*(\bk)-\zeta_{\wcS}^{\sh}(\bk)$ is a $\bZ$-linear combination 
of terms of the form 
\begin{equation}\label{eq:indep-bullet claim}
\sum_{j=0}^s(-1)^{s-j}\Bigl\{
\zeta^*(\bh,\underbrace{1,\ldots,1}_{j})\zeta^*(\bh',\underbrace{1,\ldots,1}_{s-j})
-\zeta^{\sh}(\bh,\underbrace{1,\ldots,1}_{j})\zeta^{\sh}(\bh',\underbrace{1,\ldots,1}_{s-j})
\Bigr\}
\end{equation}
for some admissible indices $\bh$, $\bh'$ and integer $s\geq 0$. 
To prove that such terms are contained in $\pi^2\cZ$, we consider the generating functions 
\[
f^\bullet(\bh,T,x)\coloneqq \sum_{s=0}^\infty \zeta^\bullet(\bh,\underbrace{1,\ldots,1}_s;T)x^s
\]
for any admissible index $\bh$ and $\bullet\in\{*,\sh\}$. 
Then it suffices to show that 
\[
f^*(\bh,0,x)f^*(\bh',0,-x)-f^{\sh}(\bh,0,x)f^{\sh}(\bh',0,-x)\in \pi^2\cZ\jump{x}, 
\]
since the expression \eqref{eq:indep-bullet claim} is the coefficient of $x^s$ 
in this power series. 

By \cite[Theorem 1]{IKZ} and \cite[Proposition 10]{IKZ}, we see that 
\[
f^*(\bh,T,x)=\rho^{-1}\bigl(f^{\sh}(\bh,T,x)\bigr)
=\rho^{-1}\bigl(e^{Tx}f^{\sh}(\bh,0,x)\bigr)
=e^{-\gamma x}\Gamma(1+x)^{-1}e^{Tx}f^{\sh}(\bh,0,x),
\]
where $\rho$ is an $\bR$-linear endomorphism of $\bR[T]$ determined by 
$\rho(e^{Tx})=e^{\gamma x}\Gamma(1+x)e^{Tx}$ (see \cite[(2.2)]{IKZ}). 
Thus we obtain 
\[
f^*(\bh,0,x)f^*(\bh',0,-x)
=\Gamma(1+x)^{-1}\Gamma(1-x)^{-1}f^{\sh}(\bh,0,x)f^{\sh}(\bh',0,-x), 
\]
and the proof is complete since we have 
\[
\Gamma(1+x)^{-1}\Gamma(1-x)^{-1}=\frac{\sin \pi x}{\pi x}\equiv 1 \mod \pi^2\cZ\jump{x}, 
\]
as is well-known. 
\end{proof}

\begin{remark}
This Proposition is indeed a particular case of \cite[Proposition~3.2.4]{J5}. 
Our proof is also essentially the same as Jarossay's, but we write it down here for the convenience of the reader. 
\end{remark}

%%%%%%%%%%%%%%%%%%%%%%%%%%%%%%%%%%%%%%%%%%%%%%%%%%
%Subsection 2.2
%%%%%%%%%%%%%%%%%%%%%%%%%%%%%%%%%%%%%%%%%%%%%%%%%%
\subsection{$\ast$- and $\sh$-truncated $\wcS$-MZVs and their limits}\label{subsec:lim}
In this subsection, we prove Theorem \ref{main1} (= Theorem \ref{lim}), which is our first main result. 
Recall the definitions of $\ast$- and $\sh$-truncated $\wcS$-MZVs (Definition~\ref{def:truncated-sums}):
\begin{align*}
&\zeta^{\ast}_{\wcS, M}(\bk)
=\sum_{i=0}^r\sum_{\substack{0<n_1<\cdots<n_i<M \\ -M<n_{i+1}<\cdots<n_r<0}}
\frac{1}{n^{k_1}_1\cdots n^{k_i}_i(n_{i+1}+t)^{k_{i+1}}\cdots (n_r+t)^{k_r}} \in \bQ\jump{t},\\
&\zeta^{\sh}_{\wcS, M}(\bk)
=\sum_{i=0}^r\sum_{\substack{0<n_1<\cdots<n_i \\ n_{i+1}<\cdots<n_r<0 \\ n_i-n_{i+1}<M}}
\frac{1}{n^{k_1}_1\cdots n^{k_i}_i(n_{i+1}+t)^{k_{i+1}}\cdots (n_r+t)^{k_r}} \in \bQ\jump{t}.
\end{align*}

Here and in what follows, the letter $M$ denotes an arbitrary positive integer unless otherwise noted. 

%%%%%%%%%%%%%%%%%%%%%%%%%%%%%%%%%%%%%%%%%%%%%%%%%%
\begin{definition}\label{w}
For an index $\bk=(k_1, \ldots, k_r)$, a non-negative integer $n$ and $\bullet \in \{\ast, \sh\}$, 
we define an element $w^\bullet_{\wcS, n}(\bk)$ of $\frH^1$ by
\[
w^{\bullet}_{\wcS, n}(\bk)\coloneqq\sum_{i=0}^r(-1)^{\wt(\bk^{[i]})}z_{\bk_{[i]}}\bullet 
\sum_{\bl\in\bZ_{\geq 0}^{r-i},\wt(\bl)=n}b\binom{\bk^{[i]}}{\bl}z_{\overline{\bk^{[i]}+\bl}}.
\]
\end{definition}
%%%%%%%%%%%%%%%%%%%%%%%%%%%%%%%%%%%%%%%%%%%%%%%%%%
%%%%%%%%%%%%%%%%%%%%%%%%%%%%%%%%%%%%%%%%%%%%%%%%%%
\begin{proposition}\label{w in H0}
The above $w^{\bullet}_{\wcS, n}(\bk)$ is always an element of $\frH^0$.
\end{proposition}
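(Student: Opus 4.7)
My plan is to recast the proposition as a single generating-function statement. Introduce
\[
W^\bullet_{\wcS}(\bk; t) \coloneqq \sum_{n \geq 0} w^\bullet_{\wcS, n}(\bk)\, t^n \in \frH^1\jump{t}.
\]
Swapping the order of summation over $n$ and $\bl$, one has
\[
W^\bullet_{\wcS}(\bk;t) = \sum_{i=0}^r (-1)^{\wt(\bk^{[i]})}\, z_{\bk_{[i]}} \bullet \beta_t\bigl(z_{\overline{\bk^{[i]}}}\bigr),
\]
where $\beta_t\colon \frH^1 \to \frH^1\jump{t}$ is the $\bQ\jump{t}$-linear operator sending $z_{\bk'}$ to $\sum_{\bl} b\binom{\bk'}{\bl}\, z_{\bk' + \bl}\, t^{\wt(\bl)}$; equivalently, $\beta_t$ is the concatenation-algebra homomorphism on $\frH^1$ determined by $y x^{k-1} \mapsto y x^{k-1}(1-tx)^{-k}$. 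It then suffices to prove $W^\bullet_{\wcS}(\bk;t) \in \frH^0\jump{t}$, which yields the proposition coefficient-wise in $t$.

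I would then exploit the algebra isomorphism $\frH^1_\bullet \cong \frH^0_\bullet[y]$ and the induced $\bullet$-algebra retraction $\mathrm{reg}^\bullet\colon \frH^1_\bullet \twoheadrightarrow \frH^0_\bullet$ sending $y$ to $0$. Since $\frH^0$ is exactly the $\mathrm{reg}^\bullet$-fixed locus, it suffices to show $\mathrm{reg}^\bullet(W^\bullet_{\wcS}(\bk;t)) = W^\bullet_{\wcS}(\bk;t)$. I would prove this by induction on the depth $r = \dep(\bk)$, the base $r = 0$ being immediate. For the inductive step, I would isolate the extreme terms ($i = 0$ and $i = r$) and use multiplicativity of $\beta_t$ under concatenation—namely $\beta_t(z_{\overline{\bk^{[i-1]}}}) = \beta_t(z_{\overline{\bk^{[i]}}}) \cdot y x^{k_i-1}(1-tx)^{-k_i}$—together with the standard product expansion of $z_{\bk_{[i-1]}} \bullet z_{k_i}$, to rewrite the interior sum in a form where the inductive hypothesis applies to $W^\bullet_{\wcS}(\bk';t)$ for a shorter index $\bk'$, up to a boundary correction of the form $z_\bk \pm \beta_t(z_{\overline{\bk}})$ whose membership in $\frH^0\jump{t}$ can be checked directly by manipulating the factors $(1-tx)^{-k}$.

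The principal obstacle is that for a generic index $\bk$, each individual summand $z_{\bk_{[i]}} \bullet \beta_t(z_{\overline{\bk^{[i]}}})$ lies outside $\frH^0\jump{t}$; the proposition is purely a statement about cancellation of the non-admissible (``$y$-trailing'') parts under the alternating sign $(-1)^{\wt(\bk^{[i]})}$. Making this cancellation structurally transparent, and doing so uniformly for both $\bullet = \ast$ and $\bullet = \sh$, is the crux of the argument. The shuffle case is potentially more delicate because the reversal $\bk \mapsto \overline{\bk}$ appearing in the definition is not the shuffle antipode on $\frH_\sh$ (which reverses whole letter-words in $\{x,y\}$ and incorporates the sign $(-1)^{\mathrm{length}}$), so a direct appeal to the antipode axiom does not suffice, and an additional combinatorial identity tying the index-reversal to the word-reversal is required.
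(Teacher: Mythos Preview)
Your generating-function reformulation and the operator $\beta_t$ are correct and useful bookkeeping, but what you have submitted is a plan, not a proof: the inductive step is never carried out, and your own closing paragraph concedes that the ``crux of the argument'' is still open. Concretely, the claimed boundary correction ``of the form $z_{\bk}\pm\beta_t(z_{\overline{\bk}})$ whose membership in $\frH^0\jump{t}$ can be checked directly'' is false as stated: if $k_r=1$ then $z_{\bk}\notin\frH^0$, and if $k_1=1$ then $\beta_t(z_{\overline{\bk}})$ has constant term $z_{\overline{\bk}}\notin\frH^0$, and there is no sign choice making their combination admissible. The non-admissible pieces are cancelled not by the extreme terms but between \emph{adjacent} summands $i$ and $i\pm 1$, and your sketch never makes this mechanism precise. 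Moreover, the relation you would need to reduce to a shorter $\bk'$ involves expressions like $z_{\bk_{[i]}}\bullet\bigl(\beta_t(z_{k_r})\cdot\beta_t(z_{\overline{\bk'^{[i]}}})\bigr)$, and neither $\ast$ nor $\sh$ interacts simply with right-concatenation by $\beta_t(z_{k_r})$, so ``the standard product expansion of $z_{\bk_{[i-1]}}\bullet z_{k_i}$'' does not obviously apply.

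The paper proceeds by a direct cancellation rather than induction on depth. For $\bullet=\ast$, one applies a single step of the harmonic recursion to each $z_{\bk_{[i]}}\ast z_{\overline{\bk^{[i]}+\bl}}$ and observes that, modulo $\frH^0$, the result is $E_i+E_{i+1}$ for an explicit term $E_i$ ending in $z_{k_i}$; the alternating sign then collapses the sum to $\sum_i(-1)^{\wt(\bk^{[i]})}(1+(-1)^{k_i})E_i$, which vanishes modulo $\frH^0$ because either $k_i\geq 2$ (so $E_i\in\frH^0$) or $k_i=1$ (so the coefficient is zero). For $\bullet=\sh$, one first rewrites the $\bl$-sum as $y\bigl(x^{k_r-1}z_{k_{r-1}}\cdots z_{k_{i+1}}\sh x^n\bigr)$ and then splits each shuffle $z_{\bk_{[i]}}\sh y(\cdots)$ according to which of the two rightmost $y$'s lands farther right; the resulting pieces $u_i,v_i$ again pair off between adjacent $i$ with the same parity dichotomy on $k_i$. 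If you want to salvage your approach, you should aim for exactly this adjacent-term telescoping rather than an induction that isolates $i=0$ and $i=r$.
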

%%%%%%%%%%%%%%%%%%%%%%%%%%%%%%%%%%%%%%%%%%%%%%%%%%
%%%%%%%%%%%%%%%%%%%%%%%%%%%%%%%%%%%%%%%%%%%%%%%%%%
\begin{proof}
First we consider the case of $\bullet=*$. 
For $0<i<r$ and $\bl=(l_{i+1},\ldots,l_r)\in\bZ_{\geq 0}^{r-i}$, we have 
\[z_{\bk_{[i]}}*z_{\overline{\bk^{[i]}+\bl}}
=\bigl(z_{\bk_{[i-1]}}\ast z_{\overline{\bk^{[i]}+\bl}}\bigr)z_{k_i}
+\bigl(z_{\bk_{[i]}}\ast z_{\overline{\bk^{[i+1]}+\bl^{[1]}}}\bigr)z_{k_{i+1}+l_{i+1}}, \]
the second term of which belongs to $\frH^0$ whenever $l_{i+1}>0$. 
This implies a congruence 
\[z_{\bk_{[i]}}*\sum_{\bl\in\bZ_{\geq 0}^{r-i},\wt(\bl)=n}b\binom{\bk^{[i]}}{\bl}
z_{\overline{\bk^{[i]}+\bl}}\equiv E_i+E_{i+1}\mod \frH^0, \]
where 
\[E_i\coloneqq
\sum_{\bl\in\bZ_{\geq 0}^{r-i},\wt(\bl)=n} b\binom{\bk^{[i]}}{\bl}
\bigl(z_{\bk_{[i-1]}}\ast z_{\overline{\bk^{[i]}+\bl}}\bigr)z_{k_i}. \]
This also holds for $i=0$ and $i=r$ if we put $E_0=E_{r+1}\coloneqq 0$. Hence we have 
\[w^{*}_{\wcS, n}(\bk)\equiv \sum_{i=0}^r(-1)^{\wt(\bk^{[i]})}(E_i+E_{i+1})
=\sum_{i=1}^r(-1)^{\wt(\bk^{[i]})}(1+(-1)^{k_i})E_i. \]
Note that $E_i\in \frH^0$ if $k_i\geq 2$, while $1+(-1)^{k_i}=0$ if $k_i=1$. 
This completes the proof for $\bullet=*$. 

Next we treat the case of $\bullet=\sh$. First note that the sum 
\[\sum_{\bl\in\bZ_{\geq 0}^{r-i},\wt(\bl)=n}b\binom{\bk^{[i]}}{\bl}z_{\overline{\bk^{[i]}+\bl}} \]
is obtained by expanding $y(x^{k_r-1}z_{k_{r-1}}\cdots z_{k_{i+1}}\sh x^n)$ 
to a sum of monomials. Thus we can write $w^{\sh}_{\wcS, n}(\bk)$ as 
\begin{equation}
w^{\sh}_{\wcS, n}(\bk)=\sum_{i=0}^{r}(-1)^{\wt(\bk^{[i]})}z_{\bk_{[i]}}
\sh y(x^{k_r-1}z_{k_{r-1}}\cdots z_{k_{i+1}}\sh x^n)
=\sum_{i=0}^{r}(-1)^{\wt(\bk^{[i]})}(u_i+v_i). 
\end{equation}
Here we put 
\[u_0\coloneqq y(x^{k_r-1}z_{k_{r-1}}\cdots z_{k_1}\sh x^n), \quad 
v_0\coloneqq 0, \quad u_r\coloneqq 0,\quad 
v_r\coloneqq\begin{cases}
z_{\bk} & (n=0), \\
0 & (n>0), 
\end{cases}\]
and define $u_i$ (resp.~$v_i$) ($0<i<r$) to be the partial sum of 
the expansion of 
$z_{\bk_{[i]}}\sh y(x^{k_r-1}z_{k_{r-1}}\cdots z_{k_{i+1}}\sh x^n)$ 
consisting of monomials in which 
the rightmost $y$ in $z_{\bk_{[i]}}$ lies to the left (resp.~the right) of 
the rightmost $y$ in $y(x^{k_r-1}z_{k_{r-1}}\cdots z_{k_{i+1}}\sh x^n)$. 
Then we can check that
\begin{itemize}
\item $u_{i-1}, v_i \in \frH^0$ if  $k_i\geq2$,
\item $u_{i-1}\equiv v_i \pmod{\frH^0}$ if $k_i=1$
\end{itemize}
for $i=1,\ldots,r$. 
Thus we obtain 
\[
w^{\sh}_{\wcS, n}(\bk)=\sum_{i=1}^{r}(-1)^{\wt(\bk^{[i]})}
\bigl((-1)^{k_i}u_{i-1}+v_i\bigr)\equiv 0\pmod{\frH^0}.\qedhere
\]
\end{proof}
We remark that Komori \cite{Ko} also proved Proposition~\ref{w in H0} for $\bullet=*$, independently. 

\begin{lemma}\label{lem:partial fraction shuffle}
For any indices $\bk=(k_1,\ldots,k_r)$ and $\bl=(l_1,\ldots,l_s)$, we have 
\[
\sum_{\substack{0=m_0<m_1<\cdots<m_r\\ 0=n_0<n_1<\cdots<n_s\\ m_r+n_s<M}}
\frac{1}{m_1^{k_1}\cdots m_r^{k_r}n_1^{l_1}\cdots n_s^{l_s}}=Z_M(z_{\bk}\sh z_{\bl}). 
\]
\end{lemma}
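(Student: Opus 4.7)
The plan is to proceed by induction on $r + s = \dep(\bk) + \dep(\bl)$. When $r = 0$ or $s = 0$, the identity is immediate: if $\bl = \varnothing$, then $z_\bl = 1$ and $z_\bk \sh z_\bl = z_\bk$, while on the LHS the $n$-summation is trivial and the constraint $m_r + n_s < M$ reduces to $m_r < M$, so both sides equal $\zeta_M(\bk)$.

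For the inductive step with $r, s \geq 1$, I would first isolate the top variables $m = m_r$ and $n = n_s$ to write
\[
\text{LHS} = \sum_{\substack{m, n \geq 1 \\ m + n < M}} \frac{\zeta_m(\bk'') \, \zeta_n(\bl'')}{m^{k_r} \, n^{l_s}},
\]
where $\bk'' = (k_1, \ldots, k_{r-1})$ and $\bl'' = (l_1, \ldots, l_{s-1})$, with $\zeta_m(\varnothing) \coloneqq 1$. I would then apply the partial fraction decomposition
\[
\frac{1}{m^{k_r} n^{l_s}} = \sum_{a=1}^{k_r} \binom{k_r+l_s-a-1}{l_s-1} \frac{1}{m^a (m+n)^{k_r+l_s-a}} + \sum_{a=1}^{l_s} \binom{k_r+l_s-a-1}{k_r-1} \frac{1}{n^a (m+n)^{k_r+l_s-a}},
\]
set $P = m + n$, and observe that the combined sum from each first-type term has the LHS-shape for the pair $\bigl((k_1, \ldots, k_{r-1}, a), \bl''\bigr)$ of total depth $r + s - 1$, truncated at $P$. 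The inductive hypothesis identifies it with $Z_P\bigl((z_{\bk''} z_a) \sh z_{\bl''}\bigr)$, and the elementary outer identity $\sum_{P < M} \zeta_P(\bh)/P^b = \zeta_M(\bh, b)$ (extended linearly) attaches the final block, yielding $Z_M\bigl(\bigl((z_{\bk''} z_a) \sh z_{\bl''}\bigr) z_{k_r+l_s-a}\bigr)$; the second-type terms are handled symmetrically.

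To conclude, it remains to establish the algebraic identity in $\frH^1$:
\[
(w_1 z_{k_r}) \sh (w_2 z_{l_s}) = \sum_{a=1}^{k_r} \binom{k_r+l_s-a-1}{l_s-1} \bigl((w_1 z_a) \sh w_2\bigr) z_{k_r+l_s-a} + \sum_{a=1}^{l_s} \binom{k_r+l_s-a-1}{k_r-1} \bigl(w_1 \sh (w_2 z_a)\bigr) z_{k_r+l_s-a}.
\]
I would verify this combinatorially by case-splitting on whether the last $y$ of each word in the shuffle comes from $z_{k_r}$ or from $z_{l_s}$: if it comes from $z_{k_r}$, the trailing $x$'s form an interleaving of the $k_r - 1$ $x$'s from $z_{k_r}$ with some $c \in \{0, \ldots, l_s-1\}$ of the $x$'s from $z_{l_s}$, producing a final block of length $k_r + c$ with multiplicity $\binom{k_r+c-1}{c}$. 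Applying $Z_M$ to this identity and invoking the expression derived above matches the partial-fraction expansion of the LHS with $Z_M(z_\bk \sh z_\bl)$, completing the induction. The main obstacle will be this combinatorial bookkeeping: verifying that the binomials produced by the partial fractions agree, after reindexing via $\binom{n}{k} = \binom{n}{n-k}$, with those arising from counting interleavings of trailing $x$'s in the shuffle.
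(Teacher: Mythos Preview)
Your proof is correct. The induction on $r+s$ via the full partial-fraction decomposition of $1/(m^{k_r}n^{l_s})$ works, and the algebraic shuffle identity you state for $(w_1 z_{k_r})\sh(w_2 z_{l_s})$ is valid: splitting on which factor contributes the last $y$ and counting the interleavings of trailing $x$'s gives precisely those binomial coefficients after the reindexing $\binom{k_r+l_s-a-1}{l_s-a}=\binom{k_r+l_s-a-1}{k_r-1}$.

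The paper takes a different route. It changes variables to the increments $p_i$ (so that $m_j=p_1+\cdots+p_j$, etc.) and then applies a known identity of Goncharov and Yamamoto,
\[
\frac{1}{p_1(p_1+p_2)\cdots(p_1+\cdots+p_k)\cdot p_{k+1}\cdots(p_{k+1}+\cdots+p_{k+l})}
=\sum_{\sigma\in\Sigma_{k,l}}\frac{1}{p_{\sigma^{-1}(1)}(p_{\sigma^{-1}(1)}+p_{\sigma^{-1}(2)})\cdots(p_{\sigma^{-1}(1)}+\cdots+p_{\sigma^{-1}(k+l)})},
\]
summing afterwards over the common index set $P^M_{\bk,\bl}$. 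This avoids any induction and any auxiliary shuffle identity in $\frH^1$, at the cost of invoking an external lemma. The paper also mentions in passing that one can obtain the result by iterating the elementary decomposition $\tfrac{1}{ab}=\tfrac{1}{a(a+b)}+\tfrac{1}{b(a+b)}$; your argument is a refinement of that alternative, performing the whole splitting of $1/(m^{k_r}n^{l_s})$ at once so that the induction is on depth rather than weight. Your approach is self-contained but requires the combinatorial bookkeeping you flagged; the paper's approach is shorter once the cited identity is granted.
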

\begin{proof}
This result is a kind of folklore, and can be proven in several ways.
For example, this is a consequence of the shuffle relation of multiple polylogarithms,
as shown in \cite[Theorem~8.1]{Kan} and \cite[Proposition~3.4.3]{J5} for prime or prime power $M$. 
Alternatively, we can show the Lemma by applying the partial fraction decomposition 
\[
\frac{1}{ab}=\frac{1}{a(a+b)}+\frac{1}{b(a+b)}
\]
repeatedly; see \cite{KMT} or \cite{O} for a similar argument. 

Here we give a proof which uses an identity in \cite[Lemma~2.12]{Gon} or \cite[Lemma 2.1]{Yam}. 
Set $k\coloneqq k_1+\cdots+k_r$ and $l\coloneqq l_1+\cdots+l_s$, and 
\[
\Sigma_{k,l}\coloneqq \bigl\{\sigma\in\mathfrak{S}_{k+l}\bigm| \sigma(1)<\cdots<\sigma(k),\,
\sigma(k+1)<\cdots<\sigma(k+l)\bigr\}, 
\]
where $\mathfrak{S}_{k+l}$ denotes the group of permutations on the set $\{1,\ldots,k+l\}$. 
If we write $z_{\bk}=u_1\cdots u_k$ and $z_{\bl}=u_{k+1}\cdots u_{k+l}$ with $u_i\in\{x,y\}$, 
then we have 
\[
z_{\bk}\sh z_{\bl}=\sum_{\sigma\in\Sigma_{k,l}}u_{\sigma^{-1}(1)}\cdots u_{\sigma^{-1}(k+l)}.
\]
Moreover, we consider a set $P_{\bk,\bl}^M$ of tuples $(p_1,\ldots,p_{k+l})$ of non-negative integers 
such that $p_1+\cdots+p_{k+l}<M$ and $p_i>0$ if and only if $u_i=y$. Then we see that 
\begin{multline*}
\sum_{\substack{0=m_0<m_1<\cdots<m_r\\ 0=n_0<n_1<\cdots<n_s\\ m_r+n_s<M}}
\frac{1}{m_1^{k_1}\cdots m_r^{k_r}n_1^{l_1}\cdots n_s^{l_s}}\\
=\sum_{(p_1,\ldots,p_{k+l})\in P_{\bk,\bl}^M}\frac{1}{p_1(p_1+p_2)\cdots(p_1+\cdots+p_k)\cdot 
p_{k+1}(p_{k+1}+p_{k+2})\cdots (p_{k+1}+\cdots+p_{k+l})}
\end{multline*}
and 
\begin{multline*}
Z_M(u_{\sigma^{-1}(1)}\cdots u_{\sigma^{-1}(k+l)})\\
=\sum_{(p_1,\ldots,p_{k+l})\in P_{\bk,\bl}^M}
\frac{1}{p_{\sigma^{-1}(1)}(p_{\sigma^{-1}(1)}+p_{\sigma^{-1}(2)})\cdots
(p_{\sigma^{-1}(1)}+\cdots+p_{\sigma^{-1}(k+l)})}. 
\end{multline*}
Thus our claim follows from the identity 
\begin{multline*}
\frac{1}{p_1(p_1+p_2)\cdots(p_1+\cdots+p_k)\cdot 
p_{k+1}(p_{k+1}+p_{k+2})\cdots (p_{k+1}+\cdots+p_{k+l})}\\
=\sum_{\sigma\in\Sigma_{k,l}}
\frac{1}{p_{\sigma^{-1}(1)}(p_{\sigma^{-1}(1)}+p_{\sigma^{-1}(2)})\cdots
(p_{\sigma^{-1}(1)}+\cdots+p_{\sigma^{-1}(k+l)})}, 
\end{multline*}
which is \cite[Lemma~2.12]{Gon} (where $\sigma$ should be $\sigma^{-1}$), 
and is a special case of \cite[Lemma 2.1]{Yam}. 
\end{proof}

%%%%%%%%%%%%%%%%%%%%%%%%%%%%%%%%%%%%%%%%%%%%%%%%%%
\begin{theorem}[= Theorem \ref{main1}]\label{lim}
For any index $\bk$ and $\bullet\in\{*,\sh\}$, we have 
\[
\zeta^\bullet_{\wcS}(\bk)=\lim_{M\to\infty}\zeta^\bullet_{\wcS,M}(\bk), 
\]
where the limit is taken coefficientwise as the power series in $t$. 
\end{theorem}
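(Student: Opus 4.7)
The plan is, for each $n\geq 0$, to identify the coefficient of $t^n$ in $\zeta^\bullet_{\wcS,M}(\bk)$ with $Z_M(w^\bullet_{\wcS,n}(\bk))$, where $w^\bullet_{\wcS,n}(\bk)\in\frH^0$ is the element given in Definition~\ref{w} (the admissibility being Proposition~\ref{w in H0}). The starting input is the elementary expansion
\[
\frac{1}{(-m+t)^k}=(-1)^k\sum_{l\geq 0}\binom{k+l-1}{l}\frac{t^l}{m^{k+l}}\qquad(m\in\bZ_{>0}),
\]
applied to each factor $(n_j+t)^{-k_j}$ with $n_j<0$ (writing $n_j=-m_j$), which turns the range $-M<n_{i+1}<\cdots<n_r<0$ into $0<m_r<\cdots<m_{i+1}<M$. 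In the $\ast$-truncated case, this range is independent of the positive range $0<n_1<\cdots<n_i<M$, so after collecting the binomial factors into $b\binom{\bk^{[i]}}{\bl}$ the inner sums factor as a product of two truncated MZVs; invoking the $\ast$-homomorphism property $Z_M(w_1\ast w_2)=Z_M(w_1)Z_M(w_2)$ then yields the identification for $\bullet=\ast$.

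The $\sh$ case is more subtle, since the constraint $n_i-n_{i+1}<M$ becomes the coupled condition $n_i+m_{i+1}<M$. The key tool here is Lemma~\ref{lem:partial fraction shuffle}, which gives the truncated shuffle identity
\[
\sum_{\substack{0<n_1<\cdots<n_i\\0<m_r<\cdots<m_{i+1}\\n_i+m_{i+1}<M}}\frac{1}{n_1^{k_1}\cdots n_i^{k_i}\,m_{i+1}^{k_{i+1}+l_{i+1}}\cdots m_r^{k_r+l_r}}=Z_M(z_{\bk_{[i]}}\sh z_{\overline{\bk^{[i]}+\bl}}).
\]
Summing over $\bl$ with $\wt(\bl)=n$ and using Definition~\ref{w}, the coefficient of $t^n$ in $\zeta^\sh_{\wcS,M}(\bk)$ is therefore $Z_M(w^\sh_{\wcS,n}(\bk))$.

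Finally, since $w^\bullet_{\wcS,n}(\bk)\in\frH^0$, it is a $\bQ$-linear combination of $z_\bh$ for admissible $\bh$, and $\zeta_M(\bh)\to\zeta(\bh)$ as $M\to\infty$, giving $Z_M(w^\bullet_{\wcS,n}(\bk))\to Z(w^\bullet_{\wcS,n}(\bk))$ coefficient-wise. To match $Z(w^\bullet_{\wcS,n}(\bk))$ with the coefficient of $t^n$ in $\zeta^\bullet_{\wcS}(\bk)$, I would apply the $\bullet$-homomorphism $Z^\bullet\colon\frH^1_\bullet\to\cZ[T]$ to Definition~\ref{w}: the product $z_{\bk_{[i]}}\bullet z_{\overline{\bk^{[i]}+\bl}}$ becomes $\zeta^\bullet(\bk_{[i]};T)\,\zeta^\bullet(\overline{\bk^{[i]}+\bl};T)$, and since $w^\bullet_{\wcS,n}(\bk)\in\frH^0$ the image is independent of $T$, so specialising $T=0$ recovers precisely the $t^n$-coefficient of~\eqref{eq:t-adic SMZV}. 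The main obstacle is the $\sh$ case: the coupled truncation prevents a direct factorisation, and Lemma~\ref{lem:partial fraction shuffle} is the essential input that realises the shuffle product at the level of doubly truncated sums.
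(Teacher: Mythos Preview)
Your proposal is correct and follows essentially the same approach as the paper's own proof: expand each factor $(n_j+t)^{-k_j}$ as a power series in $t$, identify the coefficient of $t^n$ in $\zeta^\bullet_{\wcS,M}(\bk)$ with $Z_M(w^\bullet_{\wcS,n}(\bk))$ (using the $\ast$-homomorphism property of $Z_M$ for $\bullet=\ast$ and Lemma~\ref{lem:partial fraction shuffle} for the coupled truncation when $\bullet=\sh$), invoke Proposition~\ref{w in H0} to pass to the limit, and then use the $\bullet$-homomorphism property of $Z^\bullet$ to recover the defining expression~\eqref{eq:t-adic SMZV}. The paper organizes the argument identically; your identification of Lemma~\ref{lem:partial fraction shuffle} as the essential input for the shuffle case is exactly right.
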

%%%%%%%%%%%%%%%%%%%%%%%%%%%%%%%%%%%%%%%%%%%%%%%%%%
\begin{proof}
First we prove 
\begin{equation}\label{Taylor exp}
\zeta^{\bullet}_{\wcS, M}(\bk)=\sum_{n=0}^\infty Z_{M}\bigl(w^{\bullet}_{\wcS, n}(\bk)\bigr)t^n. 
\end{equation}
For $\bullet=*$, by using 
\[
\frac{1}{(t-n)^k}=(-1)^k\sum_{l=0}^{\infty}\binom{k+l-1}{l}\frac{t^l}{n^{k+l}},
\]
we can expand $\zeta^{*}_{\wcS, M}(\bk)$ as 
\begin{align*}
&\zeta^{*}_{\wcS, M}(\bk)\\
&=\sum_{i=0}^r
\sum_{\substack{0<n_1<\cdots<n_i<M \\ 0<n_r<\cdots<n_{i+1}<M}}
\frac{1}{n^{k_1}_1\cdots n^{k_i}_i(t-n_r)^{k_r}\cdots(t-n_{i+1})^{k_{i+1}}}\\
&=\sum_{i=0}^r(-1)^{\wt(\bk^{[i]})}
\sum_{\bl=(l_{i+1}, \ldots, l_r)\in\bZ_{\geq 0}^{r-i}}
b\binom{\bk^{[i]}}{\bl}
\sum_{\substack{0<n_1<\cdots<n_i<M \\ 0<n_r<\cdots<n_{i+1}<M}}
\frac{t^{l_{i+1}+\cdots+l_r}}{n^{k_1}_1\cdots n^{k_i}_in^{k_r+l_r}_r\cdots n^{k_{i+1}+l_{i+1}}_{i+1}}. 
\end{align*}
Then the formula \eqref{Taylor exp} for $\bullet=*$ follows from 
\[
\sum_{\substack{0<n_1<\cdots<n_i<M \\ 0<n_r<\cdots<n_{i+1}<M}}
\frac{1}{n^{k_1}_1\cdots n^{k_i}_in^{k_r+l_r}_r\cdots n^{k_{i+1}+l_{i+1}}_{i+1}}
=Z_M(z_{\bk_{[i]}})Z_M(z_{\overline{\bk^{[i]}+\bl}})
=Z_M\bigl(z_{\bk_{[i]}}\ast z_{\overline{\bk^{[i]}+\bl}}\bigr)
\]
for each $0 \leq i\leq r$ and $\bl=(l_{i+1}, \ldots, l_r) \in \bZ^{r-i}_{\geq0}$.
Next we consider $\bullet=\sh$. In this case, we have 
\[
\zeta^{\sh}_{\wcS, M}(\bk)=\sum_{i=0}^r(-1)^{\wt(\bk^{[i]})}
\sum_{\bl=(l_{i+1}, \ldots, l_r)\in\bZ_{\geq 0}^{r-i}} b\binom{\bk^{[i]}}{\bl}
\sum_{\substack{0<n_1<\cdots<n_i \\ 0<n_r<\cdots<n_{i+1} \\ n_i+n_{i+1}<M}}
\frac{t^{l_{i+1}+\cdots+l_r}}{n^{k_1}_1\cdots n^{k_i}_in^{k_r+l_r}_r\cdots n^{k_{i+1}+l_{i+1}}_{i+1}}.
\]
For $0\leq i \leq r$, we have 
\[
\sum_{\substack{0<n_1<\cdots<n_i \\ 0<n_r<\cdots<n_{i+1} \\ n_i+n_{i+1}<M}}
\frac{1}{n^{k_1}_1\cdots n^{k_i}_in^{k_r+l_r}_r\cdots n^{k_{i+1}+l_{i+1}}_{i+1}}
=Z_{M}\bigl(z_{\bk_{[i]}} \sh z_{\overline{\bk^{[i]}+\bl}}\bigr)
\]
by Lemma \ref{lem:partial fraction shuffle}. 
This shows the formula \eqref{Taylor exp} for $\bullet=\sh$. 

For any admissible index $\bh$, we have $\lim\limits_{M\rightarrow \infty}Z_M(z_{\bh})=Z(z_{\bh})=Z^{\bullet}(z_{\bh})$. 
Thus we also have 
$\lim\limits_{M\rightarrow \infty}Z_M\bigl(w^{\bullet}_{\wcS, n}(\bk)\bigr)
=Z^{\bullet}\bigl(w^{\bullet}_{\wcS, n}(\bk)\bigr)$, 
since $w^{\bullet}_{\wcS, n}(\bk)\in\frH^0$ by Proposition~\ref{w in H0}. 
Therefore, by \eqref{Taylor exp}, we obtain 
\begin{equation}\label{lim exp of zeta_M}
\lim_{M\rightarrow \infty}\zeta^{\bullet}_{\wcS, M}(\bk)
=\sum_{n=0}^{\infty}Z^{\bullet}\bigl(w^{\bullet}_{\wcS, n}(\bk)\bigr)t^n.
\end{equation}

Finally, by using the fact that $Z^\bullet$ is a $\bQ$-algebra homomorphism with respect to the product $\bullet$, 
we calculate the right hand side of \eqref{lim exp of zeta_M} as
\begin{align*}
\sum_{n=0}^\infty Z^\bullet \bigl(w^\bullet_{\wcS, n}(\bk)\bigr)t^n
&=\sum_{n=0}^\infty Z^\bullet \Biggl(\sum_{i=0}^r(-1)^{\wt(\bk^{[i]})}
z_{\bk_{[i]}}\bullet \sum_{\substack{\bl\in\bZ_{\geq0}^{r-i} \\ \wt(\bl)=n}}
b\binom{\bk^{[i]}}{\bl}z_{\overline{\bk^{[i]}+\bl}}\Biggr)t^n\\
&=\sum_{n=0}^\infty \sum_{i=0}^r(-1)^{\wt(\bk^{[i]})}
Z^\bullet(z_{\bk_{[i]}})\sum_{\substack{\bl\in\bZ_{\geq0}^{r-i} \\ \wt(\bl)=n}}
b\binom{\bk^{[i]}}{\bl}Z^\bullet(z_{\overline{\bk^{[i]}+\bl}})t^n\\
&=\sum_{n=0}^\infty \sum_{i=0}^r(-1)^{\wt(\bk^{[i]})}
\zeta^\bullet(\bk_{[i]})\sum_{\substack{\bl\in\bZ_{\geq0}^{r-i} \\ \wt(\bl)=n}}
b\binom{\bk^{[i]}}{\bl}\zeta^\bullet(\overline{\bk^{[i]}+\bl})t^n. 
\end{align*}
This is exactly the definition of $\zeta^\bullet_{\wcS}(\bk)$ (Definition~\ref{def:t-adic SMZV}). 
Now the proof is complete. 
\end{proof}
%%%%%%%%%%%%%%%%%%%%%%%%%%%%%%%%%%%%%%%%%%%%%%%%%%
By taking the constant term of $\zeta^\bullet_{\wcS}(\bk)$, 
we obtain a series expression of $\zeta^{\bullet}_\cS(\bk)$ as follows.
%%%%%%%%%%%%%%%%%%%%%%%%%%%%%%%%%%%%%%%%%%%%%%%%%%
\begin{corollary}\label{cor:limits}
For an index $\bk$ and $\bullet\in\{\ast,\sh\}$, we have
\[
\lim_{M\rightarrow \infty}\zeta^\bullet_{\cS, M}(\bk)=\zeta^{\bullet}_{\cS}(\bk),
\]
where
\begin{align*}
&\zeta^{\ast}_{\cS, M}(\bk)
\coloneqq\sum_{i=0}^r\sum_{\substack{0<n_1<\cdots<n_i<M \\ -M<n_{i+1}<\cdots<n_r<0}}
\frac{1}{n^{k_1}_1\cdots n^{k_r}_r} \in \bQ,\\
&\zeta^{\sh}_{\cS, M}(\bk)
\coloneqq\sum_{i=0}^r\sum_{\substack{0<n_1<\cdots<n_i \\ n_{i+1}<\cdots<n_r<0 \\ n_i-n_{i+1}<M}}
\frac{1}{n^{k_1}_1\cdots n^{k_r}_r} \in \bQ.
\end{align*}
\end{corollary}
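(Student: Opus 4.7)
The plan is to deduce the corollary directly from Theorem~\ref{lim} by extracting the constant term with respect to $t$ on both sides of the identity
\[
\zeta^\bullet_{\wcS}(\bk)=\lim_{M\to\infty}\zeta^\bullet_{\wcS,M}(\bk),
\]
where the limit is coefficientwise as a power series in $t$.

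First I would verify that, on the truncated side, $\zeta^\bullet_{\cS,M}(\bk)$ is literally the value at $t=0$ of $\zeta^\bullet_{\wcS,M}(\bk)$. Comparing the sums in Definition~\ref{def:truncated-sums} with those defining $\zeta^\bullet_{\cS,M}(\bk)$ in the statement of the corollary, this is immediate: specializing $t\mapsto 0$ turns each factor $(n_j+t)^{k_j}$ into $n_j^{k_j}$, and the summation ranges coincide. Since coefficientwise convergence in $t$ implies convergence of the constant term, the constant-in-$t$ part of $\lim_{M\to\infty}\zeta^\bullet_{\wcS,M}(\bk)$ is $\lim_{M\to\infty}\zeta^\bullet_{\cS,M}(\bk)$.

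Next I would identify the constant-in-$t$ part of $\zeta^\bullet_{\wcS}(\bk)$. In the expression \eqref{eq:t-adic SMZV}, the variable $t$ appears only through the factor $t^{\wt(\bl)}$, so only the summands with $\bl=(0,\ldots,0)\in\bZ_{\geq 0}^{r-i}$ contribute to the constant term. For these summands $b\binom{\bk^{[i]}}{\bl}=1$ and $\overline{\bk^{[i]}+\bl}=\overline{\bk^{[i]}}$, so what remains is exactly
\[
\sum_{i=0}^r(-1)^{\wt(\bk^{[i]})}\zeta^\bullet(\bk_{[i]})\zeta^\bullet(\overline{\bk^{[i]}})=\zeta^\bullet_\cS(\bk).
\]
Combining these two observations with Theorem~\ref{lim} yields the desired equality.

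There is no real obstacle here: once one has Theorem~\ref{lim} in hand, the corollary is a direct consequence of the fact that passing to the constant term in $t$ commutes with the coefficientwise limit. The only mild point to check is the matching of the truncated sums and of the definition of $\zeta^\bullet_\cS(\bk)$ with the $\bl=\boldsymbol{0}$ contribution in \eqref{eq:t-adic SMZV}, which is routine.
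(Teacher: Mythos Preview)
Your proof is correct and follows exactly the paper's approach: the corollary is obtained by taking the constant term in $t$ on both sides of Theorem~\ref{lim}. The paper states this in one line, while you spell out the two routine verifications (that $\zeta^\bullet_{\cS,M}(\bk)=\zeta^\bullet_{\wcS,M}(\bk)\big|_{t=0}$ and that the $\bl=\boldsymbol{0}$ contribution in \eqref{eq:t-adic SMZV} recovers $\zeta^\bullet_{\cS}(\bk)$), but the argument is the same.
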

%%%%%%%%%%%%%%%%%%%%%%%%%%%%%%%%%%%%%%%%%%%%%%%%%%
%%%%%%%%%%%%%%%%%%%%%%%%%%%%%%%%%%%%%%%%%%%%%%%%%%
%Subsection 2.3
%%%%%%%%%%%%%%%%%%%%%%%%%%%%%%%%%%%%%%%%%%%%%%%%%%
\subsection{The harmonic relation for ($*$-truncated) $\wcS$-MZVs}\label{subsec:harmonic rel}
In this subsection, we prove the identity \eqref{truncated harmonic rel}, i.e., 
the harmonic relation for $*$-truncated $\wcS$-MZVs. 
Then we obtain the same relation for $\wcS$-MZVs by taking the limit. 

%%%%%%%%%%%%%%%%%%%%%%%%%%%%%%%%%%%%%%%%%%%%%%%%%%
\begin{theorem}\label{har for GSMZVSM*}
For any indices $\bk, \bl$, we have
\begin{equation}\label{har for M}
\zeta^{\ast}_{\wcS, M}(\bk\ast\bl)=\zeta^{\ast}_{\wcS, M}(\bk)\zeta^{\ast}_{\wcS, M}(\bl).
\end{equation}
\end{theorem}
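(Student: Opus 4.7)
The approach is to factor the $\ast$-truncated $\wcS$-MZV into independent ``positive'' and ``negative'' pieces and apply an ordinary harmonic relation to each piece separately. Define
\[
\zeta^{-}_{M}(\bl)\coloneqq\sum_{-M<n_{1}<\cdots<n_{s}<0}\frac{1}{(n_{1}+t)^{l_{1}}\cdots(n_{s}+t)^{l_{s}}}\in\bQ\jump{t}
\]
for an index $\bl=(l_{1},\ldots,l_{s})$ (with $\zeta^{-}_{M}(\varnothing)\coloneqq 1$), and let $Z^{-}_{M}\colon\frH^{1}\to\bQ\jump{t}$ be its $\bQ$-linear extension. Because the positive and negative summation ranges in Definition~\ref{def:truncated-sums} are disjoint, the definition rewrites tautologically as
\[
\zeta^{\ast}_{\wcS,M}(\bk)=\sum_{i=0}^{r}Z_{M}(z_{\bk_{[i]}})\,Z^{-}_{M}(z_{\bk^{[i]}}).
\]

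The first step is to verify that $Z^{-}_{M}$ is a $\bQ$-algebra homomorphism from $\frH^{1}_{\ast}$ to $\bQ\jump{t}$. This is the direct analogue of the corresponding property of $Z_{M}$ recalled in \S\ref{subsec:Preliminaries}, with exactly the same proof: expanding $\zeta^{-}_{M}(\bk)\zeta^{-}_{M}(\bl)$ as a double sum over decreasing tuples in $(-M,0)$ and collecting terms according to whether each pair of indices coincides, or is ordered, produces precisely $\zeta^{-}_{M}(\bk\ast\bl)$. The shift $+t$ is irrelevant because in any merged contribution $n_{a}=m_{b}$ the factor becomes $(n_{a}+t)^{-k_{a}-l_{b}}$, which matches the merged entry $k_{a}+l_{b}$ produced by the stuffle.

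Combining this with the harmonic relation for $Z_{M}$, the decomposition above gives
\[
\zeta^{\ast}_{\wcS,M}(\bk)\,\zeta^{\ast}_{\wcS,M}(\bl)=\sum_{i,j}Z_{M}\bigl(z_{\bk_{[i]}}\ast z_{\bl_{[j]}}\bigr)\,Z^{-}_{M}\bigl(z_{\bk^{[i]}}\ast z_{\bl^{[j]}}\bigr),
\]
so the theorem reduces to the identity
\[
\Delta(z_{\bk}\ast z_{\bl})=\Delta(z_{\bk})\ast\Delta(z_{\bl})
\]
in $\frH^{1}\otimes\frH^{1}$, where $\Delta(z_{\bh})\coloneqq\sum_{p=0}^{\dep(\bh)}z_{\bh_{[p]}}\otimes z_{\bh^{[p]}}$ is the deconcatenation coproduct and the tensor square carries the componentwise product $(a\otimes b)\ast(c\otimes d)\coloneqq(a\ast c)\otimes(b\ast d)$. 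This is the classical bialgebra compatibility between the stuffle product and deconcatenation in the quasi-shuffle Hopf algebra.

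The main obstacle is a clean proof of this compatibility. I would give a bijective argument: for any stuffle $\bh$ of $(\bk,\bl)$ and split position $p$ of $\bh$, the prefix $\bh_{[p]}$ is itself a stuffle of uniquely determined prefixes $\bk_{[i]}$ and $\bl_{[j]}$, and the suffix $\bh^{[p]}$ is then a stuffle of $\bk^{[i]}$ and $\bl^{[j]}$; this assignment is a bijection between the index sets parameterizing monomials on the two sides, and a term-by-term coefficient match finishes the proof. Alternatively, one may induct on $\dep(\bk)+\dep(\bl)$ using the defining recursion of $\ast$ and the recursion $\Delta(wz_{k})=(\mathrm{id}\otimes R_{z_{k}})\Delta(w)+wz_{k}\otimes 1$ for $\Delta$, where $R_{z_{k}}$ denotes right concatenation by $z_{k}$.
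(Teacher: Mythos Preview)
Your argument is correct, but it follows a genuinely different path from the paper's. The paper observes that, if one sets $n(t)\coloneqq n$ for $n>0$ and $n(t)\coloneqq n+t$ for $n<0$, then
\[
\zeta^{\ast}_{\wcS,M}(\bk)=\sum_{\substack{n_1\prec\cdots\prec n_r\\ 0<|n_j|<M}}\frac{1}{n_1(t)^{k_1}\cdots n_r(t)^{k_r}},
\]
where $\prec$ is Kontsevich's total order $1\prec 2\prec\cdots\prec\infty=-\infty\prec\cdots\prec -2\prec -1$ on the nonzero integers. Once the sum is written over a single totally ordered index set, the harmonic relation follows \emph{verbatim} from the usual stuffle argument for truncated MZVs, with no need to invoke any coproduct compatibility.

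Your approach instead decomposes the sum into its positive and negative halves, verifies the stuffle relation for each half separately, and then assembles the two halves using the bialgebra identity $\Delta(u\ast v)=\Delta(u)\ast\Delta(v)$ for the quasi-shuffle Hopf algebra with deconcatenation coproduct. This is a perfectly valid and well-known identity (it is Hoffman's theorem that quasi-shuffle algebras are Hopf with deconcatenation), and your sketch of the bijective proof is the standard one; just be careful that the bijection is between \emph{stuffle data} (the underlying surjective order-preserving maps) together with a cut, not between resulting monomials $z_{\bh}$ together with a cut, since the same $\bh$ may arise from several stuffles. The paper's single-order trick is shorter and bypasses the Hopf-algebra machinery; your route is more modular, makes the algebraic structure explicit, and would adapt cleanly to any situation where the summation set splits as a concatenation of two independent ordered pieces.
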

%%%%%%%%%%%%%%%%%%%%%%%%%%%%%%%%%%%%%%%%%%%%%%%%%%
\begin{proof}
For a non-zero integer $n$, set
\begin{equation}\label{n(t)}
n(t)\coloneqq
\begin{cases}
n & (n>0), \\
n+t & (n<0).
\end{cases}
\end{equation}
Then, we see that
\[
\zeta^{\ast}_{\wcS, M}(\bk)=\sum_{\substack{n_1\prec \cdots \prec n_r \\ 0<|n_1|, \ldots, |n_r|<M}}\frac{1}{n_1(t)^{k_1}\cdots n_r(t)^{k_r}}, 
\]
where $\prec$ is Kontsevich's order on the set 
$(\bZ\setminus\{0\})\cup\{\infty=-\infty\}$ defined as
\[
1 \prec 2 \prec \cdots \prec \infty=-\infty \prec \cdots \prec -2 \prec -1.
\]
This is a natural generalization of \cite[(9.1)]{Kan}.
By this expression, we can prove the harmonic relation for $\ast$-truncated $\wcS$-MZVs in exactly the same way 
as the harmonic relation for truncated MZVs.
\end{proof}

%%%%%%%%%%%%%%%%%%%%%%%%%%%%%%%%%%%%%%%%%%%%%%%%%%
\begin{corollary}\label{har for GSMZVS*}
For any indices $\bk$ and $\bl$, we have
\[
\zeta^{\ast}_{\wcS}(\bk*\bl)=\zeta^{\ast}_{\wcS}(\bk)\zeta^{\ast}_{\wcS}(\bl).
\]
\end{corollary}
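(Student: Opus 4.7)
The plan is to obtain the corollary as a direct consequence of Theorem \ref{har for GSMZVSM*} by passing to the limit $M \to \infty$, using Theorem \ref{lim} (which is Theorem \ref{main1}) for the $\bullet = \ast$ case. More precisely, I would start from the identity
\[
\zeta^{\ast}_{\wcS, M}(\bk\ast\bl)=\zeta^{\ast}_{\wcS, M}(\bk)\,\zeta^{\ast}_{\wcS, M}(\bl)
\]
valid for every positive integer $M$, and take the coefficientwise limit of each side.

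For the left-hand side, I first note that $\bk \ast \bl$ denotes a formal sum: writing $z_{\bk}\ast z_{\bl}=\sum_{\bh}c_{\bh}z_{\bh}$ with finitely many indices $\bh$ and nonnegative integers $c_{\bh}$, the convention stated after Theorem \ref{DSR for t-adic SMZV} (and inherited for truncated values) gives $\zeta^\ast_{\wcS,M}(\bk\ast\bl)=\sum_{\bh}c_{\bh}\zeta^\ast_{\wcS,M}(\bh)$. Since this is a \emph{finite} $\bQ$-linear combination and each term converges coefficientwise in $t$ to $\zeta^\ast_{\wcS}(\bh)$ by Theorem \ref{lim}, we obtain
\[
\lim_{M\to\infty}\zeta^\ast_{\wcS,M}(\bk\ast\bl)=\sum_{\bh}c_{\bh}\zeta^\ast_{\wcS}(\bh)=\zeta^\ast_{\wcS}(\bk\ast\bl)
\]
in $\cZ\jump{t}$ (coefficientwise).

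For the right-hand side, the key observation is that coefficientwise convergence of power series is preserved under multiplication: the coefficient of $t^n$ in $\zeta^\ast_{\wcS,M}(\bk)\zeta^\ast_{\wcS,M}(\bl)$ is the finite sum $\sum_{i+j=n}[t^i]\zeta^\ast_{\wcS,M}(\bk)\cdot[t^j]\zeta^\ast_{\wcS,M}(\bl)$, and by Theorem \ref{lim} each factor converges to $[t^i]\zeta^\ast_{\wcS}(\bk)$ and $[t^j]\zeta^\ast_{\wcS}(\bl)$ respectively as $M\to\infty$. Hence
\[
\lim_{M\to\infty}\zeta^\ast_{\wcS,M}(\bk)\,\zeta^\ast_{\wcS,M}(\bl)=\zeta^\ast_{\wcS}(\bk)\,\zeta^\ast_{\wcS}(\bl)
\]
coefficientwise. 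Comparing the two limits yields the claim.

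I do not expect any genuine obstacle: all the analytic content is already packaged into Theorems \ref{har for GSMZVSM*} and \ref{lim}, and the only thing to verify is the elementary compatibility of coefficientwise limits with finite sums and with products in $\cZ\jump{t}$. The proof should accordingly be short, essentially a one-paragraph deduction.
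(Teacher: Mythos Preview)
Your proposal is correct and follows exactly the paper's approach: the paper's proof is the single sentence ``Take the limit $M\to\infty$ in \eqref{har for M} and use Theorem~\ref{lim}.'' You have simply spelled out the routine justifications (compatibility of coefficientwise limits with finite $\bQ$-linear combinations and with products in $\cZ\jump{t}$) that the paper leaves implicit.
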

%%%%%%%%%%%%%%%%%%%%%%%%%%%%%%%%%%%%%%%%%%%%%%%%%%
\begin{proof}
Take the limit $M\to\infty$ in \eqref{har for M} and use Theorem~\ref{lim}. 
\end{proof}
%%%%%%%%%%%%%%%%%%%%%%%%%%%%%%%%%%%%%%%%%%%%%%%%%%
%%%%%%%%%%%%%%%%%%%%%%%%%%%%%%%%%%%%%%%%%%%%%%%%%%
%Section 3
%%%%%%%%%%%%%%%%%%%%%%%%%%%%%%%%%%%%%%%%%%%%%%%%%%
%%%%%%%%%%%%%%%%%%%%%%%%%%%%%%%%%%%%%%%%%%%%%%%%%%
\section{Truncated $\wcS$-MZV associated with a $2$-colored rooted tree}\label{sec3}
%%%%%%%%%%%%%%%%%%%%%%%%%%%%%%%%%%%%%%%%%%%%%%%%%%
%Subsection 3.1
%%%%%%%%%%%%%%%%%%%%%%%%%%%%%%%%%%%%%%%%%%%%%%%%%%
\subsection{Definition and example}\label{subsec3.1}
In this subsection, we define the \emph{truncated $\wcS$-MZVs 
associated with $2$-colored rooted trees}. When the tree is linear, 
we recover the $\sh$-truncated $\wcS$-MZV $\zeta^{\sh}_{\wcS, M}(\bk)$ defined in Definition~\ref{def:truncated-sums}.

First, we recall the definition of $2$-colored rooted trees.
%%%%%%%%%%%%%%%%%%%%%%%%%%%%%%%%%%%%%%%%%%%%%%%%%%
\begin{definition}[$2$-colored rooted tree {\cite[Definition~1.2]{O}}]\label{def of 2-crt}
A \emph{$2$-colored rooted tree} is a quadruple $X=(V, E, \rt, V_\bullet)$ consisting of the following data:
\begin{enumerate}
\item $(V, E)$ is a finite tree with the set of vertices $V$ and the set of edges $E$. 
Note that $\#V=\#E+1<+\infty$. 
\item $\rt \in V$ is a vertex, called the \emph{root}.
\item $V_\bullet$ is a subset of $V$ containing all terminals of $(V, E)$. 
Here, a \emph{terminal vertex} means a vertex of degree $1$.
\end{enumerate}
\end{definition}
%%%%%%%%%%%%%%%%%%%%%%%%%%%%%%%%%%%%%%%%%%%%%%%%%%
We call a tuple $(k_e)_{e\in E}\in \bZ^{E}_{\geq0}$ an \emph{index} on $X$.
Recall that $t$ denotes an indeterminate. 
%%%%%%%%%%%%%%%%%%%%%%%%%%%%%%%%%%%%%%%%%%%%%%%%%%
\begin{definition}\label{def of GSMZV assoc. with 2-crt}
Let $X=(V, E, \rt, V_\bullet)$ be a $2$-colored rooted tree, $u$ an element of $V_\bullet$ and $\bk=(k_e)_{e\in E} \in \bZ^{E}_{\geq0}$ an index on $X$.
Then we define 
\[
\zeta^{}_M(X, u; \bk)\coloneqq\sum_{(m_v)_{v\in V_{\bullet}} \in I_M(V_\bullet,u)}
\prod_{e \in E}\left(\sum_{v \in V_\bullet \text{ s.t.\ } e \in P(\rt, v)}
(m_v+\delta_{u,v}t)\right)^{-k_e}\in\mathbb{Q}\jump{t}.
\]
Here $P(\rt, v)$ is the path from the root $\rt$ to $v$, $\delta_{u,v}$ is the Kronecker delta and
\[
I_M(V_{\bullet},u)\coloneqq
\biggl\{(m_v)_{v\in V_{\bullet}} \in \bZ^{V_{\bullet}} \biggm| 
m_v>0 \; (v\neq u),\ -M<m_{u}<0,\ \sum_{v\in V_{\bullet}}m_v=0\biggr\}.
\]
Moreover, we define the \emph{truncated $\wcS$-MZV associated with $X$} by
\[
\zeta^{}_{\wcS, M}(X; \bk)\coloneqq\sum_{u\in V_\bullet}\zeta^{}_M(X, u; \bk). 
\]
\end{definition}
%%%%%%%%%%%%%%%%%%%%%%%%%%%%%%%%%%%%%%%%%%%%%%%%%%

In the following, we denote by $L_e\bigl(X,u; (m_v)_{v\in V_{\bullet}})$ 
the factor appearing in the definition of $\zeta^{}_{M}(X,u; \bk)$, that is,
\[
L_e\bigl(X,u; (m_v)_{v\in V_{\bullet}}\bigr)\coloneqq
\sum_{v \in V_\bullet \text{ s.t.\ } e \in P(\rt, v)}(m_v+\delta_{u,v}t).
\]

We use diagrams to indicate $2$-colored rooted trees. 
For $X=(V, E, \rt, V_\bullet)$, 
the symbol $\bullet$ (resp.~$\circ$) denotes a vertex in $V_{\bullet}$ 
(resp.~$V_\circ\coloneqq V\setminus V_\bullet$) which is not the root. 
We use the symbol $\blacksquare$ or $\square$ to denote the root 
according to whether the root belongs to $V_\bullet$ or not. 
%In fact, we will consider only the cases where the root is in $V_\bullet$ in this paper. 
We also use the symbol $\times$ as the wild-card, i.e., 
to indicate a vertex which may or may not belong to $V_\bullet$ and 
may or may not be the root. 
If endpoints of an edge $e$ are $v$ and $v'$, then we will sometimes express $e$ by the set $\{v,v'\}$.

%%%%%%%%%%%%%%%%%%%%%%%%%%%%%%%%%%%%%%%%%%%%%%%%%%
\begin{example}\label{ex_of_2crt1}
For an integer $r\geq 0$, let us consider the linear tree with $r+1$ vertices $v_1,\ldots,v_{r+1}$ 
and $r$ edges $e_a\coloneqq\{v_a,v_{a+1}\}$ ($a=1,\ldots,r$). 
We define the $2$-colored rooted tree $X=(V, E, \rt, V_\bullet)$ 
by setting $\rt\coloneqq v_{r+1}$ and $V_\bullet\coloneqq V=\{v_1,\ldots,v_{r+1}\}$. 
We identify an index $\bk=(k_1,\ldots,k_r)$ in the usual sense 
with an index on $X$ by setting $k_a\coloneqq k_{e_a}$. 
This situation is indicated by the diagram: 
\[
\begin{tikzpicture}
%Coordinates
\coordinate (v_1) at (0,2.0) node at (v_1) [left] {\tiny $v_1$};
\coordinate (v_2) at (0,1.4) node at (v_2) [left] {\tiny $v_2$};
\coordinate (v_r) at (0,0.6) node at (v_r) [left] {\tiny $v_r$};
\coordinate (v_{r+1}) at (0,0) node at (v_{r+1}) [left] {\tiny $\rt=v_{r+1}$};
%Lines
\draw  (0,1.4) -- node [right] {\tiny $k_1$} (0,2.0);
\draw [dotted] (0,0.6) -- (0,1.4);
\draw  (0,0) -- node [right] {\tiny $k_r$} (0,0.6);
%Black nodes
\fill (v_1) circle (2pt) (v_2) circle (2pt) (v_r) circle (2pt);
\fill (-0.1,-0.1) rectangle (0.1,0.1);
\end{tikzpicture}
\]
For $(m_v)_{v\in V_\bullet}\in\bZ^{V_\bullet}$, 
we put $m_i\coloneqq m_{v_i}$ ($i=1,\ldots,r+1$) and 
\begin{equation}\label{eq:M_ij}
M_{i,j}\coloneqq m_i+\cdots+m_j,\qquad M_j\coloneqq M_{1,j}
\end{equation}
for $1\leq i\leq j\leq r+1$. Then we have 
 \[
L_{e_a}\bigl(X,v_i;(m_v)_{v\in V_{\bullet}}\bigr)=\begin{cases}
M_a & (a<i),\\
M_a+t & (a\geq i) 
\end{cases}
\]
for $1 \leq a \leq r$ and $1\leq i\leq r+1$.
Thus we obtain
\begin{align*}
&\zeta^{}_M(X, v_i; \bk)\\
&=\sum_{\substack{m_1, \ldots, m_{i-1}, m_{i+1}, \ldots, m_{r+1}>0 \\ -M<m_i<0 \\
\sum_{j=1}^{r+1}m_j=0}}
\frac{1}{M_1^{k_1}\cdots M_{i-1}^{k_{i-1}}(M_i+t)^{k_i} \cdots (M_r+t)^{k_r}}\\
&=\sum_{\substack{m_1, \ldots, m_{i-1}, m_{i+1}, \ldots, m_{r+1}>0 \\ M_{i-1}+M_{i+1, r+1} < M}}\frac{1}{M^{k_1}_1\cdots M^{k_{i-1}}_{i-1}(t-M_{i+1, r+1})^{k_i}\cdots(t-M_{r+1, r+1})^{k_r}}\\
&=\sum_{\substack{0<n_1<\cdots<n_{i-1} \\ n_i<\cdots<n_r<0 \\ n_{i-1}-n_i< M}}
\frac{1}{n^{k_1}_1\cdots n^{k_{i-1}}_{i-1}(n_i+t)^{k_i}\cdots (n_r+t)^{k_r}}, 
\end{align*}
and hence 
\[
\zeta^{}_{\wcS, M}(X; \bk) = \sum_{i=1}^{r+1}\zeta^{}_M(X, v_i; \bk)=\zeta^{\sh}_{\wcS, M}(\bk). 
\]
Therefore, the truncated $\wcS$-MZV associated with a 2-colored rooted tree 
generalizes the $\sh$-truncated $\wcS$-MZV. 
\end{example}
%%%%%%%%%%%%%%%%%%%%%%%%%%%%%%%%%%%%%%%%%%%%%%%%%%

%%%%%%%%%%%%%%%%%%%%%%%%%%%%%%%%%%%%%%%%%%%%%%%%%%
%Subsection 3.2
%%%%%%%%%%%%%%%%%%%%%%%%%%%%%%%%%%%%%%%%%%%%%%%%%%
\subsection{Basic properties}\label{subsec3.2}
%\subsection{Fundamental theorem}\label{subsec3.2}
In this subsection, we give basic properties for truncated $\wcS$-MZVs associated with $2$-colored rooted trees.
The proofs proceed in almost the same way as those in \cite{O}.
%%%%%%%%%%%%%%%%%%%%%%%%%%%%%%%%%%%%%%%%%%%%%%%%%%
\begin{proposition}\label{contracting1}
Let $X=(V, E, \rt, V_\bullet)$ be a $2$-colored rooted tree and $\bk=(k_{e'})_{e'\in E}$ an index on $X$.
Assume that there exists an edge $e=\{a,b\} \in E$ such that $b\in V_\circ\setminus\{\rt\}$ and $k_e=0$. 
Let $(V', E')$ be the tree obtained by contracting the edge $e$, 
as represented by the following figure:
\[
\begin{tikzpicture}
%Coordinates
\coordinate (T_1) at (-4.5,0) node at (T_1) {\tiny $T_1$};
\coordinate (T_2) at (-1.5,0) node at (T_2) {\tiny $T_2$};
\coordinate (a) at (-4,0) node at (a) {\tiny $\times$};
\coordinate (b) at (-2,0) node at (b) {}; 
%Lines
\draw (-4,0) to node [above] {\tiny $k_e=0$} (-2,0);
%Circles
\draw (T_1) circle [radius=14.1pt];
\draw (T_2) circle [radius=14.1pt];
%White node
\filldraw [fill=white] (b) circle [radius=0.7mm];
\draw[->,>=stealth] (-0.8,0) -- node [above] {\tiny contract $e$} (0.8,0);
%Coordinates
\coordinate (T_1) at (1.508,0) node at (T_1) {\tiny $T_1$};
\coordinate (T_2) at (2.492,0) node at (T_2) {\tiny $T_2$};
\coordinate (a) at (2,0) node at (a) {\tiny $\times$};
%Circles
\draw (T_1) circle [radius=14pt];
\draw (T_2) circle [radius=14pt];
\end{tikzpicture}
\]
Identifying $V'$ with $V\setminus\{b\}$ and $E'$ with $E\setminus\{e\}$, 
we define a $2$-colored rooted tree $X'\coloneqq(V', E', \rt, V_{\bullet})$ and 
an index $\bk'\coloneqq(k_{e'})_{e'\in E\setminus\{e\}}$ on $X'$. 
Then we have
\[
\zeta^{}_{\wcS, M}(X; \bk)=\zeta^{}_{\wcS, M}(X'; \bk').
\]
\end{proposition}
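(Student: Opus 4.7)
The plan is to verify the identity term by term, showing that each summand in the definition of $\zeta^{}_{\wcS,M}(X;\bk)$ matches the corresponding summand in $\zeta^{}_{\wcS,M}(X';\bk')$. The proof is essentially unpacking the definitions, exploiting two facts: that $k_e=0$ makes the $e$-factor equal to $1$, and that $b\notin V_{\bullet}$ means the summation index set is unchanged.

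First, I would check that $X'$ is indeed a well-defined $2$-colored rooted tree. Since $b\in V_\circ$ is not a terminal of $X$ (terminals lie in $V_\bullet$), we have $\deg_X(b)\geq 2$, so contracting $e$ does not create a new terminal at the merged vertex; thus $V_\bullet$ still contains all terminals of $X'$. The root $\rt$ is untouched because $b\neq\rt$. Next, for any $u\in V_\bullet$, the sets $I_M(V_\bullet,u)$ appearing in the definitions of $\zeta^{}_M(X,u;\bk)$ and $\zeta^{}_M(X',u;\bk')$ coincide, since $V_\bullet\subset V\setminus\{b\}=V'$.

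The key step is the comparison of the integrands. For the edge $e$ itself, the factor is $L_e(\cdots)^{-k_e}=L_e(\cdots)^0=1$, so it contributes trivially to the product. For any other edge $e'\in E\setminus\{e\}$, contracting $e$ preserves the path $P(\rt,v)$ modulo the removed edge $e$: for every $v\in V_\bullet$, one has $e'\in P_X(\rt,v)$ if and only if $e'\in P_{X'}(\rt,v)$ (since $v\neq b$ means the path from $\rt$ to $v$ in $X$ traverses the same set of non-$e$ edges as in $X'$). Hence
\[
L_{e'}\bigl(X,u;(m_v)_{v\in V_\bullet}\bigr)=L_{e'}\bigl(X',u;(m_v)_{v\in V_\bullet}\bigr)
\]
for every $u\in V_\bullet$ and every $(m_v)\in I_M(V_\bullet,u)$. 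Consequently the products agree, and so do $\zeta^{}_M(X,u;\bk)$ and $\zeta^{}_M(X',u;\bk')$. Summing over $u\in V_\bullet$ yields the claimed equality.

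There is no substantive obstacle here; the proposition is a bookkeeping statement reflecting the compatibility of the definitions with edge-contraction at a zero-labeled edge whose removed endpoint contributes no summation variable. The only subtlety I would make explicit is the verification that $b$ is non-terminal, so that $X'$ still satisfies the defining conditions of a $2$-colored rooted tree.
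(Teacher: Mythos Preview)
Your proof is correct and follows essentially the same approach as the paper: both argue that for each $u\in V_\bullet$ the factor $L_e^{-k_e}=1$ drops out and the remaining factors $L_{e'}$ coincide for $X$ and $X'$, so $\zeta^{}_M(X,u;\bk)=\zeta^{}_M(X',u;\bk')$, and then sum over $u$. You add an explicit check that $X'$ is a valid $2$-colored rooted tree, which the paper leaves implicit.
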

%%%%%%%%%%%%%%%%%%%%%%%%%%%%%%%%%%%%%%%%%%%%%%%%%%
\begin{proof}
It follows from the equality 
\begin{align*}
&\zeta^{}_M(X,u; \bk)\\
&=\sum_{(m_v)_{v\in V_{\bullet}} \in I_M(V_{\bullet},u)}
L_e\bigl(X,u; (m_v)_{v\in V_{\bullet}}\bigr)^{-k_e}
\prod_{e'\in E\setminus\{e\}}L_{e'}\bigl(X,u; (m_v)_{v\in V_{\bullet}}\bigr)^{-k_{e'}}\\
&=\sum_{(m_v)_{v\in V_{\bullet}} \in I_M(V_{\bullet},u)}
\prod_{e'\in E'}L_{e'}\bigl(X',u; (m_v)_{v\in V_{\bullet}}\bigr)^{-k_{e'}}\\
&=\zeta^{}_M(X',u; \bk'), 
\end{align*}
which holds for any $u\in V_\bullet$. 
\end{proof}
%%%%%%%%%%%%%%%%%%%%%%%%%%%%%%%%%%%%%%%%%%%%%%%%%%
\begin{proposition}\label{contracting2}
Let $X=(V, E, \rt, V_\bullet)$ be a $2$-colored rooted tree and $\bk=(k_e)_{e\in E}$ an index on $X$. 
Assume that there is a vertex $b\in V_\circ\setminus\{\rt\}$ of degree $2$, 
and let $e_1=\{a,b\}$ and $e_2=\{b,c\}$ be the edges incident on $b$. 
By setting $V'\coloneqq V\setminus\{b\}$, $e_{12}\coloneqq\{a,c\}$ and $E'\coloneqq (E\setminus\{e_1,e_2\})\cup\{e_{12}\}$, 
we define a $2$-colored rooted tree $X'\coloneqq(V', E', \rt, V_{\bullet})$. 
Moreover, we define an index $\bk'=(k'_e)_{e\in E'}$ on $X'$ by putting $k'_{e_{12}}\coloneqq k_{e_1}+k_{e_2}$ 
and $k'_e\coloneqq k_e$ for other edges $e\in E'\setminus\{e_{12}\}$. 
This situation can be represented by the following figure:
\[
\begin{tikzpicture}
%Coordinates
\coordinate (T_1) at (-4.5,0) node at (T_1) {\tiny $T_1$};
\coordinate (T_2) at (-1.5,0) node at (T_2) {\tiny $T_2$};
\coordinate (b) at (-3,0) node at (b) {}; 
%Lines
\draw (-4,0) to node [above] {\tiny $k_{e_1}$} (b);
\draw (b) to node [above] {\tiny $k_{e_2}$} (-2,0);
%Circles
\draw (T_1) circle [radius=14.1pt];
\draw (T_2) circle [radius=14.1pt];
%White node
\filldraw [fill=white] (b) circle [radius=0.7mm];
\draw[->,>=stealth] (-0.85,0) -- node [above] {\tiny joint $e_1$ and $e_2$} (1.35,0);
%Coordinates
\coordinate (T_1) at (2,0) node at (T_1) {\tiny $T_1$};
\coordinate (T_2) at (4.5,0) node at (T_2) {\tiny $T_2$};
%Lines
\draw (2.5,0) to node [above] {\tiny $k_{e_1}+k_{e_2}$} (4,0);
%Circles
\draw (T_1) circle [radius=14.1pt];
\draw (T_2) circle [radius=14.1pt];
\end{tikzpicture}
\]
Then we have
\[
\zeta^{}_{\wcS, M}(X; \bk)=\zeta^{}_{\wcS, M}(X'; \bk').
\]
\end{proposition}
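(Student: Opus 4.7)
The plan is to prove the pointwise equality $\zeta^{}_M(X,u;\bk)=\zeta^{}_M(X',u;\bk')$ for every $u\in V_\bullet$, from which the proposition follows by summing over $u\in V_\bullet$. Since $V_\bullet$ and the root are preserved by the operation, the index set $I_M(V_\bullet,u)$ appearing in both definitions is literally the same, so everything reduces to comparing the products of $L_e$-factors edge by edge.

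First I would orient the local picture: since $\rt\neq b$, the path $P(\rt,b)$ enters $b$ through exactly one of $e_1$ or $e_2$, and without loss of generality $a$ lies on $P(\rt,b)$ while $c$ does not. The key combinatorial observation is then that, for every $v\in V_\bullet$, the edge $e_1$ belongs to $P(\rt,v)$ if and only if $e_2$ does. Indeed, to reach any $v\in V_\bullet$ past $b$ the path must continue on through $c$, because $b$ itself is not a bullet vertex and its degree-$2$ hypothesis leaves no alternative. By the very definition of $X'$, this same set of $v$'s equals $\{v\in V_\bullet : e_{12}\in P_{X'}(\rt,v)\}$.

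Plugging this into the definition of $L_e$, the three factors agree:
\[
L_{e_1}\bigl(X,u;(m_v)_{v\in V_\bullet}\bigr)
=L_{e_2}\bigl(X,u;(m_v)_{v\in V_\bullet}\bigr)
=L_{e_{12}}\bigl(X',u;(m_v)_{v\in V_\bullet}\bigr).
\]
Consequently $L_{e_1}^{-k_{e_1}}L_{e_2}^{-k_{e_2}}=L_{e_{12}}^{-(k_{e_1}+k_{e_2})}=L_{e_{12}}^{-k'_{e_{12}}}$, while for every other edge the $L$-factor and its exponent are literally unchanged between $X$ and $X'$. This yields the term-by-term equality of summands.

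There is essentially no obstacle; the argument is purely a bookkeeping reformulation of the fact that a chain of $L$-factors with the same underlying sum collapses into a single factor. The only subtlety worth flagging is the role of the hypothesis $b\notin V_\bullet$: it is exactly what ensures that the running sums defining $L_{e_1}$ and $L_{e_2}$ coincide, and the statement would fail without it (if $b$ itself carried a summation variable, the two factors would differ by the contribution of $m_b$).
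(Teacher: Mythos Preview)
Your proof is correct and follows essentially the same approach as the paper: both argue that $\{v\in V_\bullet\mid e_1\in P(\rt,v)\}=\{v\in V_\bullet\mid e_2\in P(\rt,v)\}$, deduce $L_{e_1}=L_{e_2}=L_{e_{12}}$, collapse the two factors into one, and sum over $u\in V_\bullet$. Your write-up is slightly more detailed in justifying the combinatorial equality and in flagging the role of the hypothesis $b\notin V_\bullet$, but the argument is the same.
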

%%%%%%%%%%%%%%%%%%%%%%%%%%%%%%%%%%%%%%%%%%%%%%%%%%
\begin{proof}
Since $b$ is a vertex in $V_\circ\setminus\{\rt\}$, we have
\[
\{v \in V_\bullet \mid e_1\in P(\rt, v)\}=\{v \in V_\bullet \mid e_2 \in P(\rt, v)\}.
\]
Therefore, for $u \in V_\bullet$, we obtain
\begin{align*}
&\zeta^{}_M(X,u; \bk)\\
&=\sum_{(m_v)_{v\in V_{\bullet}} \in I_M(V_{\bullet},u)}
\prod_{e\in E} L_e\bigl(X,u;(m_v)_{v\in V_{\bullet}}\bigr)^{-k_e}\\
&=\sum_{(m_v)_{v\in V_{\bullet}} \in I_M(V_{\bullet},u)}
L_{e_1}\bigl(X,u; (m_v)_{v\in V_{\bullet}}\bigr)^{-(k_{e_1}+k_{e_2})}
\prod_{e \in E \setminus\{e_1, e_2\}} L_{e}\bigl(X,u;(m_v)_{v\in V_{\bullet}}\bigr)^{-k_{e}}\\
&=\sum_{(m_v)_{v\in V_{\bullet}} \in I_M(V_{\bullet},u)}
L_{e_{12}}\bigl(X',u; (m_v)_{v\in V_{\bullet}}\bigr)^{-k'_{e_{12}}}
\prod_{e \in E' \setminus\{e_{12}\}} L_{e}\bigl(X',u;(m_v)_{v\in V_{\bullet}}\bigr)^{-k'_{e}}\\
&=\zeta^{}_M(X', u; \bk').
\end{align*}
Thus we complete the proof by taking the sum over all $u\in V_\bullet$.
\end{proof}
%%%%%%%%%%%%%%%%%%%%%%%%%%%%%%%%%%%%%%%%%%%%%%%%%%
\begin{proposition}\label{change of root}
Let $X_1$ and $X_2$ be $2$-colored rooted trees which are distinct only in their roots, 
written as $X_i=(V, E, v_i, V_\bullet)$ ($i=1,2$). 
Then, for any index $\bk=(k_e)_{e\in E}$ on $X_1$, we have
\begin{multline}\label{eq:change of root}
\zeta^{}_{\wcS, M}(X_1; \bk)
=(-1)^{\sum_{e \in P(v_1, v_2)}k_e}\sum_{\bl=(l_e)\in\bZ_{\geq0}^{P(v_1,v_2)}}
\left[\prod_{e\in P(v_1, v_2)}\binom{k_e+l_e-1}{l_e}\right]\\
\times\zeta^{}_{\wcS, M}(X_2; \bk\oplus\bl)\,t^{\sum_{e \in P(v_1, v_2)}l_e}. 
\end{multline}
Here $\bk\oplus\bl$ denotes an index on $X_2$ whose $e$-component is given by 
\[
\begin{cases}
k_e+l_e & (e \in P(v_1, v_2)), \\
k_e & (e \not\in P(v_1, v_2)), 
\end{cases}
\]
and we use the convention
\[
\binom{l-1}{l}=
\begin{cases}
1 & (l=0), \\
0 & (l>0).
\end{cases}
\]
\end{proposition}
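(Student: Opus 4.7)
My plan is to reduce the identity to a per-edge computation. For each $u\in V_\bullet$ and $(m_v)_{v\in V_\bullet}\in I_M(V_\bullet,u)$, I would compare the factors $L_e(X_1,u;(m_v))$ and $L_e(X_2,u;(m_v))$ edge by edge. Since $X_1$ and $X_2$ share the same underlying tree, removing any edge $e$ splits $V$ into the same two components; and if $e\notin P(v_1,v_2)$, then $v_1$ and $v_2$ lie in the same component, so the sets $\{v\in V_\bullet:e\in P(v_i,v)\}$ coincide for $i=1,2$, hence $L_e(X_1,u)=L_e(X_2,u)$. If instead $e\in P(v_1,v_2)$, these two sets partition $V_\bullet$, and the constraint $\sum_{v\in V_\bullet}m_v=0$ (with $u\in V_\bullet$) yields the key identity
\[
L_e(X_1,u;(m_v))+L_e(X_2,u;(m_v))=\sum_{v\in V_\bullet}(m_v+\delta_{u,v}t)=t,
\]
so $L_e(X_1,u)=t-L_e(X_2,u)$.

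Next I would apply the formal binomial expansion
\[
\frac{1}{L_e(X_1,u)^{k_e}}=\frac{(-1)^{k_e}}{(L_e(X_2,u)-t)^{k_e}}=(-1)^{k_e}\sum_{l_e\geq 0}\binom{k_e+l_e-1}{l_e}\frac{t^{l_e}}{L_e(X_2,u)^{k_e+l_e}}
\]
independently at each edge $e\in P(v_1,v_2)$, leaving the factors for $e\notin P(v_1,v_2)$ intact. Taking the product over all $e\in E$ and using the index $\bk\oplus\bl$ on $X_2$ to package the new exponents, each summand of $\zeta^{}_M(X_1,u;\bk)$ becomes
\[
(-1)^{\sum_{e\in P(v_1,v_2)}k_e}\sum_{\bl=(l_e)\in\bZ_{\geq 0}^{P(v_1,v_2)}}\Biggl[\prod_{e\in P(v_1,v_2)}\binom{k_e+l_e-1}{l_e}\Biggr]t^{\sum_{e\in P(v_1,v_2)}l_e}\prod_{e\in E}L_e(X_2,u)^{-(\bk\oplus\bl)_e}.
\]
Interchanging the coefficientwise sum over $\bl$ with the outer summations over $u\in V_\bullet$ and $(m_v)\in I_M(V_\bullet,u)$, the inner double sum reassembles into $\zeta^{}_{\wcS,M}(X_2;\bk\oplus\bl)$, which is exactly the right-hand side of \eqref{eq:change of root}.

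The main subtlety I anticipate is the verification of the key identity $L_e(X_1,u)+L_e(X_2,u)=t$ for $e\in P(v_1,v_2)$: one must carefully track which of the two components of $V\setminus\{e\}$ each vertex of $V_\bullet$ belongs to, and observe that switching the root from $v_1$ to $v_2$ exchanges these two components in the definition of $L_e$. Once this is in hand, the binomial expansion automatically handles the degenerate case $k_e=0$ via the stated convention $\binom{l-1}{l}=\delta_{l,0}$, and the remainder is bookkeeping. The argument parallels in spirit the analogous manipulation carried out for $\cA$-MZVs on $2$-colored rooted trees in \cite{O}, with $t$ now playing the role of the formal variable that mediates between the two root perspectives.
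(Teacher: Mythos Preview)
Your proposal is correct and follows essentially the same approach as the paper's proof: the paper also distinguishes the cases $e\in P(v_1,v_2)$ and $e\notin P(v_1,v_2)$, establishes the identity $L_e(X_1,u)+L_e(X_2,u)=t$ in the former case via the partition \eqref{partitioned} and the constraint $\sum_{v\in V_\bullet}m_v=0$, shows equality of the $L_e$'s in the latter case, and then applies the same binomial expansion of $(t-L_e)^{-k_e}$ before summing over $u\in V_\bullet$.
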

%%%%%%%%%%%%%%%%%%%%%%%%%%%%%%%%%%%%%%%%%%%%%%%%%%
\begin{proof}
Take $u\in V_{\bullet}$ and $e\in E$.
First we consider the case $e \in P(v_1, v_2)$.
This situation can be illustrated as follows: 
\[
\begin{tikzpicture}
%Coordinates
\coordinate (T_1) at (2,0) node at (T_1) {\tiny $\times v_1$};
\coordinate (T_2) at (4.5,0) node at (T_2) {\tiny $\times v_2$};
%Lines
\draw (2.5,0) to node [above] {\tiny $e$} (4,0);
%Circles
\draw (T_1) circle [radius=14.1pt];
\draw (T_2) circle [radius=14.1pt];
\end{tikzpicture}
\]
Since
\begin{equation}\label{partitioned}
V_\bullet=\{v\in V_\bullet \mid e \in P(v_1, v)\}\sqcup\{v\in V_\bullet \mid e \in P(v_2, v)\},
\end{equation}
we have
\begin{align*}
&L_e\bigl(X_1, u; (m_v)_{v\in V_{\bullet}}\bigr)+L_e\bigl(X_2, u; (m_v)_{v\in V_{\bullet}}\bigr)\\
&=\sum_{v \in V_\bullet \text{ s.t. } e \in P(v_1, v)}(m_v+\delta_{u,v}t)
+\sum_{v \in V_\bullet \text{ s.t. } e \in P(v_2, v)}(m_v+\delta_{u,v}t)\\
&=\sum_{v \in V_\bullet}m_v+\sum_{v\in V_\bullet}\delta_{u,v}t=t.
\end{align*}

Next we consider the case $e\not\in P(v_1, v_2)$, which can be illustrated as follows:
\[
\begin{tikzpicture}
%Coordinates
\coordinate (T_1) at (2,0) node at (T_1) {};
\coordinate (T_2) at (4.5,0) node at (T_2) {};
\coordinate (v_1) at (2,0.1) node at (v_1) {\tiny $\times v_1$};
\coordinate (v_2) at (2,-0.1) node at (v_2) {\tiny $\times v_2$};
%Lines
\draw (2.5,0) to node [above] {\tiny $e$} (4,0);
%Circles
\draw (T_1) circle [radius=14.1pt];
\draw (T_2) circle [radius=14.1pt];
\end{tikzpicture}
\]
Since
\[
\{v \in V_\bullet \mid e \in P(v_1, v)\}=\{v \in V_\bullet \mid e \in P(v_2, v)\},
\]
we obtain
\begin{align*}
L_e\bigl(X_1,u; (m_v)_{v\in V_{\bullet}}\bigr)
&=\sum_{v \in V_\bullet \text{ s.t. } e\in P(v_1, v)}(m_v+\delta_{u,v}t)\\
&=\sum_{v \in V_\bullet \text{ s.t. } e\in P(v_2, v)}(m_v+\delta_{u,v}t)
=L_e\bigl(X_2,u; (m_v)_{v\in V_{\bullet}}\bigr).
\end{align*}
Thus we have
\begin{multline*}
\zeta^{}_{ M}(X_1,u; \bk)
=\sum_{(m_v)_{v\in V_{\bullet}} \in I_M(V_{\bullet},u)}
\prod_{e \in P(v_1, v_2)}\Bigl(t-L_e\bigl(X_2, u; (m_v)_{v\in V_{\bullet}}\bigr)\Bigr)^{-k_e}\\
\times \prod_{e\in E\setminus P(v_1, v_2)}L_e\bigl(X_2, u; (m_v)_{v\in V_{\bullet}}\bigr)^{-k_e}.
\end{multline*}
Therefore, we obtain the desired formula by expanding the factors $(t-L_e)^{-k_e}$ as 
\[
\frac{1}{(t-L_e)^{k_e}}=(-1)^{k_e}\sum_{l_e=0}^{\infty}\binom{k_e+l_e-1}{l_e}\frac{t^{l_e}}{L^{k_e+l_e}_e}, 
\]
and taking the sum over $u \in V_\bullet$. 
\end{proof}

\begin{lemma}\label{lem:one-step}
Consider a $2$-colored rooted tree $X=(V, E, \rt, V_\bullet)$ and 
an index $\bk=(k_e)_{e\in E}$ on $X$ of the following shape:  
\[
\begin{tikzpicture}
%Coordinates
\coordinate (T_0) at (0,0.4) node at (0,0.4) {\tiny $T_0$};
\coordinate (T_1) at (-1.5,3) node at (T_1) {\tiny $T_1$};
\coordinate (T_j) at (-0.4,3.5) node at (T_j) {\tiny $T_j$};
\coordinate (T_s) at (1.5,3) node at (T_s) {\tiny $T_s$};
\coordinate (v_0) at (0,1.6) node at (0,1.6) {};
\coordinate (v_1) at (0,0.75) node at (0,0.75) {};
\coordinate (v_{r+1}) at (0,0.0) node at (0,0) {};
%Lines
\draw              (-1.4,2.7) to [out=290,in=170] node [left] {\tiny $l_1$} (v_0);
\draw [dotted] (-1.21,3.2) -- (-0.75,3.45);
\draw              (-0.4,3.19) -- node [right] {\tiny $l_j$} (v_0);
\draw [dotted] (-0.05,3.5) -- (1.21,3.2);
\draw              (1.4,2.7) to [out=250, in=10] node [right] {\tiny $l_s$} (v_0);
\draw              (v_0) -- node [left] {\tiny $k'$} (v_1);
\draw (T_1) circle [radius=9pt];
\draw (T_j) circle [radius=9pt];
\draw (T_s) circle [radius=9pt];
\draw (T_0) circle [radius=10pt];
%Black nodes
\fill (-0.1,-0.1) rectangle (0.1,0.1);
%White node
\filldraw[fill=white] (v_0) circle [radius=0.7mm];
\end{tikzpicture}
\]
Here, $s$ and  $l_i \coloneqq k_{e_i}$ ($1\le i \le s$) are positive integers and 
$k' \coloneqq k_{e'}$ is a non-negative integer, 
where $\{e_i\}$ and $e'$ are the corresponding edges in $E$. $T_0, \ldots, T_s$ are subtrees of $(V, E)$.
Moreover, for $1\le i \le s$, let $\bh_i$ be the index on $X$ whose $e$-component is
\[
\begin{cases}
l_i-1 & (e=e_i),\\
k'+1 & (e=e'),\\
k_e & (\text{otherwise}).
\end{cases}
\]
Then, we have
\begin{equation}\label{eq:one-step}
\zeta^{}_{\wcS,M}(X;\bk)=\sum_{i=1}^s\zeta^{}_{\wcS,M}(X;\bh_i).
\end{equation}
\end{lemma}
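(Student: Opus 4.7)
The plan is to reduce the identity to a single partial fraction identity that holds termwise inside the defining sum of $\zeta^{}_{\wcS,M}(X,u;\bk)$. First I would fix any choice of $u \in V_\bullet$ and $(m_v)_{v \in V_\bullet} \in I_M(V_\bullet,u)$, and compare the summands of $\zeta^{}_M(X,u;\bk)$ and of $\zeta^{}_M(X,u;\bh_i)$. Because $\bk$ and $\bh_i$ differ only at the edges $e_i$ and $e'$, all factors $L_e$ for $e \notin \{e_1,\ldots,e_s,e'\}$ cancel, so it suffices to prove the identity
\[
\frac{1}{L_{e_1}^{l_1}\cdots L_{e_s}^{l_s}L_{e'}^{k'}}
=\sum_{i=1}^s\frac{1}{L_{e_1}^{l_1}\cdots L_{e_i}^{l_i-1}\cdots L_{e_s}^{l_s}L_{e'}^{k'+1}}.
\]

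Next I would compute the $L_e$'s explicitly in terms of the subtrees. Since $v_0 \in V_\circ\setminus\{\rt\}$ and each $e_i$ separates $v_0$ from $T_i$, while $e'$ separates $v_0$ from $T_0$ (which contains $\rt$), the sets $\{v\in V_\bullet \mid e_i\in P(\rt,v)\}$ for $i=1,\ldots,s$ form a partition of $\{v\in V_\bullet \mid e'\in P(\rt,v)\}$. Setting $S_i \coloneqq \sum_{v\in V_\bullet\cap T_i}(m_v+\delta_{u,v}t)$, this gives $L_{e_i}=S_i$ and $L_{e'}=S_1+\cdots+S_s$.

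The identity then becomes
\[
\frac{1}{S_1^{l_1}\cdots S_s^{l_s}(S_1+\cdots+S_s)^{k'}}
=\sum_{i=1}^s\frac{1}{S_1^{l_1}\cdots S_i^{l_i-1}\cdots S_s^{l_s}(S_1+\cdots+S_s)^{k'+1}},
\]
which I would verify by pulling out $(S_1+\cdots+S_s)^{-(k'+1)}$ from the right-hand side and noting that $\sum_i S_i/(S_1^{l_1}\cdots S_s^{l_s}) = (S_1+\cdots+S_s)/(S_1^{l_1}\cdots S_s^{l_s})$. Summing the termwise equality over $(m_v)\in I_M(V_\bullet,u)$ and then over $u\in V_\bullet$ yields \eqref{eq:one-step}.

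The only subtle point—hardly an obstacle—is justifying that each $S_i$ is nonzero and invertible as a power series in $t$ on the summation range, so that the partial fraction manipulation is valid as an identity in $\bQ\jump{t}$ (expanding $L_{e'}^{-k'}$ and $L_{e_i}^{-l_i}$ via the geometric-series trick used earlier in the paper). Once the simple algebraic identity above is in hand, the lemma follows immediately.
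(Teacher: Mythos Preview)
Your proposal is correct and follows essentially the same route as the paper: fix $u$ and $(m_v)$, observe that $L_{e'}=L_{e_1}+\cdots+L_{e_s}$ from the tree structure, and apply the partial fraction identity $\frac{1}{L_1\cdots L_s}=\frac{1}{L_{e'}}\sum_{i}\frac{L_i}{L_1\cdots L_s}$ termwise before summing over $(m_v)$ and $u$. Your additional remarks on why the $V_\bullet$-sets partition and on invertibility in $\bQ\jump{t}$ are welcome elaborations but do not change the argument.
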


\begin{proof}
For a tree $T$, we denote by $V(T)$ (resp.\ $E(T)$) the set of vertices (resp.\ edges) of $T$.
By definition, we have
\begin{multline*}
\zeta^{}_{\wcS,M}(X;\bk)=\sum_{u\in V_\bullet}\sum_{(m_v)_{v\in V_{\bullet}}\in I_M(V_{\bullet},u)}
\prod_{i=1}^s\frac{1}{L_{e_i}\bigl(X,u;(m_v)_{v\in V_{\bullet}}\bigr)^{k_{e_i}}}\\
\times\frac{1}{L_{e'}\bigl(X,u;(m_v)_{v\in V_{\bullet}}\bigr)^{k'}}
\prod_{i=0}^s\prod_{e \in E(T_i)}\frac{1}{L_{e}\bigl(X,u;(m_v)_{v\in V_{\bullet}}\bigr)^{k_e}}.
\end{multline*}
If we abbreviate as $L_{e'}=L_{e'}\bigl(X,u;(m_v)_{v\in V_{\bullet}}\bigr)$ 
and $L_i=L_{e_i}\bigl(X,u;(m_v)_{v\in V_{\bullet}}\bigr)$ ($i=1,\ldots,s$), 
we have $L_{e'}=L_1+\cdots+L_s$ by definition, and hence 
\[\frac{1}{L_1\cdots L_s}=\frac{1}{L_{e'}}\sum_{i=1}^s\frac{L_i}{L_1\cdots L_s}. \]
This implies \eqref{eq:one-step}.
\end{proof}

We define a $\bQ$-linear map $Z^{\sh}_{\wcS,M}\colon\frH^1\to\bQ\jump{t}$ 
by $Z^{\sh}_{\wcS,M}(z_{\bk})\coloneqq \zeta^{\sh}_{\wcS,M}(\bk)$. 
%By using Proposition~\ref{contracting1} and Proposition~\ref{contracting2}, 
We prove that a truncated $\wcS$-MZV associated with 
a 2-colored rooted tree of a certain shape can be written explicitly as a value of $Z^{\sh}_{\wcS, M}$. 

\begin{theorem}\label{thm: explicit calc}
Let $\bk_i=(k_{i,1},\ldots,k_{i,r_i})$ ($i=1,\ldots,s$) be $s$ ($\ge 1$) non-empty indices, 
$\bk'=(k'_1,\ldots,k'_r)$ be a (possibly empty) index, and $k'$ be a non-negative integer. 
Consider a $2$-colored rooted tree $X=(V, E, \rt, V_\bullet)$ and 
an index $\bk=(k_e)_{e\in E}$ on $X$ of the following shape:  
\[
\begin{tikzpicture}
%Coordinate
\coordinate  (v_{1, 1}) at (-3, -5.3) node at (v_{1, 1}) [left] {\tiny $v_{1, 1}$};
\coordinate  (v_{1, 2}) at (-3, -6.0) node at (v_{1, 2}) [left] {\tiny $v_{1, 2}$};
\coordinate  (v_{1, r_1-1}) at (-3, -6.8) node at (v_{1, r_1-1}) [left] {\tiny $v_{1, r_1-1}$};
\coordinate  (v_{1, r_1}) at (-3, -7.5) node at (v_{1, r_1}) [left] {\tiny $v_{1, r_1}$};
\coordinate (v_{i,1}) at (0,-4.7) node at (v_{i,1}) [left] {\tiny $v_{i,1}$};
\coordinate (v_{i,2}) at (0,-5.4) node at (v_{i,2}) [left] {\tiny $v_{i,2}$};
\coordinate (v_{i,r_i-1}) at (0,-6.3) node at (v_{i,r_i-1}) [left] {\tiny $v_{i,r_i-1}$};
\coordinate (v_{i,r_i}) at (0,-7) node at (v_{i,r_i}) [left] {\tiny $v_{i,r_i}$};
\coordinate  (v_{s,1}) at (3, -5.3) node at (v_{s,1}) [right] {\tiny $v_{s, 1}$};
\coordinate  (v_{s,2}) at (3, -6.0) node at (v_{s,2}) [right] {\tiny $v_{s, 2}$};
\coordinate  (v_{s,r_s-1}) at (3, -6.8) node at (v_{s,r_s-1}) [right] {\tiny $v_{s, r_s-1}$};
\coordinate  (v_{s,r_s}) at (3, -7.5) node at (v_{s,r_s}) [right] {\tiny $v_{s, r_s}$};
\coordinate (w) at (0,-8) node at (w) [above left] {\tiny $w$};
\coordinate (v_1) at (0,-8.6) node at (v_1) [left] {\tiny $v_1$};
\coordinate (v_2) at (0,-9.2) node at (v_2) [left] {\tiny $v_2$};
\coordinate (v_r) at (0,-10.1) node at (v_r) [left] {\tiny $v_r$};
\coordinate (rt) at (0,-10.7) node at (rt) [left] {\tiny $\rt$};
%Lines
\draw              (-3,-6.0) --node [right] {\tiny $k_{1, 1}$} (-3,-5.3);
\draw [dotted] (-3,-6.8) -- (-3,-6.0);
\draw              (-3,-7.5) --node [right] {\tiny $k_{1, r_1-1}$} (-3,-6.8);
\draw              (0,-8) -- node [below] {\tiny $k_{1, r_1}$} (-3,-7.5);
\draw              (0,-5.4) -- node [right] {\tiny $k_{i,1}$} (0,-4.7);
\draw [dotted] (0,-6.3) -- (0,-5.4);
\draw              (0,-7) -- node [right] {\tiny $k_{i, r_i-1}$} (0,-6.3);
\draw              (0,-8) -- node [right] {\tiny $k_{i, r_i}$} (0,-7);
\draw              (3,-6.0) --node [left] {\tiny $k_{s, 1}$} (3,-5.3);
\draw [dotted] (3,-6.8) -- (3,-6.0);
\draw              (3,-7.5) --node [left] {\tiny $k_{s, r_s-1}$} (3,-6.8);
\draw              (0,-8) -- node [below] {\tiny $k_{s, r_s}$} (3,-7.5);
\draw [dotted] (-2,-5.7) -- (-0.5,-5.7);
\draw [dotted] (2,-5.7) -- (0.5,-5.7);
\draw              (0,-9.2) -- node [right] {\tiny $k'_1$} (0,-8.6) --node [right] {\tiny $k'$} (0,-8);
\draw [dotted] (0,-10.1) -- (0,-9.2);
\draw              (0,-10.7) -- node [right] {\tiny $k'_r$} (0,-10.1);
%Black node
\fill (v_{1, 1}) circle (2pt) (v_{1, 2}) circle (2pt) (v_{1, r_1-1}) circle (2pt) (v_{1, r_1}) circle (2pt);
\fill (v_{i,1}) circle (2pt) (v_{i,2}) circle (2pt) (v_{i,r_i-1}) circle (2pt) (v_{i,r_i}) circle (2pt);
\fill (v_{s,1}) circle (2pt) (v_{s,2}) circle (2pt) (v_{s,r_s-1}) circle (2pt) (v_{s,r_s}) circle (2pt);
\fill (v_1) circle (2pt) (v_2) circle (2pt) (v_r) circle (2pt);
\fill (-0.1,-10.8) rectangle (0.1,-10.6);
%White node
\filldraw[fill=white] (w) circle[radius=0.7mm];
\end{tikzpicture}
\]
%Here, $s, r_1, \ldots, r_s, k_{i,j} \coloneqq k_{e_{i,j}}$ $(1\le i \le s, 1\le j \le r_i)$ 
%and $k'_i \coloneqq k_{e'_i}$ $(1\le i \le r)$ are positive integers and 
%$k' \coloneqq k_{e'}$ is a non-negative integer, 
%where $e_{i, j}$, $e'_i$ and $e'$ are the corresponding edges in $E$. 
%Set $\bk_i \coloneqq (k_{i,1}, \ldots, k_{i,r_i})$ $(1\le i \le s)$ and $\bk' \coloneqq (k'_1, \ldots, k'_r)$.
Then we have
\begin{align}\label{eq: special case formula}
\zeta^{}_{\wcS, M}(X; \bk)= Z^{\sh}_{\wcS, M}\bigl((z_{\bk_1} \sh \cdots \sh z_{\bk_s})x^{k'}z_{\bk'}\bigr).
\end{align}
\end{theorem}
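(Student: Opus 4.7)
We argue by induction on $N\coloneqq\sum_{i=1}^s\wt(\bk_i)$. The key observation is that Lemma~\ref{lem:one-step} applied at $w$ mirrors the recursive definition of the shuffle product: writing $z_{\bk_i}=z_{\bk_i^{-}}u_i$ with $u_i\in\{x,y\}$ the last letter, we have
\[
(z_{\bk_1}\sh\cdots\sh z_{\bk_s})\,x^{k'}z_{\bk'}
=\sum_{i=1}^s(z_{\bk_1}\sh\cdots\sh z_{\bk_i^{-}}\sh\cdots\sh z_{\bk_s})\,u_i\,x^{k'}z_{\bk'},
\]
with $u_i=x$ exactly when $k_{i,r_i}\geq 2$. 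The plan is to match each summand on the right with the corresponding summand produced by Lemma~\ref{lem:one-step}.

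For the base case $s=1$, the junction $w$ has degree $2$, so Proposition~\ref{contracting2} merges the two incident edges into a single edge of weight $k_{1,r_1}+k'$. The resulting tree is linear with all vertices in $V_\bullet$, and Example~\ref{ex_of_2crt1} identifies its value as $\zeta^{\sh}_{\wcS,M}(k_{1,1},\ldots,k_{1,r_1-1},k_{1,r_1}+k',k'_1,\ldots,k'_r)=Z^{\sh}_{\wcS,M}(z_{\bk_1}x^{k'}z_{\bk'})$.

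For the inductive step ($s\geq 2$), Lemma~\ref{lem:one-step} gives $\zeta^{}_{\wcS,M}(X;\bk)=\sum_{i=1}^s\zeta^{}_{\wcS,M}(X;\bh_i)$, where $\bh_i$ decreases $k_{i,r_i}$ by $1$ and increases $k'$ by $1$. When $k_{i,r_i}\geq 2$, the index $\bh_i$ remains of the same template shape with total branch weight $N-1$, so the induction hypothesis identifies $\zeta^{}_{\wcS,M}(X;\bh_i)$ with the ``$u_i=x$'' summand via $z_{\bk_i^{-}}\cdot x=z_{\bk_i}$. When $k_{i,r_i}=1$, the edge $e_i$ has weight $0$ in $\bh_i$, and Proposition~\ref{contracting1} contracts it, eliminating $w\in V_\circ$ and promoting $v_{i,r_i}\in V_\bullet$ to a new junction. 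To return to the template shape, we define a template tree $X''$ by introducing a fresh vertex $w''\in V_\circ$ as its junction, reattaching to $w''$ all branches previously attached to $v_{i,r_i}$, and joining $w''$ to $v_{i,r_i}$ by a weight-$0$ edge which becomes the first trunk edge of $X''$; contracting this weight-$0$ edge via Proposition~\ref{contracting1} recovers the contracted tree and preserves the value. Then $X''$ has trunk weight $k''=0$, trunk index $(k'+1,\bk')$, and branches $\{T_j\mid j\neq i\}$ together with the shortened branch of index $(k_{i,1},\ldots,k_{i,r_i-1})$ when $r_i\geq 2$; its total branch weight is $N-1$, so the induction hypothesis yields $Z^{\sh}_{\wcS,M}((\cdots)\cdot z_{(k'+1,\bk')})$, which is the ``$u_i=y$'' summand since $y\cdot x^{k'}z_{\bk'}=z_{k'+1}z_{\bk'}=z_{(k'+1,\bk')}$. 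Summing over $i$ completes the induction.

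The main obstacle is the case $k_{i,r_i}=1$ above: the Proposition~\ref{contracting1} contraction forced by the vanishing attachment weight necessarily places the new junction in $V_\bullet$, so the tree leaves the template shape. The construction of $X''$---pushing the junction back into $V_\circ$ with a fictitious weight-$0$ trunk edge, and checking that the resulting branches and trunk data coincide with those demanded by the ``pop $y$'' term of the shuffle recursion---is the essential combinatorial step.
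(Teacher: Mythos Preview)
Your proof plan is correct and follows essentially the same approach as the paper: induction on the total branch weight, application of Lemma~\ref{lem:one-step} at the junction $w$, and in the $k_{i,r_i}=1$ case a contract-and-reinsert maneuver using Proposition~\ref{contracting1} to restore the template shape. Your reconstructed tree $X''$ is isomorphic to the paper's $X'$ (they simply relabel the merged vertex and the newly inserted trunk vertex differently). The one organizational difference is that you dispose of all $s=1$ cases directly via Proposition~\ref{contracting2} before entering the induction, whereas the paper takes only $\ell=1$ as its base case and lets the $s=1$ cases with larger weight be absorbed into the inductive step; both organizations are valid.
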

%%%%%%%%%%%%%%%%%%%%%%%%%%%%%%%%%%%%%%%%%%%%%%%%%%
\begin{proof}
We prove this theorem by induction on $\ell\coloneqq \sum_{i=1}^s\sum_{j=1}^{r_i}k_{i,j}$. 
When $\ell=1$, the statement reduces to Example \ref{ex_of_2crt1} via Proposition \ref{contracting2}. 
Next we assume $\ell>1$ and that the theorem holds for the case of $\ell-1$. 
Denote by $e_i$ and $e'$ the edges to which the components $k_{i,r_i}$ and $k'$ of the index $\bk$ is attached, 
respectively. Then, from Lemma~\ref{lem:one-step}, we have 
\begin{equation}\label{eq:keyofMT}
\zeta^{}_{\wcS,M}(X;\bk)=\sum_{i=1}^s\zeta^{}_{\wcS,M}(X;\bh_i),
\end{equation}
where $\bh_i$ is the index on $X$ whose $e$-component is
\[
\begin{cases}
k_{i,r_i}-1 & (e=e_i),\\
k'+1 & (e=e'),\\
k_e & (\text{otherwise}).
\end{cases}
\]
Note that the case of $\bk'=\varnothing$ corresponds to the case of $T_0=\{\rt\}$ in Lemma \ref{lem:one-step}. 

We calculate $\zeta^{}_{\wcS,M}(X;\bh_i)$ for each $i$.
If $k_{i,r_i}\geq 2$, we have
\begin{equation}\label{eq:ind1}
\zeta^{}_{\wcS,M}(X;\bh_i)=Z^{\sh}_{\wcS,M}\left(\biggl(\foo_{\substack{a=1 \\ a\neq i}}^sz_{\bk_a}\sh z_{k_{i,1}}\cdots z_{k_{i, r_i-1}}z_{k_{i,r_i}-1}\biggr)x^{k'+1}z_{\bk'}\right)
\end{equation}
by the induction hypothesis. 
Next we consider the case $k_{i,r_i}=1$. In this case, we construct 
a $2$-colored rooted tree $X'$ and an index $\bh'_i$ as indicated in the following diagram: 
\[
\begin{tikzpicture}
%Coordinate
\coordinate  (v_{1,1}) at (-3, -5.3) node at (v_{1,1}) [left] {\tiny $v_{1,1}$};
\coordinate  (v_{1,2}) at (-3, -6.0) node at (v_{1,2}) [left] {\tiny $v_{1,2}$};
\coordinate  (v_{1,r_1-1}) at (-3, -6.8) node at (v_{1,r_1-1}) [left] {\tiny $v_{1,r_1-1}$};
\coordinate  (v_{1,r_1}) at (-3, -7.5) node at (v_{1,r_1}) [left] {\tiny $v_{1,r_1}$};
\coordinate (v_{i,1}) at (0,-4.7) node at (v_{i,1}) [left] {\tiny $v_{i,1}$};
\coordinate (v_{i,2}) at (0,-5.4) node at (v_{i,2}) [left] {\tiny $v_{i,2}$};
\coordinate (v_{i,r_i-2}) at (0,-6.3) node at (v_{i,r_i-2}) [left] {\tiny $v_{i,r_i-2}$};
\coordinate (v_{i,r_i-1}) at (0,-7) node at (v_{i,r_i-1}) [left] {\tiny $v_{i,r_i-1}$};
\coordinate  (v_{s,1}) at (3, -5.3) node at (v_{s,1}) [right] {\tiny $v_{s, 1}$};
\coordinate  (v_{s,2}) at (3, -6.0) node at (v_{s,2}) [right] {\tiny $v_{s, 2}$};
\coordinate  (v_{s,r_s-1}) at (3, -6.8) node at (v_{s,r_s-1}) [right] {\tiny $v_{s, r_s-1}$};
\coordinate  (v_{s,r_s}) at (3, -7.5) node at (v_{s,r_s}) [right] {\tiny $v_{s, r_s}$};
\coordinate (w) at (0,-8) node at (w) [above left] {\tiny $w$};
\coordinate (v_0) at (0,-8.6) node at (v_0) [left] {\tiny $v_0$};
\coordinate (v_1) at (0,-9.2) node at (v_1) [left] {\tiny $v_1$};
\coordinate (v_2) at (0,-9.8) node at (v_2) [left] {\tiny $v_2$};
\coordinate (v_r) at (0,-10.7) node at (v_r) [left] {\tiny $v_r$};
\coordinate (rt) at (0,-11.3) node at (rt) [left] {\tiny $\rt$};
%Lines
\draw              (-3,-6.0) --node [right] {\tiny $k_{1, 1}$} (-3,-5.3);
\draw [dotted] (-3,-6.8) -- (-3,-6.0);
\draw              (-3,-7.5) --node [right] {\tiny $k_{1, r_1-1}$} (-3,-6.8);
\draw              (0,-8) -- node [below] {\tiny $k_{1, r_1}$} (-3,-7.5);
\draw              (0,-5.4) -- node [right] {\tiny $k_{i,1}$} (0,-4.7);
\draw [dotted] (0,-6.3) -- (0,-5.4);
\draw              (0,-7) -- node [right] {\tiny $k_{i, r_i-2}$} (0,-6.3);
\draw              (0,-8) -- node [right] {\tiny $k_{i, r_i-1}$} (0,-7);
\draw              (3,-6.0) --node [left] {\tiny $k_{s, 1}$} (3,-5.3);
\draw [dotted] (3,-6.8) -- (3,-6.0);
\draw              (3,-7.5) --node [left] {\tiny $k_{s, r_s-1}$} (3,-6.8);
\draw              (0,-8) -- node [below] {\tiny $k_{s, r_s}$} (3,-7.5);
\draw [dotted] (-2,-5.7) -- (-0.5,-5.7);
\draw [dotted] (2,-5.7) -- (0.5,-5.7);
\draw              (0,-9.8) -- node [right] {\tiny $k'_1$} (0,-9.2) -- node [right] {\tiny $k'+1$} (0,-8.6) --node [right] {\tiny $0$} (0,-8);
\draw [dotted] (0,-10.7) -- (0,-9.8);
\draw              (0,-11.3) -- node [right] {\tiny $k'_r$} (0,-10.7);
%Black node
\fill (v_{1,1}) circle (2pt) (v_{1,2}) circle (2pt) (v_{1,r_1-1}) circle (2pt) (v_{1,r_1}) circle (2pt);
\fill (v_{i,1}) circle (2pt) (v_{i,2}) circle (2pt) (v_{i,r_i-2}) circle (2pt) (v_{i,r_i-1}) circle (2pt);
\fill (v_{s,1}) circle (2pt) (v_{s,2}) circle (2pt) (v_{s,r_s-1}) circle (2pt) (v_{s,r_s}) circle (2pt);
\fill (v_0) circle (2pt) (v_1) circle (2pt) (v_2) circle (2pt) (v_r) circle (2pt);
\fill (-0.1,-11.4) rectangle (0.1,-11.2);
%White node
\filldraw[fill=white] (w) circle[radius=0.7mm];
\end{tikzpicture}
\]
Here the edge $e_i$ of $X$ is contracted, and a new vertex $v_0$ is inserted between $w$ and $v_1$. 
The components of the index $\bh'_i$ on the edges $\{w,v_0\}$ and $\{v_0,v_1\}$ are 
set to be zero and $k'+1$, respectively. 
Then, according to Proposition~\ref{contracting1} and the induction hypothesis, we have 
\begin{equation}\label{eq:ind3}
\begin{split}
\zeta^{}_{\wcS,M}(X;\bh_i)&=\zeta^{}_{\wcS,M}(X';\bh'_i)\\
&=Z^{\sh}_{\wcS,M}\left(\biggl(\foo_{\substack{a=1 \\ a\neq i}}^s
z_{\bk_a}\sh z_{k_{i,1}}\cdots z_{k_{i, r_i-1}}\biggr)x^0z_{k'+1}z_{\bk'}\right)\\
&=Z^{\sh}_{\wcS,M}\left(\biggl(\foo_{\substack{a=1 \\ a\neq i}}^s
z_{\bk_a}\sh z_{k_{i,1}}\cdots z_{k_{i,r_i-1}}\biggr)yx^{k'}z_{\bk'}\right).
\end{split}
\end{equation}
By combining \eqref{eq:keyofMT}, \eqref{eq:ind1}, \eqref{eq:ind3} and the definition of the shuffle product, 
we have \eqref{eq: special case formula}.
\end{proof}
%%%%%%%%%%%%%%%%%%%%%%%%%%%%%%%%%%%%%%%%%%%%%%%%%%

\begin{remark}
The value $\zeta^{}_{\wcS,M}$ which appears in Theorem \ref{thm: explicit calc} may be regarded as 
an analogue of MZVs studied by Umezawa \cite{U}. 
\end{remark}

%%%%%%%%%%%%%%%%%%%%%%%%%%%%%%%%%%%%%%%%%%%%%%%%%%
%%%%%%%%%%%%%%%%%%%%%%%%%%%%%%%%%%%%%%%%%%%%%%%%%%
%Subsection 3.3
%%%%%%%%%%%%%%%%%%%%%%%%%%%%%%%%%%%%%%%%%%%%%%%%%%
\subsection{The shuffle relation for ($\sh$-truncated) $\wcS$-MZVs.}\label{subsec3.3}
In this subsection, we give a proof of the identity \eqref{truncated shuffle rel}, i.e., 
the shuffle relation for $\sh$-truncated $\wcS$-MZVs as an application of the basic properties in the previous subsection.

By specializing at $t=0$, this also gives a new proof of the shuffle relation for $\cS$-MZVs, 
which was proved by Kaneko--Zagier \cite{KZ}, Jarossay \cite[Th\'{e}or\`{e}me 1.7 i)]{J1} 
and Hirose \cite[Proposition 15]{Hi}. In fact, this is completely parallel to 
the proof of the shuffle relation for $\cA$-MZVs given by the first author \cite{O}. 
%%%%%%%%%%%%%%%%%%%%%%%%%%%%%%%%%%%%%%%%%%%%%%%%%%
\begin{theorem}\label{thm:truncated shuffle}
For indices $\bk$, $\bl$, we have 
\begin{equation}\label{sh for M}
\zeta^{\sh}_{\wcS,M}(\bk \sh \bl)=(-1)^{\wt(\bl)}\sum_{\bl'\in \bZ^{\dep(\bl)}_{\geq0}}b\binom{\bl}{\bl'}\zeta^{\sh}_{\wcS,M}(\bk, \overline{\bl+\bl'}){t}^{\wt(\bl')}.
\end{equation}
\end{theorem}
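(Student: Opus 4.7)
The plan is to realise both sides of the identity as the value $\zeta^{}_{\wcS,M}(X;-)$ of a concrete 2-colored rooted tree $X$, converting between them using the results of \S\ref{subsec3.2}. This parallels the strategy used in \cite{O} for the shuffle relation of $\cA$-MZVs, now adapted to the $t$-adic truncated setting.

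First, let $X_1$ be the Y-shaped tree with two upper arms meeting at a central vertex $w\in V_{\circ}$: one arm carries weights $k_1,\dots,k_r$ (from top down to $w$) and ends at a terminal $v_{\bk}\in V_{\bullet}$, and the other carries $l_1,\dots,l_s$ and ends at $v_{\bl}\in V_{\bullet}$; below $w$ attach a single edge of weight $0$ to the root $\rt\in V_{\bullet}$. Applying Theorem~\ref{thm: explicit calc} with $s=2$, $\bk_1=\bk$, $\bk_2=\bl$, $k'=0$ and $\bk'=\varnothing$ gives
\[
\zeta^{}_{\wcS,M}(X_1;-)=Z^{\sh}_{\wcS,M}(z_{\bk}\sh z_{\bl})=\zeta^{\sh}_{\wcS,M}(\bk\sh\bl),
\]
so $X_1$ realises the left-hand side of the desired identity.

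Next, apply Proposition~\ref{contracting1} to contract the weight-$0$ edge $\{\rt,w\}$ (legitimate since $w\in V_{\circ}\setminus\{\rt\}$). Contraction deletes $w$ and re-attaches the bottom edges of the two arms directly to $\rt$, so the resulting tree $X_1'$ is a \emph{linear chain} of $r+s+1$ vertices, all in $V_{\bullet}$, with $\rt$ sitting at an interior vertex:
\[
v_{\bk}-w_{\bk,1}-\dots-w_{\bk,r-1}-\rt-u_{s-1}-\dots-u_1-v_{\bl},
\]
and Proposition~\ref{contracting1} yields $\zeta^{}_{\wcS,M}(X_1;-)=\zeta^{}_{\wcS,M}(X_1';-)$. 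Now apply Proposition~\ref{change of root} to move the root of $X_1'$ from the interior vertex $\rt$ to the far end $v_{\bl}$: the path $P(\rt,v_{\bl})$ traverses exactly the $s$ $\bl$-arm edges carrying weights $l_1,\dots,l_s$, producing the sign $(-1)^{\wt(\bl)}$, the binomial factor $b\binom{\bl}{\bl'}$, and the power $t^{\wt(\bl')}$. Hence
\[
\zeta^{}_{\wcS,M}(X_1';-)=(-1)^{\wt(\bl)}\sum_{\bl'\in\bZ_{\geq0}^{s}}b\binom{\bl}{\bl'}\,\zeta^{}_{\wcS,M}(X_2';-)\,t^{\wt(\bl')},
\]
where $X_2'$ denotes $X_1'$ with root $v_{\bl}$ and the $\bl$-arm edge weights replaced by $l_j+l'_j$.

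Finally, $X_2'$ is a linear chain with its root at one endpoint, so Example~\ref{ex_of_2crt1} applies directly. Reading the edge weights from the top $v_{\bk}$ down to the root $v_{\bl}$ gives the index $(k_1,\dots,k_r,l_s+l'_s,l_{s-1}+l'_{s-1},\dots,l_1+l'_1)=(\bk,\overline{\bl+\bl'})$, whence $\zeta^{}_{\wcS,M}(X_2';-)=\zeta^{\sh}_{\wcS,M}(\bk,\overline{\bl+\bl'})$. Chaining the four equalities yields the desired identity. The proof therefore reduces to three structural moves on trees (realise via Theorem~\ref{thm: explicit calc}, contract via Proposition~\ref{contracting1}, change root via Proposition~\ref{change of root}) followed by an immediate identification; the main creative step is the choice of $X_1$ with the auxiliary weight-$0$ edge below $w$, which is precisely what allows Theorem~\ref{thm: explicit calc} to produce the LHS and Proposition~\ref{contracting1} to straighten the tree afterwards into a linear chain whose subsequent root-change lands on the target index.
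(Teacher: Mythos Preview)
Your proof is correct and follows essentially the same approach as the paper's. The paper starts with the contracted V-shaped tree (your $X_1'$) as its $X_1$, applies Proposition~\ref{change of root} directly, and then invokes Proposition~\ref{contracting1} together with Theorem~\ref{thm: explicit calc} to identify the left-hand side; you instead begin with the uncontracted Y-shaped tree, apply Theorem~\ref{thm: explicit calc} first, then contract, then change root---but since Proposition~\ref{contracting1} makes the two trees interchangeable, this is only a reordering of the same three structural moves.
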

%%%%%%%%%%%%%%%%%%%%%%%%%%%%%%%%%%%%%%%%%%%%%%%%%%
\begin{proof}
We write $\bk=(k_1, \ldots, k_r)$, $\bl=(l_1, \ldots, l_s)$, 
and consider two $2$-colored rooted trees $X_1$ and $X_2$ and the index $\bk'=(k'_e)$ on them 
defined as in the following diagrams: 
\[
\begin{tikzpicture}
%Coordinates
\coordinate (v_1) at (-1.2, 2.6) node at (v_1) [left] {\tiny $v_1$};
\coordinate (v_2) at (-1.2,2.0) node at (v_2) [left] {\tiny $v_2$};
\coordinate (v_{r-1}) at (-1.2,1.2) node at (v_{r-1}) [left] {\tiny $v_{r-1}$};
\coordinate (v_r) at (-1.2,0.6) node at (v_r) [left] {\tiny $v_r$};
\coordinate (v'_1) at (1.2,2.6) node at (v'_1) [right] {\tiny $v'_1$};
\coordinate (v'_2) at (1.2,2.0) node at (v'_2) [right] {\tiny $v'_2$};
\coordinate (v'_{s-1}) at (1.2,1.2) node at (v'_{s-1}) [right] {\tiny $v'_{s-1}$};
\coordinate (v'_s) at (1.2,0.6) node at (v'_s) [right] {\tiny $v'_s$};
\coordinate (v) node at (0,0) [below] {\tiny $v$};
%Lines
\draw              (-1.2,2.0) -- node [right] {\tiny $k_1$} (-1.2,2.6);
\draw [dotted] (-1.2,1.2) -- (-1.2,2.0);
\draw              (-1.2,0.6) -- node [right] {\tiny $k_{r-1}$} (-1.2,1.2);
\draw              (0,0) -- node [below] {\tiny $k_r$} (-1.2,0.6);
\draw              (1.2,2.0) -- node [left] {\tiny $l_1$} (1.2,2.6);
\draw [dotted] (1.2,1.2) -- (1.2,2.0);
\draw              (1.2,0.6) -- node [left] {\tiny $l_{s-1}$} (1.2,1.2);
\draw              (0,0) -- node [below] {\tiny $l_s$} (1.2,0.6);
%Black nodes
\fill (v_1) circle (2pt) (v_2) circle (2pt) (v_{r-1}) circle (2pt) (v_r) circle (2pt);
\fill (v'_1) circle (2pt) (v'_2) circle (2pt) (v'_{s-1}) circle (2pt) (v'_s) circle (2pt);
\fill (-0.1,-0.1) rectangle (0.1,0.1);
\draw node at (0,-1) {$X_1$};
\end{tikzpicture}
\qquad 
\begin{tikzpicture}
%Coordinates
\coordinate (v_1) at (-1.2,2.6) node at (-1.2,2.6) [left] {\tiny $v_1$};
\coordinate (v_2) at (-1.2,2.0) node at (-1.2,2.0) [left] {\tiny $v_2$};
\coordinate (v_{r-1}) at (-1.2,1.2) node at (-1.2,1.2) [left] {\tiny $v_{r-1}$};
\coordinate (v_r) at (-1.2,0.6) node at (-1.2,0.6) [left] {\tiny $v_r$};
\coordinate (v'_1) at (1.2,2.6) node at (1.2,2.6) [right] {\tiny $v'_1$};
\coordinate (v'_2) at (1.2,2.0) node at (1.2,2.0) [right] {\tiny $v'_2$};
\coordinate (v'_{s-1}) at (1.2,1.2) node at (1.2,1.2) [right] {\tiny $v'_{s-1}$};
\coordinate (v'_s) at (1.2,0.6) node at (1.2,0.6) [right] {\tiny $v'_s$};
\coordinate (v) at (0,0) node at (0,0) [below] {\tiny $v$};
%Lines
\draw              (-1.2,2.0) -- node [right] {\tiny $k_1$} (-1.2,2.6);
\draw [dotted] (-1.2,1.2) -- (-1.2,2.0);
\draw              (-1.2,0.6) -- node [right] {\tiny $k_{r-1}$} (-1.2,1.2);
\draw              (0,0) -- node [below] {\tiny $k_r$} (-1.2,0.6);
\draw              (1.2,2.0) -- node [left] {\tiny $l_1$} (1.2,2.6);
\draw [dotted] (1.2,1.2) -- (1.2,2.0);
\draw              (1.2,0.6) -- node [left] {\tiny $l_{s-1}$} (1.2,1.2);
\draw              (0,0) -- node [below] {\tiny $l_s$} (1.2,0.6);
%Black nodes
\fill (v_1) circle (2pt) (v_2) circle (2pt) (v_{r-1}) circle (2pt) (v_r) circle (2pt);
\fill (1.1,2.5) rectangle (1.3,2.7);
\fill (v'_2) circle (2pt) (v'_{s-1}) circle (2pt) (v'_s) circle (2pt);
\fill (v) circle (2pt);
\draw node at (0,-1) {$X_2$};
\end{tikzpicture}
\]
Since they are distinct only in their roots, Proposition~\ref{change of root} shows 
\begin{multline}\label{eq:sh-app0}
\zeta^{}_{\wcS, M}(X_1; \bk')
=(-1)^{\sum_{e \in P(v, v'_1)}k'_e}\sum_{\bl'=(l'_e)\in\bZ_{\geq0}^{P(v, v'_1)}}
\left[\prod_{e\in P(v, v'_1)}\binom{k'_e+l'_e-1}{l'_e}\right]\\
\times\zeta^{}_{\wcS, M}(X_2; \bk'\oplus\bl')t^{\sum_{e \in P(v, v'_1)}l'_e}. 
\end{multline}
By using Proposition~\ref{contracting1} and applying Theorem~\ref{thm: explicit calc} to $(X_1,\bk')$, we have
\begin{equation}\label{eq:sh-app1}
\zeta^{}_{\wcS,M}(X_1;\bk')=\zeta^{\sh}_{\wcS,M}(\bk\sh\bl).
\end{equation}
On the other hand, by Example~\ref{ex_of_2crt1}, we see that the right hand side of  \eqref{eq:sh-app0} coincides with
\begin{equation}\label{eq:sh-app2}
(-1)^{\wt(\bl)}\sum_{\bl'\in \bZ^{\dep(\bl)}_{\geq0}}b\binom{\bl}{\bl'}
\zeta^{\sh}_{\wcS,M}(\bk, \overline{\bl+\bl'}){t}^{\wt(\bl')}.
\end{equation}
By combining \eqref{eq:sh-app0} and \eqref{eq:sh-app1} with \eqref{eq:sh-app2}, we have the desired formula.
\end{proof}
%%%%%%%%%%%%%%%%%%%%%%%%%%%%%%%%%%%%%%%%%%%%%%%%%%
\begin{corollary}
For any indices $\bk$ and $\bl$, we have
\[
\zeta^{\sh}_{\wcS}(\bk \sh \bl)=(-1)^{\wt(\bl)}\sum_{\bl'\in \bZ^{\dep(\bl)}_{\geq0}}b\binom{\bl}{\bl'}\zeta^{\sh}_{\wcS}(\bk, \overline{\bl+\bl'}){t}^{\wt(\bl')}.
\]
\end{corollary}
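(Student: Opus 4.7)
The plan is to derive the corollary by taking the limit $M\to\infty$ in Theorem~\ref{thm:truncated shuffle} and invoking the series expression of Theorem~\ref{main1}. First I would fix a positive integer $M$ and apply Theorem~\ref{thm:truncated shuffle} to obtain the identity in $\bQ\jump{t}$:
\[
\zeta^{\sh}_{\wcS,M}(\bk \sh \bl)=(-1)^{\wt(\bl)}\sum_{\bl'\in \bZ^{\dep(\bl)}_{\geq0}}b\binom{\bl}{\bl'}\zeta^{\sh}_{\wcS,M}(\bk, \overline{\bl+\bl'}){t}^{\wt(\bl')}.
\]
Then the task is to pass to the limit coefficient by coefficient in $t$ and identify both sides with the asserted $\wcS$-MZV expressions.

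For the left-hand side, $z_{\bk}\sh z_{\bl}$ is a finite $\bZ$-linear combination of monomials $z_{\bh'}\in\frH^1$, so $\zeta^{\sh}_{\wcS,M}(\bk\sh\bl)$ is a finite sum of $\sh$-truncated $\wcS$-MZVs. Each of these converges coefficientwise to the corresponding $\wcS$-MZV by Theorem~\ref{main1}, hence the left-hand side tends to $\zeta^{\sh}_{\wcS}(\bk\sh\bl)$. For the right-hand side, I would fix $n\geq 0$ and observe that $\zeta^{\sh}_{\wcS,M}(\bk,\overline{\bl+\bl'})\in\bQ\jump{t}$ has non-negative $t$-adic valuation, and is multiplied by $t^{\wt(\bl')}$; therefore only finitely many $\bl'\in\bZ^{\dep(\bl)}_{\geq 0}$ (those with $\wt(\bl')\leq n$) can contribute to the coefficient of $t^n$. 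For each such $\bl'$, Theorem~\ref{main1} ensures that the coefficient of $t^{n-\wt(\bl')}$ of $\zeta^{\sh}_{\wcS,M}(\bk,\overline{\bl+\bl'})$ converges as $M\to\infty$ to the corresponding coefficient of $\zeta^{\sh}_{\wcS}(\bk,\overline{\bl+\bl'})$. Summing these finitely many convergent contributions yields the coefficient of $t^n$ on the right-hand side of the desired identity.

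No serious obstacle appears in this derivation: both key ingredients, namely the truncated shuffle identity of Theorem~\ref{thm:truncated shuffle} and the series expression of Theorem~\ref{main1}, are already in hand. The only subtlety worth a line of care is the interchange of the infinite sum over $\bl'$ with the limit $M\to\infty$, which is justified by the above observation that in each fixed $t$-degree the sum is effectively finite.
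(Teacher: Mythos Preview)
Your proposal is correct and follows exactly the paper's own proof: take the limit $M\to\infty$ in the truncated shuffle relation \eqref{sh for M} and apply Theorem~\ref{lim}. Your extra care in justifying the coefficientwise limit (finitely many $\bl'$ contribute in each $t$-degree) is a welcome elaboration of what the paper leaves implicit.
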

%%%%%%%%%%%%%%%%%%%%%%%%%%%%%%%%%%%%%%%%%%%%%%%%%%
\begin{proof}
Take the limit $M\to\infty$ in \eqref{sh for M} and use Theorem~\ref{lim}. 
\end{proof}
%%%%%%%%%%%%%%%%%%%%%%%%%%%%%%%%%%%%%%%%%%%%%%%%%%
\subsection{Representation algorithm}\label{subsec3.4}
In this subsection, we prove the following: 
\begin{theorem}\label{thm:FT}
Let $X=(V, E, \rt, V_\bullet)$ be a $2$-colored rooted tree such that $\rt\in V_\bullet$  
and $\bk=(k_e)_{e\in E}$ an essentially positive index on $X$.
Then, there exists $w\in\frH^1$ such that
\[
\zeta^{}_{\wcS, M}(X; \bk)=Z^{\sh}_{\wcS,M}(w)
\]
holds.
\end{theorem}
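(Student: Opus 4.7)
The plan is to prove Theorem \ref{thm:FT} by induction on a suitable complexity measure of the tree $X$, reducing to the shape handled by Theorem \ref{thm: explicit calc}. The principal tools are Propositions \ref{contracting1}, \ref{contracting2}, and \ref{change of root}, together with Theorem \ref{thm: explicit calc} as the main reduction engine; the base case is the linear tree treated in Example \ref{ex_of_2crt1}.

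First I would normalize $X$ using Proposition \ref{contracting2} to contract every degree-$2$ vertex in $V_\circ \setminus \{\rt\}$, so that each non-root $V_\circ$ vertex has degree at least $3$. Then I would identify an \emph{innermost} $V_\circ$ vertex $w$, meaning one such that the component of $X$ obtained by removing the edge from $w$ toward $\rt$ contains no other $V_\circ$ vertex. The branches hanging from $w$ on the side away from $\rt$ are therefore purely in $V_\bullet$. If those branches are already linear chains in $V_\bullet$, the tree has exactly the local shape of Theorem \ref{thm: explicit calc}, and applying that theorem absorbs $w$ together with its upper branches into a single $Z^{\sh}_{\wcS, M}$-expression; iterating on the remaining tree, which has strictly fewer $V_\circ$ vertices, closes the induction.

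When the branches above $w$ are not linear, i.e., contain $V_\bullet$ vertices of degree $\geq 3$, an extra reduction is needed. Here I would use Proposition \ref{change of root} to shift the root temporarily to a vertex inside one of these branches, so that the local configuration becomes amenable to Theorem \ref{thm: explicit calc}. Since change of root only introduces an explicit $\bQ\jump{t}$-combination of tree values, this yields a formula expressing $\zeta^{}_{\wcS, M}(X; \bk)$ as a $\bQ\jump{t}$-linear combination of values of strictly simpler trees. Iterating through the induction, the final result is a $\bQ\jump{t}$-combination of values $Z^{\sh}_{\wcS, M}(z_{\bh})$ for various indices $\bh$, and hence equals $Z^{\sh}_{\wcS, M}(w)$ for some $w \in \frH^1$.

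The main obstacle will be defining the correct complexity measure (likely a lexicographic pair such as the number of $V_\circ$ vertices together with the number of $V_\bullet$ vertices of degree $\geq 3$) and verifying that each reduction step strictly decreases it; the combinatorics of how Theorem \ref{thm: explicit calc} interacts with the change-of-root formula is delicate, and one must rule out cycles where a reduction reintroduces structure just removed. The essential-positivity hypothesis on $\bk$ will play a role in keeping Propositions \ref{contracting1} and \ref{contracting2} applicable at each step, in particular ensuring that any zero-weight edges created by change of root end up at positions where Proposition \ref{contracting1} can safely contract them.
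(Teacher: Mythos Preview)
Your overall induction strategy is close to the paper's, but there is a genuine gap in the step where you invoke Proposition~\ref{change of root}.  That proposition expresses $\zeta^{}_{\wcS,M}(X_1;\bk)$ as an \emph{infinite} $t$-power series whose coefficients are values $\zeta^{}_{\wcS,M}(X_2;\bk\oplus\bl)$.  Hence after one change of root you have a $\bQ\jump{t}$-linear combination of tree values, and after iterating you get an expression of the form $\sum_{n\ge 0} Z^{\sh}_{\wcS,M}(w_n)\,t^n$ with $w_n\in\frH^1$.  But $Z^{\sh}_{\wcS,M}\colon\frH^1\to\bQ\jump{t}$ is only $\bQ$-linear, not $\bQ\jump{t}$-linear, so such a series is in general \emph{not} of the form $Z^{\sh}_{\wcS,M}(w)$ for a single $w\in\frH^1$; your sentence ``hence equals $Z^{\sh}_{\wcS, M}(w)$ for some $w \in \frH^1$'' does not follow.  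In fact the paper reserves Proposition~\ref{change of root} precisely for the case $\rt\notin V_\bullet$, where one only obtains the weaker conclusion $\zeta^{}_{\wcS,M}(X;\bk)=\sum_n Z^{\sh}_{\wcS,M}(w_n)t^n$ (see the paragraph before Definition~\ref{def:M-->oo}).

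The paper avoids this problem entirely by never using change of root in the proof of Theorem~\ref{thm:FT}.  The obstruction you identified---a $V_\bullet$ vertex of degree $\ge 3$---is handled instead by the \emph{inverse} of Proposition~\ref{contracting1}: at such a black branched point $v$ one inserts a new white vertex $v'$ and an edge $\{v,v'\}$ of weight~$0$, redirecting the children of $v$ to $v'$.  Essential positivity ensures that the children's edges still carry positive weight, so condition~(H4) is preserved.  Together with your step using Proposition~\ref{contracting2} (and a similar insertion if $\rt$ is not terminal), this puts $(X,\bk)$ into a \emph{harvestable} normal form $(X_\h,\bk_\h)$ with the same $\zeta^{}_{\wcS,M}$-value (Proposition~\ref{hv_form}).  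For harvestable pairs the paper then defines $w(X_\h,\bk_\h)\in\frH^1$ recursively (Definition~\ref{def: w(X,k)}) and shows $\zeta^{}_{\wcS,M}(X_\h;\bk_\h)=Z^{\sh}_{\wcS,M}\bigl(w(X_\h,\bk_\h)\bigr)$ by applying the argument of Theorem~\ref{thm: explicit calc} at each white branched point (Theorem~\ref{main theorem}).  Your ``innermost white vertex'' picture is essentially this last step, but it only works cleanly once the harvestable form has been reached.
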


Here the \emph{essential positivity} of an index is defined as follows. 

%%%%%%%%%%%%%%%%%%%%%%%%%%%%%%%%%%%%%%%%%%%%%%%%%%
\begin{definition}
Let $X=(V, E, \rt, V_\bullet)$ be a $2$-colored rooted tree. An index $\bk$ on $X$ is said 
\emph{essentially positive} when the sum $\sum_{e \in P(v_1, v_2)}k_e$ is positive 
for any two distinct vertices  $v_1, v_2\in V_\bullet$.
\end{definition}
%%%%%%%%%%%%%%%%%%%%%%%%%%%%%%%%%%%%%%%%%%%%%%%%%%

We will give an algorithm to construct the element $w\in\frH^1$ in Theorem \ref{thm:FT}. 
First let us suppose a stronger condition. 

%%%%%%%%%%%%%%%%%%%%%%%%%%%%%%%%%%%%%%%%%%%%%%%%%%
\begin{definition}
Let $X=(V, E, \rt, V_\bullet)$ be a $2$-colored rooted tree and $\bk=(k_e)_{e\in E}$ an index on $X$.
The pair $(X, \bk)$ is called \emph{harvestable} if the following conditions hold:
\begin{description}
\item[(H1)] The root $\rt$ is a terminal of $(V,E)$. In particular, $\rt$ is in $V_\bullet$.
\item[(H2)] All elements of $V_\circ$ are branched points.
\item[(H3)] All elements of $V_\bullet$ are not branched points.
\item[(H4)] If $v\in V_\circ$ is the parent of $w\in V$, then $k_{\{v,w\}}$ is positive. 
\item[(H5)] If $v,w\in V_\bullet$ and $\{v,w\}\in E$, then $k_{\{v,w\}}$ is positive. 
\end{description}
Here, a \emph{branched point} is a vertex of degree at least $3$ and 
the \emph{parent} of a vertex $v\neq \rt$ is the unique vertex $p$ satisfying $\{v,p\}\in P(\rt,v)$. 
\end{definition}
%%%%%%%%%%%%%%%%%%%%%%%%%%%%%%%%%%%%%%%%%%%%%%%%%%

\begin{remark}
The notions of harvestablity and essential positivity are introduced by the first author \cite{O}, 
but we add the condition (H5) here. 
Note that, for a pair $(X,\bk)$ satisfying the conditions from (H1) to (H4), 
it satisfies (H5) if and only if $\bk$ is essentially positive. 
In particular, a harvestable pair satisfies the condition of Theorem \ref{thm:FT}. 
\end{remark}

\begin{definition}\label{def: w(X,k)}
For a harvestable pair $(X, \bk)$, we define an element $w(X,\bk)$ in $\frH^1$ recursively as follows. 
\begin{enumerate}
\item For $(X,\bk)$ of the following form, we define $w(X,\bk)\coloneqq z_{k_1}\cdots z_{k_r}$. 
\[
\begin{tikzpicture}
%Coordinates
\coordinate (v_1) at (0,2.0) node at (v_1) {};
\coordinate (v_2) at (0,1.4) node at (v_2) {};
\coordinate (v_r) at (0,0.6) node at (v_r) {};
\coordinate (v_{r+1}) at (0,0) node at (v_{r+1}) {};
%Lines
\draw              (0,1.4) -- node [right] {\tiny $k_1$} (0,2.0);
\draw [dotted] (0,0.6) -- (0,1.4);
\draw              (0,0) -- node [right] {\tiny $k_r$} (0,0.6);
%Black nodes
\fill (v_1) circle (2pt) (v_2) circle (2pt) (v_r) circle (2pt);
\fill (-0.1,-0.1) rectangle (0.1,0.1);
\end{tikzpicture}
\]
\item Let $(X,\bk)$ (resp.~$(X_j,\bk_j)$) be given by the left hand side (resp.~the right hand side) of the following diagrams. 
\[
\begin{tikzpicture}
%Coordinates
\coordinate (T_1) at (-1.5,4) node at (T_1) {\tiny $T_1$};
\coordinate (T_j) at (-0.4,4.5) node at (T_j) {\tiny $T_j$};
\coordinate (T_s) at (1.5,4) node at (T_s) {\tiny $T_s$};
\coordinate (v_0) at (0,2.6) node at (0,2.6) {};
\coordinate (v_1) at (0,2.0) node at (0,2.0) {};
\coordinate (v_2) at (0,1.4)  node at (0,1.4) {};
\coordinate (v_r) at (0,0.6) node at (0,0.6) {};
\coordinate (v_{r+1}) at (0,0.0) node at (0,0) {};
%Lines
\draw              (-1.4,3.7) to [out=290,in=170] node [left] {\tiny $l_1$} (v_0);
\draw [dotted] (-1.21,4.2) -- (-0.75,4.45);
\draw              (-0.4,4.19) -- node [right] {\tiny $l_j$} (v_0);
\draw [dotted] (-0.05,4.5) -- (1.21,4.2);
\draw              (1.4,3.7) to [out=250, in=10] node [right] {\tiny $l_s$} (v_0);
\draw              (v_0) -- node [left] {\tiny $k'$} (v_1);
\draw              (v_1) -- node [left] {\tiny $k_1$} (v_2);
\draw [dotted] (v_2) -- (v_r);
\draw              (v_r) -- node [left] {\tiny $k_r$} (v_{r+1});
\draw (T_1) circle [radius=9pt];
\draw (T_j) circle [radius=9pt];
\draw (T_s) circle [radius=9pt];
%Black nodes
\fill (v_1) circle (2pt) (v_2) circle (2pt) (v_r) circle (2pt);
\fill (-0.1,-0.1) rectangle (0.1,0.1);
%White node
\filldraw[fill=white] (v_0) circle [radius=0.7mm];
\end{tikzpicture}
\hspace{2cm}
\begin{tikzpicture}
%Coordinates
\coordinate (T_j) at (0,1.5) node at (T_j) {\tiny $T_j$};
\coordinate (rt) at (0,0.0) node at (0,0) {};
%Lines
\draw (0,1) -- node [left] {\tiny $l_j$} (rt);
\draw (T_j) circle [radius=14pt];
%Black nodes
\fill (-0.1,-0.1) rectangle (0.1,0.1);
\end{tikzpicture}
\]
Assume that $w_j=w(X_j,\bk_j)$ for $j=1,\ldots,s$ are already defined. Then we define 
\[w(X,\bk)\coloneqq (w_1\sh\cdots\sh w_s)x^{k'}z_{k_1}\cdots z_{k_r}. \]
\end{enumerate}
In fact, we see that this procedure exhausts all harvestable pairs $(X,\bk)$ 
by induction on the cardinal of $V_\circ$. 
\end{definition}

The next theorem generalizes Theorem~\ref{thm: explicit calc} to the harvestable case. 
%We omit the proof since it is essentially the same as the proof of Theorem~\ref{thm: explicit calc}. 
%%%%%%%%%%%%%%%%%%%%%%%%%%%%%%%%%%%%%%%%%%%%%%%%%%
\begin{theorem}\label{main theorem}
For any harvestable pair $(X,\bk)$, we have 
\begin{equation}\label{eq:ind4}
\zeta^{}_{\wcS, M}(X; \bk)=Z^{\sh}_{\wcS, M}\bigl(w(X,\bk)\bigr).
\end{equation}
\end{theorem}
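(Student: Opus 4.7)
The plan is to induct on $|V_\circ|$, the number of white vertices. The base case $|V_\circ|=0$ is immediate: (H2) and (H3) force $X$ to be a linear chain entirely in $V_\bullet$, so $w(X,\bk)=z_\bk$ by Definition~\ref{def: w(X,k)}(1) and the identity is just Example~\ref{ex_of_2crt1}.

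For $|V_\circ|\geq 1$, (H1) places $\rt$ at a terminal and (H2) puts every branched point in $V_\circ$, so walking from $\rt$ one reaches a first branched point $v_0\in V_\circ$ and the local picture matches Definition~\ref{def: w(X,k)}(2): subtrees $T_1,\ldots,T_s$ above $v_0$ (attached by edges of weight $l_j>0$, by (H4)) and a stem $v_0,v_1,\ldots,v_{r+1}=\rt$ below (weights $k',k_1,\ldots,k_r$). The pairs $(X_j,\bk_j)$ built in that definition are harvestable with $|V_\circ|-1$ white vertices each, so the induction hypothesis delivers $\zeta^{}_{\wcS,M}(X_j;\bk_j)=Z^{\sh}_{\wcS,M}(w_j)$ for $w_j=w(X_j,\bk_j)$. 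Expanding $w_j=\sum_{\bh^{(j)}}c^{(j)}_{\bh^{(j)}}z_{\bh^{(j)}}$ as a $\bQ$-linear combination of monomials (each $\bh^{(j)}$ non-empty, because the rules of Definition~\ref{def: w(X,k)} keep $w_j$ inside $y\frH$), the conclusion would follow from a substitution principle
\[
\zeta^{}_{\wcS,M}(X;\bk)=\sum_{\bh^{(1)},\ldots,\bh^{(s)}}\!\!\Biggl(\prod_{j=1}^{s}c^{(j)}_{\bh^{(j)}}\Biggr)\zeta^{}_{\wcS,M}\bigl(X^{[\bh]};\bh^{(1)},\ldots,\bh^{(s)},k',k_1,\ldots,k_r\bigr),
\]
where $X^{[\bh]}$ is the single-branched-point tree obtained from $X$ by replacing each $T_j$ with a linear chain of index $\bh^{(j)}$ --- exactly the shape handled by Theorem~\ref{thm: explicit calc}. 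Applying that theorem term-by-term and invoking $\bQ$-linearity of $Z^{\sh}_{\wcS,M}$ together with Definition~\ref{def: w(X,k)}(2) then rewrites the right-hand side as $Z^{\sh}_{\wcS,M}(w(X,\bk))$.

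The main obstacle is the substitution principle itself. I would prove it by splitting the defining sum of $\zeta^{}_{\wcS,M}(X;\bk)$ according to (i) the region --- the stem, or one $V_\bullet(T_j)$ --- containing the distinguished vertex $u$, and (ii) the aggregated masses $S_j\coloneqq\sum_{v\in V_\bullet(T_j)}(m_v+\delta_{u,v}t)$, which are precisely the values of $L_{e_j}$. Once $u$ and $(S_j)$ are fixed, the block $\prod_{e\in T_j\cup\{e_j\}}L_e^{-k_e}$ depends only on the internal data of $T_j$ and on whether $u\in V_\bullet(T_j)$, and it matches the block appearing in $\zeta^{}_{\wcS,M}(X_j;\bk_j)$ when the new root of $X_j$ is assigned the value $m_0=-S_j$. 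The induction hypothesis therefore replaces each $T_j$-block by the same $\bQ\jump{t}$-linear combination (with coefficients $c^{(j)}_{\bh^{(j)}}$) of linear-chain blocks, while the stem factors and $L_{e'}=\sum_j S_j$ are left untouched; assembling across $j$ produces the substitution principle.
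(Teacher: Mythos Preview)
Your strategy---inducting on $|V_\circ|$ and reducing to the single-branched-point case of Theorem~\ref{thm: explicit calc} via a ``substitution principle''---is different from the paper's, which simply repeats at every branched point the weight-reducing partial-fraction step of Lemma~\ref{lem:one-step} (as in the proof of Theorem~\ref{thm: explicit calc}). Your route would be pleasantly modular if it worked, but the justification of the substitution principle has a real gap. You correctly observe that, once the location of $u$ and the aggregates $S_j=L_{e_j}$ are fixed, the $T_j$-block in the summand of $\zeta^{}_{\wcS,M}(X;\bk)$ coincides with the summand of $\zeta^{}_{\wcS,M}(X_j;\bk_j)$ on the slice where the new root of $X_j$ carries the value $-S_j$. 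The unjustified step is the next sentence: ``the induction hypothesis therefore replaces each $T_j$-block by the same $\bQ\jump{t}$-linear combination of linear-chain blocks.'' The induction hypothesis $\zeta^{}_{\wcS,M}(X_j;\bk_j)=Z^{\sh}_{\wcS,M}(w_j)$ is a \emph{single scalar} identity in $\bQ\jump{t}$, obtained by summing over \emph{all} $u'\in V_\bullet(X_j)$ and \emph{all} admissible $(m'_v)$; it says nothing about individual $S_j$-slices. Knowing that two sums $\sum_S A(S)$ and $\sum_S A'(S)$ agree (roughly the content of the hypothesis) does not yield $A(S)=A'(S)$ for each $S$, yet that pointwise statement is exactly what you need in order to swap the $T_j$-block for linear-chain blocks inside the larger sum over stem variables and the remaining $S_{j'}$.

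The slice-by-slice identity you implicitly use is in fact true, but establishing it requires the pointwise partial-fraction identity behind Lemma~\ref{lem:one-step}, not the scalar equality supplied by your induction hypothesis. You can repair the argument either by strengthening the statement being proved by induction (assert equality for each fixed value of the root variable and each choice of whether the distinguished vertex sits at the root or inside the tree), or by dropping the substitution principle and applying Lemma~\ref{lem:one-step} directly at each branched point with an induction on total weight---which is precisely what the paper does. A minor aside: each $(X_j,\bk_j)$ has $|V_\circ\cap T_j|$ white vertices, not necessarily $|V_\circ|-1$; this is still strictly smaller than $|V_\circ|$, so your induction scheme itself is sound.
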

%%%%%%%%%%%%%%%%%%%%%%%%%%%%%%%%%%%%%%%%%%%%%%%%%%
\begin{proof}
The equation is given in Example \ref{ex_of_2crt1} if $X$ has no branched point, 
and in Theorem \ref{thm: explicit calc} if there is just one branched point. 
In the general case, one obtains the equation by making a computation similar to 
the proof of Theorem \ref{thm: explicit calc} for each branched point. 
\end{proof}

The following proposition says that any $2$-colored rooted tree with the root in $V_{\bullet}$ and an essentially positive index 
can be transformed to a harvestable pair without changing the value of the associated truncated $\wcS$-MZV.
%%%%%%%%%%%%%%%%%%%%%%%%%%%%%%%%%%%%%%%%%%%%%%%%%%
\begin{proposition}\label{hv_form}
Let $X=(V, E, \rt, V_\bullet)$ be a $2$-colored rooted tree such that $\rt\in V_\bullet$ 
and $\bk$ an essentially positive index on $X$. 
Then there exists a harvestable pair $(X_\h,\bk_\h)$ such that 
\begin{equation}\label{hv}
\zeta^{}_{\wcS, M}(X; \bk)=\zeta^{}_{\wcS, M}(X_\h; \bk_\h). 
\end{equation}
Explicitly, we obtain such $(X_\h,\bk_\h)$ by the following procedures: 
\begin{enumerate}
\item In $(X,\bk)$, 
if there exists an edge $e$ satisfying the condition in Proposition~\ref{contracting1}, 
then contract $e$ according to the proposition. 
Repeat this until we obtain a pair $(X_1,\bk_1)$ without such $e$. 
\item In $(X_1,\bk_1)$, 
if there exists a pair of edges satisfying the condition in Proposition~\ref{contracting2}, 
then joint them according to the proposition. 
Repeat this until we obtain a pair $(X_2,\bk_2)$ without such edges. 
\item In $(X_2,\bk_2)$, 
if there exists a black branched point $v\neq\rt$, then insert a new white vertex $v'$ 
together with an edge $\{v,v'\}$ at the location of $v$, 
and replace the edges $\{v,w\}$ by $\{v',w\}$ for vertices $w$ whose parent is $v$ in the original tree. 
Set the component of the index on the new edge $\{v,v'\}$ to be zero. 
Note that this is the inverse operation of the contraction according to Proposition~\ref{contracting1}. 
Repeat this until we obtain a pair $(X_3,\bk_3)$ without such $v$. 
\item In $(X_3,\bk_3)$, 
if the root is not terminal, then insert a new white vertex and an edge at location of the root, 
in the same way as (iii). 
The result is $(X_\h,\bk_\h)$ we want to construct. 
\end{enumerate}
\end{proposition}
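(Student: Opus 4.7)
The plan is to verify three things in order: that each step preserves $\zeta^{}_{\wcS, M}(X; \bk)$, that each loop terminates, and that the output $(X_\h, \bk_\h)$ satisfies the harvestability conditions (H1)--(H5). Value preservation is immediate: steps (i) and (ii) are direct applications of Propositions~\ref{contracting1} and \ref{contracting2}; in steps (iii) and (iv), one inserts a white non-root vertex $v'$ together with a zero-labeled edge, and this is precisely the inverse of Proposition~\ref{contracting1}, since contracting the new edge $\{v, v'\}$ (respectively $\{\rt, v'\}$) recovers the previous pair. Termination of each loop is also clear, since step (i) strictly decreases the number of edges, step (ii) strictly decreases the number of white non-root vertices of degree~$2$, and step (iii) strictly decreases the number of non-root black branched points.

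The key invariant to track throughout is \emph{essential positivity}, which is preserved under all four operations. Indeed, the set $V_\bullet$ is never altered, and for any path between two elements of $V_\bullet$ the sum of labels is preserved: steps (i), (iii), (iv) insert or delete an edge of label~$0$, while step (ii) replaces two consecutive edges by one whose label is their sum.

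It then remains to verify (H1)--(H5) for $(X_\h, \bk_\h)$. Condition (H1) is immediate from step (iv), and (H5) is immediate from essential positivity applied to a length-one path between two elements of $V_\bullet$. For (H2), after step (ii) every white non-root vertex has degree $\neq 2$; since terminals must lie in $V_\bullet$, no such vertex can have degree~$1$, and the white vertices added in steps (iii) and (iv) are branched by construction. Condition (H3) follows from step (iii) for non-root black vertices and from step (iv) for the root. The most delicate point will be (H4): I first observe that after step (i), every edge incident to a white non-root vertex carries a strictly positive label (else step (i) would still apply), and that this property survives step (ii) since merging two positive labels yields a positive label. In steps (iii) and (iv), the newly introduced edges $\{v', w\}$ with $v' \in V_\circ$ the parent of $w$ inherit the label of an old edge $\{v, w\}$ (respectively $\{\rt, w\}$), which is positive either by essential positivity (when $w \in V_\bullet$) or by the invariant just noted (when $w \in V_\circ$); the newly added edges of label~$0$ have their white endpoint as a child of a $V_\bullet$ vertex, and therefore lie outside the scope of (H4). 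This combined bookkeeping of essential positivity together with the step-(i) positivity invariant is the main technical obstacle in the proof.
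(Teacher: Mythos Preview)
Your proof is correct and follows essentially the same approach as the paper: verify that each procedure preserves the value (via Propositions~\ref{contracting1} and~\ref{contracting2} or their inverses), that essential positivity is preserved throughout, and then check (H1)--(H5) for the output. You are more explicit than the paper in two respects: you supply termination arguments for the loops, and in verifying (H4) you correctly isolate the auxiliary invariant from step~(i) (every edge with a white non-root endpoint has positive label) as being needed alongside essential positivity to handle the case $w\in V_\circ$, whereas the paper's justification cites only essential positivity.
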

%%%%%%%%%%%%%%%%%%%%%%%%%%%%%%%%%%%%%%%%%%%%%%%%%%
The following diagrams illustrate these procedures.
\[
\begin{tikzpicture}[baseline=20pt]
%Coordinates
\coordinate (A) at (-2,2){};
\coordinate (B) at (-1,2){};
\coordinate (C) at (0,2){};
\coordinate (D) at (0.7,2){};
\coordinate (E) at (-1,1){};
\coordinate (F) at (0.7,1){};
\coordinate (G) at (0,0){};
%Lines
\draw (A) to node [left] {\tiny $k_1$} (E); 
\draw (B) to node [above left=-2pt] {\tiny $k_2$} (E); 
\draw (C) to node [right] {\tiny $0$} (E); 
\draw (E) to node [left] {\tiny $k_3$} (G); 
\draw (D) to node [right] {\tiny $k_4$} (F); 
\draw (F) to node [right] {\tiny $k_5$} (G); 
%Black nodes
\fill (A) circle (2pt) (B) circle (2pt) (C) circle (2pt) (D) circle (2pt);
\fill (-0.1,-0.1) rectangle (0.1,0.1);
%White nodes
\filldraw [fill=white] (E) circle (0.7mm) (F) circle (0.7mm);
\end{tikzpicture}
\overset{\text{(i)}}{\longrightarrow}
\begin{tikzpicture}[baseline=20pt]
%Coordinates
\coordinate (A) at (-1,2){};
\coordinate (B) at (0,2){};
\coordinate (D) at (0.5,2){};
\coordinate (E) at (-0.5,1){};
\coordinate (F) at (0.5,1){};
\coordinate (G) at (0,0){};
%Lines
\draw (A) to node [left] {\tiny $k_1$} (E); 
\draw (B) to node [right] {\tiny $k_2$} (E); 
\draw (E) to node [left] {\tiny $k_3$} (G); 
\draw (D) to node [right] {\tiny $k_4$} (F); 
\draw (F) to node [right] {\tiny $k_5$} (G); 
%Black nodes
\fill (A) circle (2pt) (B) circle (2pt) (D) circle (2pt) (E) circle (2pt);
\fill (-0.1,-0.1) rectangle (0.1,0.1);
%White nodes
\filldraw [fill=white] (F) circle (0.7mm);
\end{tikzpicture}
\overset{\text{(ii)}}{\longrightarrow}
\begin{tikzpicture}[baseline=20pt]
%Coordinates
\coordinate (A) at (-1,2){};
\coordinate (B) at (0,2){};
\coordinate (E) at (-0.5,1){};
\coordinate (F) at (0.5,1){};
\coordinate (G) at (0,0){};
%Lines
\draw (A) to node [left] {\tiny $k_1$} (E); 
\draw (B) to node [right] {\tiny $k_2$} (E); 
\draw (E) to node [left] {\tiny $k_3$} (G); 
\draw (F) to node [right] {\tiny $k_4+k_5$} (G); 
%Black nodes
\fill (A) circle (2pt) (B) circle (2pt) (E) circle (2pt) (F) circle (2pt);
\fill (-0.1,-0.1) rectangle (0.1,0.1);
%White nodes
%\filldraw [fill=white] (F) circle (0.7mm);
\end{tikzpicture}
\]
\[
\overset{\text{(iii)}}{\longrightarrow}
\begin{tikzpicture}[baseline=20pt]
%Coordinates
\coordinate (A) at (-1,2){};
\coordinate (B) at (0,2){};
\coordinate (C) at (-0.5,1.5){};
\coordinate (E) at (-0.5,1){};
\coordinate (F) at (0.5,1){};
\coordinate (G) at (0,0){};
%Lines
\draw (A) to node [left] {\tiny $k_1$} (C); 
\draw (B) to node [right] {\tiny $k_2$} (C);
\draw (C) to node [right] {\tiny $0$} (E); 
\draw (E) to node [left] {\tiny $k_3$} (G); 
\draw (F) to node [right] {\tiny $k_4+k_5$} (G); 
%Black nodes
\fill (A) circle (2pt) (B) circle (2pt) (E) circle (2pt) (F) circle (2pt);
\fill (-0.1,-0.1) rectangle (0.1,0.1);
%White nodes
\filldraw [fill=white] (C) circle (0.7mm);
\end{tikzpicture}
\overset{\text{(iv)}}{\longrightarrow}
\begin{tikzpicture}[baseline=20pt]
%Coordinates
\coordinate (A) at (-1,2){};
\coordinate (B) at (0,2){};
\coordinate (C) at (-0.5,1.5){};
\coordinate (E) at (-0.5,1){};
\coordinate (F) at (0.5,1){};
\coordinate (D) at (0,0.5){};
\coordinate (G) at (0,0){};
%Lines
\draw (A) to node [left] {\tiny $k_1$} (C); 
\draw (B) to node [right] {\tiny $k_2$} (C);
\draw (C) to node [right] {\tiny $0$} (E); 
\draw (E) to node [left] {\tiny $k_3$} (D); 
\draw (F) to node [right] {\tiny $k_4+k_5$} (D);
\draw (D) to node [right] {\tiny $0$} (G); 
%Black nodes
\fill (A) circle (2pt) (B) circle (2pt) (E) circle (2pt) (F) circle (2pt);
\fill (-0.1,-0.1) rectangle (0.1,0.1);
%White nodes
\filldraw [fill=white] (C) circle (0.7mm);
\filldraw [fill=white] (D) circle (0.7mm);
\end{tikzpicture}
\]

\begin{proof}
It is sufficient to check that the pair $(X_{\h},\bk_{\h})$ obtained by the above procedures is harvestable 
since it is obvious that the equality~\eqref{hv} holds by two propositions. 

After performing (i) (resp.~(ii), resp.~(iii) and (iv)), 
the condition (H4) (resp.~(H2), resp.~(H3) and (H1)) is satisfied. 
In particular, note that the conditions (H2) and (H4) are not violated after performing (iii) and (iv) 
since the new white color vertex is branched and $\bk$ is essentially positive on $X$. 
Moreover, each procedure keeps the essential positivity of the index, 
and hence the result $(X_\h,\bk_\h)$ satisfies (H5). 
Therefore $(X_{\h},\bk_{\h})$ is harvestable. 
\end{proof}

Even if the condition $\rt\in V_\bullet$ in Theorem \ref{thm:FT} does not hold, 
we can still describe $\zeta^{}_{\wcS,M}(X;\bk)$ for any essentially positive index $\bk$ on $X$ 
in terms of values of the map $Z^{\sh}_{\wcS,M}$ as 
\[\zeta^{}_{\wcS,M}(X;\bk)=\sum_{n=0}^\infty Z^{\sh}_{\wcS,M}(w_n)\,t^n, 
\qquad w_n\in \frH^1. \]
To obtain such an expression, we first use Proposition \ref{change of root} to change the root 
to some terminal vertex, and then apply Theorem \ref{thm:FT} to each $(X_2,\bk\oplus\bl)$ 
appearing in the right hand side of \eqref{eq:change of root}. 
In particular, the limit in the following definition exists by virtue of Theorem~\ref{lim}. 

\begin{definition}\label{def:M-->oo}
Let $X=(V, E, \rt, V_\bullet)$ be a $2$-colored rooted tree and 
$\bk=(k_e)_{e\in E}$ be an essentially positive index on $X$. 
We define the \emph{$\wcS$-MZV} $\zeta^{}_{\wcS}(X;\bk)$ \emph{associated with $X$} as 
\[
\zeta^{}_{\wcS}(X;\bk)\coloneqq\lim_{M\to\infty}\zeta^{}_{\wcS,M}(X;\bk), 
\]
where the limit is taken coefficientwise in $\cZ\jump{t}$. 
\end{definition}
%%%%%%%%%%%%%%%%%%%%%%%%%%%%%%%%%%%%%%%%%%%%%%%%%%

%%%%%%%%%%%%%%%%%%%%%%%%%%%%%%%%%%%%%%%%%%%%%%%%%%
%Section 4
%%%%%%%%%%%%%%%%%%%%%%%%%%%%%%%%%%%%%%%%%%%%%%%%%%
%%%%%%%%%%%%%%%%%%%%%%%%%%%%%%%%%%%%%%%%%%%%%%%%%%
\section{On a refinement of the conjecture of Kaneko and Zagier}\label{sec:KZ-conj}
%%%%%%%%%%%%%%%%%%%%%%%%%%%%%%%%%%%%%%%%%%%%%%%%%%
\subsection{The Kaneko--Zagier conjecture and its refinement}\label{subsec:KZ-conj}
The Kaneko--Zagier conjecture states that $\cA$-MZVs and $\cS$-MZVs satisfy exactly the same algebraic relations. 
More precisely: 
%%%%%%%%%%%%%%%%%%%%%%%%%%%%%%%%%%%%%%%%%%%%%%%%%%
\begin{conjecture}[\cite{KZ}]\label{Kanenko-Zagier conjecture}
Let $\cZ_{\cA}$ be the $\bQ$-subalgebra of $\cA$ generated by all $\cA$-MZVs.
Then there is a $\bQ$-algebra isomorphism from $\cZ_\cA$ onto $\overline{\cZ}$ 
which sends $\zeta^{}_\cA(\bk)$ to $\zeta^{}_\cS(\bk)$. 
\end{conjecture}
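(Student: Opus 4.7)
This is a well-known open conjecture, so any proof sketch I can offer is necessarily a strategy with a clearly identifiable gap. The plan is to construct a $\bQ$-algebra homomorphism $\Phi\colon \cZ_{\cA} \to \overline{\cZ}$ characterized by $\Phi(\zeta^{}_{\cA}(\bk)) = \zeta^{}_{\cS}(\bk)$, and then verify that $\Phi$ is bijective. Surjectivity is the easy direction: by Yasuda's theorem recalled in \S\ref{subsec:KZ-conj}, the elements $\zeta^{\bullet}_{\cS}(\bk)$ generate $\cZ$ as a $\bQ$-algebra, so their images generate $\overline{\cZ}$. The substantive content lies in the well-definedness of $\Phi$, i.e.\ that every $\bQ$-linear relation among the $\zeta^{}_{\cA}(\bk)$ in $\cA$ forces the corresponding relation among $\zeta^{}_{\cS}(\bk)$ in $\overline{\cZ}$, together with the injectivity, which is the reverse implication.

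First I would collect the common supply of relations known on both sides: the harmonic and shuffle relations \eqref{harmonic for FMZV} and \eqref{shuffle for FMZV}, shared by $\zeta^{}_{\cA}$ and $\zeta^{}_{\cS}$, together with further known families (Ohno-type, cyclic sum, Bowman--Bradley, etc.). These cut out a common quotient $\frH^1/I$ through which both of the maps $z_{\bk}\mapsto \zeta^{}_{\cA}(\bk)$ and $z_{\bk}\mapsto \zeta^{}_{\cS}(\bk)$ factor, so on $\frH^1/I$ the assignment is tautologically well-defined. What must be verified is that no further $\bQ$-linear relation arises on one side without arising on the other.

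The natural framework in which to attack this is Brown's motivic formalism. Both $\cA$-MZVs (via Rosen's refinement $\wcA$ and its motivic lift) and $\cS$-MZVs (via Brown's motivic MZVs modulo $\zeta^{\mathfrak{m}}(2)$) admit motivic avatars on which the motivic Galois group acts, and both should descend from periods of the motivic fundamental groupoid of $\mathbb{P}^1\setminus\{0,1,\infty\}$. The refined results of the present paper provide the crucial heuristic: Theorem~\ref{DSR for t-adic SMZV} and Theorem~\ref{thm:DSR wcA} are literally the same statement under the correspondence $t\leftrightarrow\pp$, strongly suggesting that $\zeta^{}_{\wcA}(\bk)$ and $\zeta^{}_{\wcS}(\bk)$ are two specializations of a single motivic object. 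Identifying this object and checking that it specializes correctly on each side would establish the well-definedness of the refined (and hence the original) $\Phi$.

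The main obstacle is the last step: passing from a motivic equality to an equality in the classical algebras $\cZ_{\cA}$ and $\overline{\cZ}$ requires transcendence input of Grothendieck period-conjecture type, asserting that neither side admits hidden $\bQ$-linear relations beyond the motivic ones. This is the standard bottleneck for all Kaneko--Zagier-style correspondences and is precisely why Conjecture~\ref{Kanenko-Zagier conjecture} has remained open. The best unconditional target along this route is what the present paper already contributes toward the refined conjecture, namely matching ever richer families of relations (the DSR for $\wcS$-MZVs mirroring that for $\wcA$-MZVs) on the two sides; a full proof of Conjecture~\ref{Kanenko-Zagier conjecture} would require a genuinely new transcendence-theoretic idea that I do not see how to supply.
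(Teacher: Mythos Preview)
The statement in question is Conjecture~\ref{Kanenko-Zagier conjecture}, not a theorem; the paper does not prove it and presents it explicitly as an open problem due to Kaneko and Zagier. You correctly recognize this and appropriately decline to give a proof, instead outlining the standard strategy and identifying the transcendence-theoretic bottleneck. Since the paper offers no proof, there is nothing to compare your proposal against, and your assessment that the conjecture remains open is accurate.
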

%%%%%%%%%%%%%%%%%%%%%%%%%%%%%%%%%%%%%%%%%%%%%%%%%%
If the above conjecture is true, then $\overline{\cZ}$ is generated by $\cS$-MZVs. 
In fact, Yasuda proved the following result without assuming the conjecture. 
\begin{theorem}[{\cite[Theorem 6.1]{Yas}}]\label{Yasuda's theorem}
For $\bullet\in\{*,\sh\}$, let $\cZ_{\cS}^{\bullet}$ be the $\bQ$-subalgebra of $\bR$ 
generated by all $\zeta_{\cS}^{\bullet}(\bk)$. Then we have 
\[
\cZ_{\cS}^{\bullet}=\cZ.
\]
\end{theorem}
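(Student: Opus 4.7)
The plan is to handle the easy containment by inspection and then reduce the reverse inclusion to a statement about motivic MZVs, where Brown's structure theorem supplies a clean source of generators. The inclusion $\cZ_\cS^\bullet \subseteq \cZ$ is immediate: for every index $\bh$ the regularized value $\zeta^\bullet(\bh)=\zeta^\bullet(\bh;0)$ belongs to $\cZ$, so the sum defining $\zeta_\cS^\bullet(\bk)$ is a $\bZ$-linear combination of products in $\cZ$. All of the content lies in the reverse inclusion.

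For $\cZ \subseteq \cZ_\cS^\bullet$, I would lift the construction to the Goncharov--Brown motivic MZV Hopf algebra $\mathcal{H}^{\mathfrak{m}}$: replacing each $\zeta^\bullet(\bh)$ by its motivic counterpart $\zeta^{\mathfrak{m},\bullet}(\bh)$ produces an element $\zeta^{\mathfrak{m},\bullet}_\cS(\bk)\in\mathcal{H}^{\mathfrak{m}}$ whose period is $\zeta_\cS^\bullet(\bk)$. Since the period map $\mathcal{H}^{\mathfrak{m}}\twoheadrightarrow\cZ$ is surjective (by Brown), it suffices to prove that the motivic symmetric MZVs generate $\mathcal{H}^{\mathfrak{m}}$ as a $\bQ$-algebra.

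By Brown's result that $\mathcal{H}^{\mathfrak{m}}$ is a polynomial algebra on an explicit countable set of generators indexed by weight, algebra generation reduces to checking that the images of $\zeta^{\mathfrak{m},\bullet}_\cS(\bk)$ in the Lie coalgebra $\mathcal{L}^{\mathfrak{m}}$ of indecomposables span it in every weight. I would verify this by computing the infinitesimal coactions $D_{2r+1}(\zeta^{\mathfrak{m},\bullet}_\cS(\bk))$ via the Goncharov--Brown formula, applied termwise to each factor $\zeta^{\mathfrak{m},\bullet}(\bk_{[i]})\,\zeta^{\mathfrak{m},\bullet}(\overline{\bk^{[i]}})$, while tracking the sign twists $(-1)^{\wt(\bk^{[i]})}$ from Definition~\ref{def:t-adic SMZV} and the reversal $\bk^{[i]}\mapsto\overline{\bk^{[i]}}$. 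In weight $2$ one already has $\zeta^{\mathfrak{m},\bullet}_\cS((2))=2\zeta^{\mathfrak{m}}(2)$; for each odd weight $w\geq 3$ the aim is to exhibit an explicit index (or short family) whose motivic symmetric value has nontrivial $D_w$-component, detecting $\zeta^{\mathfrak{m}}(w)$ modulo products, and then to propagate this via induction on depth and weight.

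The principal obstacle is a subtle collapsing phenomenon: the reversal $\bk\mapsto\overline{\bk}$ combined with the alternating signs $(-1)^{\wt(\bk^{[i]})}$ forces many symmetric combinations to vanish, as already seen in depth $1$ where $\zeta_\cS^\bullet((k))=(1+(-1)^k)\zeta(k)$ kills all odd $k$. The hard step is to design higher-depth indices whose coaction evades this cancellation so that $\mathcal{L}^{\mathfrak{m}}$ is actually covered weight by weight; once this is achieved the theorem follows by applying the period map. As a possible alternative to explore in parallel, one could try to bypass the motivic lift by using the series expression of Corollary~\ref{cor:limits} to realize $\zeta(\bk)$ as an explicit limit of rational combinations rearrangeable through $\zeta_\cS^\bullet$, though I expect the combinatorial core to reside in essentially the same coaction analysis.
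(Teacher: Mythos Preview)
The paper does not prove this theorem at all: it is quoted from Yasuda \cite[Theorem~6.1]{Yas} and used as a black box (in the proof of Proposition~\ref{prop:GYT}). So there is no ``paper's own proof'' to compare against; any argument you give is necessarily external to the paper.

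Regarding the content of your sketch: the easy inclusion is fine, and the strategy of lifting to motivic MZVs and invoking Brown's theorem is indeed the standard route to results of this type (and is in the spirit of Yasuda's argument). However, your proposal is not yet a proof. You correctly identify the crux---showing that the coactions $D_{2r+1}$ of motivic symmetric MZVs span $\mathcal{L}^{\mathfrak{m}}$ in every weight---but you do not carry it out; you only promise to ``design higher-depth indices whose coaction evades this cancellation.'' That is precisely the nontrivial combinatorial content of the theorem, and nothing in your write-up addresses it beyond noting that depth~$1$ alone fails for odd weight. A small additional caution: Brown's theorem gives that $\mathcal{H}^{\mathfrak{m}}$ is free on a \emph{specific} family (the Hoffman--Lyndon elements), not that any family with nonvanishing $D_w$-components generates; you would need either to hit those generators modulo decomposables or to run a genuine induction on the coradical filtration. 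Until that step is written down, the proposal is an outline with an explicit gap, not a proof.
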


As the $\wcA$-version of $\cZ_\cA\subset \cA$, we define 
\[
\cZ_{\wcA}\coloneqq 
\Biggl\{\sum_{i=1}^{\infty}a_i\zeta^{}_{\wcA}(\bk_i)\pp^{n_i}\in\wcA \ \Biggm| 
\begin{array}{l} a_i\in\bQ, \quad \bk_i\colon\text{index}, \\
n_i\in\bZ_{\geq 0} \text{ with } n_i\to\infty \  (i\to\infty)\end{array}\Biggr\}. 
\]
In the following, we use the rule that the symbol $Z$ with some suffixes denotes 
the $\bQ$-linear map determined by the corresponding zeta values with the same suffixes. 
For example, $Z_{\wcA}$ denotes the $\bQ$-linear map from $\frH^1$ to $\wcA$ 
defined by $Z_{\wcA}(z_{\bk})=\zeta_{\wcA}(\bk)$. 
Then $\cZ_{\wcA}$ is also written as 
\[\cZ_{\wcA}=\Biggl\{\sum_{n=0}^{\infty}Z_{\wcA}(w_n)\pp^n \in\wcA \ \Biggm| w_n\in\frH^1\Biggr\}. \]
%We equip $\cZ_{\wcA}$ and $\overline{\cZ}\jump{t}$ with the $\pp$-adic and $t$-adic topology, respectively. 

We equip $\cZ_{\wcA}$ and $\overline{\cZ}\jump{t}$ with the $\pp$-adic and $t$-adic topology, respectively. 
Note that $\cZ_{\wcA}$ is the $\pp$-adically complete $\bQ$-subalgebra of $\wcA$ generated by $\wcA$-MZVs and $\pp$. 
Then Kaneko--Zagier's conjecture (Conjecture~\ref{Kanenko-Zagier conjecture}) is refined as follows:
%%%%%%%%%%%%%%%%%%%%%%%%%%%%%%%%%%%%%%%%%%%%%%%%%%
\begin{conjecture}[{cf.~\cite[Conjecture~5.3.2]{J5}, \cite[Conjecture~2.3]{Ro2}}]\label{conj:RKZ}
There is a topological $\bQ$-algebra isomorphism from $\cZ_{\wcA}$ onto $\overline{\cZ}\jump{t}$ 
which sends $\zeta_{\wcA}^{}(\bk)$ to $\zeta_{\wcS}^{}(\bk)$ and $\pp$ to $t$.
\end{conjecture}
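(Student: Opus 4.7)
The plan is to attempt Conjecture~\ref{conj:RKZ} by factoring both $\cZ_{\wcA}$ and $\overline{\cZ}\jump{t}$ through a single formal algebra governed by the double shuffle relations, and then to compare. Concretely, I would introduce a complete formal DSR algebra $\frz^{\mathrm{DSR}}$ as follows: take the polynomial $\bQ$-algebra on formal symbols $\widetilde{\zeta}(\bk)$ indexed by all indices $\bk$, adjoin a formal variable $\pi$, complete $\pi$-adically, and quotient by the ideal generated by (a) the Ihara--Kaneko--Zagier extended double shuffle relations among the $\widetilde{\zeta}(\bk)$, and (b) the two families of $\pi$-adic identities modelled on Theorem~\ref{DSR for t-adic SMZV}. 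Then Theorem~\ref{thm:DSR wcA} supplies a continuous $\bQ$-algebra homomorphism $\psi_{\wcA}\colon\frz^{\mathrm{DSR}}\to\cZ_{\wcA}$ sending $\pi\mapsto\pp$ and $\widetilde{\zeta}(\bk)\mapsto\zeta^{}_{\wcA}(\bk)$, while our Theorem~\ref{DSR for t-adic SMZV} supplies $\psi_{\wcS}\colon\frz^{\mathrm{DSR}}\to\overline{\cZ}\jump{t}$ sending $\pi\mapsto t$ and $\widetilde{\zeta}(\bk)\mapsto\zeta^{}_{\wcS}(\bk)$. The desired map would be $\phi\coloneqq \psi_{\wcS}\circ\psi_{\wcA}^{-1}$, provided both $\psi_{\wcA}$ and $\psi_{\wcS}$ are isomorphisms of topological $\bQ$-algebras.

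Surjectivity of both maps is essentially already available. For $\psi_{\wcA}$ it is tautological from the definition of $\cZ_{\wcA}$ as the $\pp$-adically completed subalgebra of $\wcA$ generated by $\wcA$-MZVs; for $\psi_{\wcS}$ one invokes Jarossay's analog of Yasuda's Theorem~\ref{Yasuda's theorem}, quoted in \S\ref{subsec:KZ-conj}, which guarantees that the $\wcS$-MZVs together with $t$ topologically generate $\overline{\cZ}\jump{t}$. The substantive task is therefore to establish injectivity of each of $\psi_{\wcA}$ and $\psi_{\wcS}$. A natural intermediate target, easier to state but still very hard, is the \emph{relative} claim that $\ker\psi_{\wcA}=\ker\psi_{\wcS}$ under the natural identification of the $\pi$-adic gradings; this alone would already yield Conjecture~\ref{conj:RKZ} modulo the joint kernel.

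The decisive obstacle will be that injectivity of either map is far beyond current technology. Reducing $\psi_{\wcS}$ modulo $\pi$ recovers the extended double shuffle conjecture for $\overline{\cZ}$, and reducing $\psi_{\wcA}$ modulo $\pp$ recovers the conjecture that every algebraic relation among $\cA$-MZVs arises from DSR; Conjecture~\ref{conj:RKZ} is a simultaneous $\pi$-adic refinement of both and is strictly stronger than Conjecture~\ref{Kanenko-Zagier conjecture}. A realistic long-term strategy, as opposed to a direct attack, would be to pass to a motivic framework: construct a motivic avatar of $\cZ_{\wcA}$ (using the crystalline or Frobenius data attached to the truncations $\zeta_{p^n}^{}(\bk)\bmod p^n$) and of $\overline{\cZ}\jump{t}$ (via Brown's motivic MZVs adjoined a formal parameter, compatibly with Jarossay's description of $\wcS$-MZVs through the Drinfeld associator), prove the motivic version of Conjecture~\ref{conj:RKZ} by identifying both avatars with the same Hopf algebra of mixed Tate motives over $\bZ$ in a $\pi$-adic completion, and finally descend to the arithmetic statement via the period conjecture. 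In the absence of such a framework, the most one can realistically hope for in the near term is to prove that Conjecture~\ref{conj:RKZ} is \emph{equivalent} to a suitable refinement of Conjecture~\ref{Kanenko-Zagier conjecture}, rather than strictly stronger, by showing that DSR controls the two sides in parallel.
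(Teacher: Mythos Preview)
The statement you are addressing is a \emph{conjecture}, and the paper does not prove it. It is presented in \S\ref{subsec:KZ-conj} as a refinement of the Kaneko--Zagier conjecture (Conjecture~\ref{Kanenko-Zagier conjecture}), with attribution to Jarossay and Rosen, and is left open. There is therefore no proof in the paper to compare your proposal against.

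Your proposal is, appropriately, not a proof either: you set up a plausible strategy (factor both sides through a formal DSR algebra $\frz^{\mathrm{DSR}}$ and invert), but you correctly identify that the injectivity steps are inaccessible. Indeed, as you observe, already the reduction modulo $\pi$ on the $\wcS$-side would require the extended double shuffle conjecture for $\overline{\cZ}$, and the reduction modulo $\pp$ on the $\wcA$-side would require that all relations among $\cA$-MZVs come from DSR; neither is known. Your ``relative'' version $\ker\psi_{\wcA}=\ker\psi_{\wcS}$ is essentially a restatement of the conjecture itself rather than a reduction of it. The motivic strategy you sketch is in the spirit of what experts expect would be needed, but it rests on several further open problems (a motivic lift on the $\wcA$-side, and ultimately the period conjecture), so it is a programme rather than a proof.

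In short: there is no gap to point out in the sense of a flawed argument, because you have not claimed to give one; but you should be aware that the target statement is genuinely conjectural and the paper treats it as such.
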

%%%%%%%%%%%%%%%%%%%%%%%%%%%%%%%%%%%%%%%%%%%

The $t$-adic version of Yasuda's theorem related to the above conjecture is 
due to Jarossay \cite[Proposition~5.5]{J2}.
Here we present it in our notation for the convenience of the reader.
%%%%%%%%%%%%%%%%%%%%%%%%%%%%%%%%%%%%%%%%%%%%%%%%%%
\begin{proposition}\label{prop:GYT}
For $\bullet\in\{*,\sh\}$, we define  
\begin{align*}
\cZ_{\wcS}^\bullet&\coloneqq 
\Biggl\{\sum_{i=1}^{\infty}a_i\zeta_{\wcS}^\bullet(\bk_i)t^{n_i}\in\bR\jump{t} \ \Biggm| 
\begin{array}{l} a_i\in\bQ, \quad \bk_i\colon\text{index}, \\
n_i\in\bZ_{\geq 0} \text{ with } n_i\to\infty \  (i\to\infty)\end{array}\Biggr\}\\
&=\Biggl\{\sum_{n=0}^{\infty}Z_{\wcS}^\bullet(w_n)t^n\in\bR\jump{t} \ \Biggm| w_n\in\frH^1\Biggr\}. 
\end{align*}
Then we have
\[
\cZ_{\wcS}^{\bullet}=\cZ\jump{t}.
\]
\end{proposition}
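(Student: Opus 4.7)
The inclusion $\cZ_{\wcS}^\bullet \subseteq \cZ\jump{t}$ is immediate: each $\zeta_{\wcS}^\bullet(\bk)$ lies in $\cZ\jump{t}$ by Definition~\ref{def:t-adic SMZV}, and $\cZ\jump{t}$ is $t$-adically complete. For the reverse inclusion, the plan is to use $t$-adic successive approximation built on the following observation. The $t^0$-coefficient of $\zeta_{\wcS}^\bullet(\bk)$ equals $\zeta_\cS^\bullet(\bk)$: in the defining sum \eqref{eq:t-adic SMZV} only the summand with $\bl=(0,\ldots,0)$ contributes at $t^0$, producing $\sum_{i=0}^r(-1)^{\wt(\bk^{[i]})}\zeta^\bullet(\bk_{[i]})\zeta^\bullet(\overline{\bk^{[i]}}) = \zeta_\cS^\bullet(\bk)$. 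Extending $Z_{\wcS}^\bullet$ coefficientwise to a $\bQ$-linear map $\frH^1\jump{t} \to \cZ\jump{t}$, this identity says that $[t^0] \circ Z_{\wcS}^\bullet$ restricted to $\frH^1$ coincides with the map $Z_\cS^\bullet\colon \frH^1 \to \cZ$, $z_{\bk} \mapsto \zeta_\cS^\bullet(\bk)$.

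Granting for a moment that $Z_\cS^\bullet$ is surjective, the proof would conclude as follows. For $\xi \in \cZ\jump{t}$, the plan is to construct $w = \sum_{n\geq 0} w_n t^n \in \frH^1\jump{t}$ inductively so that
\[
\xi - Z_{\wcS}^\bullet\biggl(\sum_{n=0}^N w_n t^n\biggr) \in t^{N+1}\cZ\jump{t}
\]
for every $N$. Once $w_0, \ldots, w_{N-1}$ are fixed, the $t^N$-coefficient $c_N$ of the stage-$(N-1)$ difference lies in $\cZ$; using the surjectivity of $Z_\cS^\bullet$, we pick $w_N \in \frH^1$ with $Z_\cS^\bullet(w_N) = c_N$, which kills the $t^N$-coefficient at stage $N$. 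In the $t$-adic limit this yields $Z_{\wcS}^\bullet(w) = \xi$, and hence $\xi \in \cZ_{\wcS}^\bullet$ by the second description of that set.

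The remaining point, which I expect to be the main (though mild) obstacle, is the surjectivity of $Z_\cS^\bullet\colon \frH^1 \to \cZ$. The image is the $\bQ$-linear span of $\{\zeta_\cS^\bullet(\bk)\}_\bk$, whereas Yasuda's Theorem~\ref{Yasuda's theorem} asserts only that the $\bQ$-subalgebra $\cZ_\cS^\bullet$ generated by these elements equals $\cZ$. To upgrade ``algebra'' to ``span'', for $\bullet = *$ the plan is to take the limit $M \to \infty$ in Theorem~\ref{har for GSMZVSM*} with $t$ set to $0$, which yields $\zeta_\cS^*(\bk)\zeta_\cS^*(\bl) = \zeta_\cS^*(\bk * \bl)$, a $\bQ$-linear combination of $\zeta_\cS^*$'s; hence products of generators already lie in their span, so span equals algebra. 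For $\bullet = \sh$ a parallel identity can be obtained by combining Proposition~\ref{prop:indep-bullet} with the $*$-case, or by a direct computation using that $\zeta^\sh$ is a shuffle-algebra homomorphism. Once the surjectivity is in hand, the inductive scheme above completes the proof.
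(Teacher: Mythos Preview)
Your argument is essentially identical to the paper's: both prove $\cZ\jump{t}\subset\cZ_{\wcS}^\bullet$ by $t$-adic successive approximation, reducing at each stage to the surjectivity of $Z_\cS^\bullet\colon\frH^1\to\cZ$, which the paper simply attributes to Theorem~\ref{Yasuda's theorem} without further comment.

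You are in fact more careful than the paper in flagging that Theorem~\ref{Yasuda's theorem}, as stated there, concerns the \emph{algebra} generated by the $\zeta_\cS^\bullet(\bk)$, not their $\bQ$-linear span. Your fix for $\bullet=*$ via the harmonic relation is clean and correct. For $\bullet=\sh$, however, neither of your two suggestions quite closes the gap as written: there is no product formula of the shape $\zeta_\cS^\sh(\bk)\zeta_\cS^\sh(\bl)=Z_\cS^\sh(\cdots)$ (the relation~\eqref{shuffle for FMZV} rewrites $\zeta_\cS^\sh(\bk\sh\bl)$, not a product of two $\zeta_\cS^\sh$-values), so the ``direct computation'' route does not go through; and Proposition~\ref{prop:indep-bullet} together with the $*$-case yields only $\cZ\subset\operatorname{span}_\sh+\pi^2\cZ$, which still requires an extra step (e.g.\ induction on weight, or an appeal to the fact that Yasuda's original argument already gives the span statement) to conclude $\cZ=\operatorname{span}_\sh$. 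This is a minor loose end that the paper's own proof passes over as well.
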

%%%%%%%%%%%%%%%%%%%%%%%%%%%%%%%%%%%%%%%%%%%%%%%%%%
\begin{proof}
The inclusion $\cZ_{\wcS}^{\bullet}\subset\cZ\jump{t}$ is obvious. 
To prove the opposite inclusion, let $\Xi=\sum_{n=0}^\infty \xi_n t^n$ be an arbitrary element of $\cZ\jump{t}$. 
By Theorem~\ref{Yasuda's theorem}, we have $\xi_0=Z_{\cS}^\bullet(w_0)$ for some $w_0\in\frH^1$. 
Then we have 
$\Xi\equiv \xi_0\equiv Z_{\wcS}^\bullet(w_0)\mod t$. Hence we can write 
\[\Xi-Z_{\wcS}^\bullet(w_0)=\sum_{n=1}^\infty \xi'_n t^n\in t\cZ\jump{t}. \]
Again by Theorem \ref{Yasuda's theorem}, there is some $w_1\in\frH^1$ with $\xi'_1=Z_{\cS}^\bullet(w_1)$ and 
\[\Xi-Z_{\wcS}^\bullet(w_0)-Z_{\wcS}^\bullet(w_1)t=\sum_{n=2}^\infty\xi''_n t^n\in t^2\cZ\jump{t}, \]
and so on. Repeating this procedure, we obtain a sequence $(w_n)$ in $\frH^1$ such that 
\[\Xi-Z_{\wcS}^\bullet(w_0)-Z_{\wcS}^\bullet(w_1)t-\cdots-Z_{\wcS}^\bullet(w_n) t^n\in t^{n+1}\cZ\jump{t}. \]
This amounts to the equality $\Xi=\sum_{n=0}^\infty Z_{\wcS}^\bullet(w_n) t^n$, as required. 
\end{proof}

\subsection{$\wcS$-MZVs of Mordell--Tornheim type}\label{subsec:MT}
In this subsection, we define and study the $\cS$- and $\wcS$-MZVs of \emph{Mordell--Tornheim type}, 
which correspond to Kamano's $\cA$-MZV of Mordell--Tornheim type \cite{Kam} and its natural lift to $\wcA$ 
via Conjecture \ref{Kanenko-Zagier conjecture} and Conjecture \ref{conj:RKZ}, respectively.

First let us recall the definitions of the ($\cA$-finite) multiple zeta values of Mordell--Tornheim type. 
Let $k_1, \ldots, k_r, k_{r+1}$ be non-negative integers. Suppose that at least $r$ numbers of them are positive. 
Then the \emph{multiple zeta value of Mordell--Tornheim type} $\zeta^{\MT}(k_1, \ldots, k_r; k_{r+1})$ 
(MZV of MT type, for short) is defined by the following infinite series (cf.~\cite{M,T}): 
\[
\zeta^{\MT}(k_1, \ldots, k_r; k_{r+1})
\coloneqq \sum_{m_1, \ldots, m_r>0}\frac{1}{m^{k_1}_1 \cdots m^{k_r}_r(m_1+\cdots+m_r)^{k_{r+1}}}.
\]
As the truncation of this series, we also consider 
\[
\zeta^{\MT}_M(k_1, \ldots, k_r; k_{r+1})
\coloneqq \sum_{\substack{m_1, \ldots, m_r>0\\ m_1+\cdots+m_r<M}}
\frac{1}{m^{k_1}_1 \cdots m^{k_r}_r(m_1+\cdots+m_r)^{k_{r+1}}}. 
\]
By using this truncation, Kamano \cite{Kam} introduced 
the \emph{$\cA$-finite multiple zeta value of Mordell--Tornheim type} ($\cA$-MZV of MT type) as 
\[
\zeta^\MT_{\cA}(k_1,\ldots,k_r;k_{r+1})\coloneqq 
\bigl(\zeta^\MT_p(k_1,\ldots,k_r;k_{r+1})\bmod p\bigr)_p\in\cA, 
\]
and proved that 
\[
\zeta^\MT_{\cA}(k_1,\ldots,k_r;k_{r+1})=
\begin{cases}
Z_{\cA}\bigl((z_{k_1}\sh\cdots \sh z_{k_r})x^{k_{r+1}}\bigr) & (k_1,\ldots,k_r>0),\\
Z_{\cA}\bigl((z_{k_1}\sh\cdots\sh\check{z}_{k_i}\sh\cdots\sh z_{k_r})z_{k_{r+1}}\bigr) 
& (k_i=0), 
\end{cases} 
\]
where the symbol $\check{z}_{k_i}$ means that the factor $z_{k_i}$ is skipped 
(see \cite[Theorem 2.1]{Kam}). 
%Here and in the following, we use the rule that the symbol $Z$ with some suffixes denotes 
%the $\bQ$-linear map determined by the corresponding zeta values with the same suffixes. 
%Thus $Z_\cA$ denotes the $\bQ$-linear map from $\frH^1$ to $\cA$ defined by $Z_\cA(z_{\bk})=\zeta^{}_\cA(\bk)$. 

In fact, if we define the \emph{$\wcA$-finite multiple zeta value of Mordell--Tornheim type} ($\wcA$-MZV of MT type) by 
\[
\zeta^\MT_{\wcA}(k_1,\ldots,k_r;k_{r+1})\coloneqq 
\bigl(\bigl(\zeta^\MT_p(k_1,\ldots,k_r;k_{r+1})\bmod p^n\bigr)_p\bigr)_n\in\wcA, 
\]
then Kamano's proof also gives 
\[
\zeta^\MT_{\wcA}(k_1,\ldots,k_r;k_{r+1})=
\begin{cases}
Z_{\wcA}\bigl((z_{k_1}\sh\cdots \sh z_{k_r})x^{k_{r+1}}\bigr) & (k_1,\ldots,k_r>0),\\
Z_{\wcA}\bigl((z_{k_1}\sh\cdots\sh\check{z}_{k_i}\sh\cdots\sh z_{k_r})z_{k_{r+1}}\bigr) 
& (k_i=0). 
\end{cases} 
\]
Therefore, in view of Conjecture \ref{Kanenko-Zagier conjecture} and Conjecture \ref{conj:RKZ}, 
the $\cS$- and $\wcS$-counterparts of these values should be defined as follows: 

\begin{definition}
Let $k_1,\ldots,k_{r+1}$ be non-negative integers which are positive with at most one exception. 
Then we set 
\[
\zeta^{\MT,\sh}_{\wcS}(k_1,\ldots,k_r;k_{r+1})
\coloneqq \begin{cases}
Z^{\sh}_{\wcS}\bigl((z_{k_1}\sh\cdots \sh z_{k_r})x^{k_{r+1}}\bigr) & (k_1,\ldots,k_r>0),\\
Z^{\sh}_{\wcS}\bigl((z_{k_1}\sh\cdots\sh\check{z}_{k_i}\sh\cdots\sh z_{k_r})z_{k_{r+1}}\bigr) & (k_i=0).  
\end{cases} 
\]
Its reduction modulo $\pi^2$ is called the \emph{$t$-adic symmetric multiple zeta value of Mordell--Tornheim type} 
($\wcS$-MZV of MT type) and denoted by $\zeta^{\MT}_{\wcS}(k_1,\ldots,k_r;k_{r+1})$. 

The truncated values $\zeta^{\MT,\sh}_{\wcS,M}(k_1,\ldots,k_r;k_{r+1})\in\bQ\jump{t}$ are similarly defined, 
and we have $\lim\limits_{M\to\infty}\zeta^{\MT,\sh}_{\wcS,M}(k_1,\ldots,k_r;k_{r+1})
=\zeta^{\MT,\sh}_{\wcS}(k_1,\ldots,k_r;k_{r+1})$ by Theorem \ref{lim}. 

The $\cS$-versions of these values, $\zeta^{\MT,\sh}_{\cS}(k_1,\ldots,k_r;k_{r+1})$ etc., 
are defined in the same way. They are also obtained by substituting $t=0$ in the corresponding $\wcS$-values. 
In particular, $\zeta^\MT_{\cS}(k_1,\ldots,k_r;k_{r+1})\in\overline{\cZ}$ is called 
the \emph{symmetric multiple zeta value of Mordell--Tornheim type} ($\cS$-MZV of MT type). 
\end{definition}

\begin{remark}
Here we do not define $\zeta^{\MT,*}_{\wcS}(k_1,\ldots,k_r;k_{r+1})$; 
the authors are not sure what definition is good, if any. 
An obvious candidate is to replace $Z^{\sh}_{\wcS}$ by $Z^{*}_{\wcS}$ in the above definition, 
but we have no non-trivial result on it except its congruence to $\zeta^{\MT,\sh}_{\wcS}(k_1,\ldots,k_r;k_{r+1})$ 
modulo $\pi^2$, which follows from Proposition \ref{prop:indep-bullet}. 
\end{remark}

In what follows, we study the $\sh$-truncated $\wcS$-MZV of MT type 
as an application of our theory developed in \S\ref{sec3}. 
Let $X$ be the $2$-colored rooted tree in the figure below: 
\[
\begin{tikzpicture}
%Coordinates
\coordinate (v_1) at (-1,1.5) node at (v_1) [above left] {\tiny $v_1$};
\coordinate (v_2) at (-0.5,2.0) node at (v_2) [above] {\tiny $v_2$};
\coordinate (v_r) at (1,1.5) node at (v_r) [right] {\tiny $v_r$};
\coordinate (w) at (0,0.8) node at (w) [below left] {\tiny $w$};
\coordinate (v_{r+1}) at (0,0) node at (v_{r+1}) [left] {\tiny $v_{r+1}$};
%Lines
\draw (v_1) -- node [left] {\tiny $k_1$} (w);
\draw (v_2) -- node [above right] {\tiny $k_2$} (w);
\draw (v_r) -- node [right] {\tiny $k_r$} (w);
\draw (w) -- node [right] {\tiny $k_{r+1}$} (v_{r+1});
\draw [dotted] (-0.3,2.0) -- (0.85,1.6);
%Black nodes
\fill (v_1) circle (2pt) (v_2) circle (2pt) (v_r) circle (2pt);
\fill (-0.1,-0.1) rectangle (0.1,0.1);
%White node
\filldraw[fill=white] (w) circle[radius=0.7mm];
\end{tikzpicture}
\]
We regard an $(r+1)$-ple $\bk=(k_1,\ldots,k_{r+1})$ of non-negative integers as an index on $X$ 
as indicated in the figure. Note that $\bk$ is essentially positive if and only if 
the integers $k_1,\ldots,k_{r+1}$ are positive with at most one exception. 

%%%%%%%%%%%%%%%%%%%%%%%%%%%%%%%%%%%%%%%%%%%%%%%%%%
\begin{proposition}\label{prop:MT tree}
Let $\bk=(k_1,\ldots,k_{r+1})$ be an essentially positive index on $X$. Then we have 
\[
\zeta^{\MT,\sh}_{\wcS, M}(k_1, \ldots, k_r; k_{r+1})=\zeta^{}_{\wcS,M}(X;\bk),
\]
and hence 
\[
\zeta^{\MT,\sh}_{\wcS}(k_1, \ldots, k_r; k_{r+1})=\zeta^{}_{\wcS}(X;\bk).
\]
\end{proposition}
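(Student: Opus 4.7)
The plan is to realize $\zeta^{}_{\wcS,M}(X;\bk)$ directly as an instance of Theorem~\ref{thm: explicit calc}, after (if needed) a trivial surgery on the tree that leaves the value unchanged. Essential positivity of $\bk$ forces at most one of $k_1,\ldots,k_{r+1}$ to vanish, splitting the argument into two cases.

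First assume all of $k_1,\ldots,k_r$ are positive (so $k_{r+1}$ is an arbitrary non-negative integer). Then $(X,\bk)$ is already in the shape of Theorem~\ref{thm: explicit calc}: take $s=r$ length-one branches $\bk_i=(k_i)$ attached to the white branched vertex $w$, trunk edge of weight $k'=k_{r+1}$ descending from $w$ to the root $v_{r+1}$, and empty tail $\bk'=\varnothing$. The theorem gives
\[
\zeta^{}_{\wcS,M}(X;\bk)=Z^{\sh}_{\wcS,M}\bigl((z_{k_1}\sh\cdots\sh z_{k_r})x^{k_{r+1}}\bigr),
\]
which is exactly the first case of the definition of $\zeta^{\MT,\sh}_{\wcS,M}(k_1,\ldots,k_r;k_{r+1})$.

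Next assume $k_i=0$ for some $i\in\{1,\ldots,r\}$; essential positivity forces $k_{r+1}>0$ and $k_j>0$ for all $j\neq i$. The edge $e=\{v_i,w\}$ has weight zero and its endpoint $w$ lies in $V_\circ\setminus\{\rt\}$, so by Proposition~\ref{contracting1} we may contract $e$ without changing $\zeta^{}_{\wcS,M}$. When $r\geq 2$, to re-expose a white branched vertex we apply Proposition~\ref{contracting1} in reverse, inserting a new white vertex $v'$ together with a zero-weight edge between the merged vertex $\tilde v_i$ and its children $\{v_j:j\neq i\}$. The resulting tree $X''$ fits Theorem~\ref{thm: explicit calc} with $s=r-1$ length-one branches $\bk_j=(k_j)$ ($j\neq i$), trunk weight $k'=0$, and tail $\bk'=(k_{r+1})$; consequently
\[
\zeta^{}_{\wcS,M}(X;\bk)=Z^{\sh}_{\wcS,M}\Bigl(\bigl(\foo_{j\neq i}z_{k_j}\bigr)x^0 z_{k_{r+1}}\Bigr)=Z^{\sh}_{\wcS,M}\Bigl(\bigl(\foo_{j\neq i}z_{k_j}\bigr)z_{k_{r+1}}\Bigr),
\]
matching the second case of the definition. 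The degenerate subcase $r=1$, $k_1=0$ is already handled by Example~\ref{ex_of_2crt1} applied to the linear tree obtained after contraction.

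Taking $M\to\infty$ via Theorem~\ref{lim} and Definition~\ref{def:M-->oo} yields the unsubscripted identity $\zeta^{\MT,\sh}_{\wcS}(k_1,\ldots,k_r;k_{r+1})=\zeta^{}_{\wcS}(X;\bk)$. The only non-routine point is the combinatorial modification in the exceptional case; once the tree $X''$ is correctly identified, the application of Theorem~\ref{thm: explicit calc} is immediate.
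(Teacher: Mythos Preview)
Your proof is correct and follows essentially the same route as the paper. The paper's own proof simply invokes Proposition~\ref{hv_form} (the harvestable-form algorithm) in the case $k_i=0$ and then applies Theorem~\ref{main theorem} or Theorem~\ref{thm: explicit calc}; you carry out the relevant steps~(i) and~(iii) of that algorithm by hand (contract the zero edge via Proposition~\ref{contracting1}, then reinsert a white vertex as its inverse), arriving at the same harvestable tree and the same application of Theorem~\ref{thm: explicit calc}.
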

\begin{proof}
If $k_1,\ldots,k_r>0$, we can apply Theorem \ref{main theorem} or Theorem \ref{thm: explicit calc} to obtain the formula. 
If $k_i=0$ for some $i\in\{1,\ldots,r\}$, we first apply the algorithm of Proposition \ref{hv_form} 
with $\rt_{\h}=\rt=v_{r+1}$, and then use Theorem \ref{main theorem} or Theorem \ref{thm: explicit calc}. 
\end{proof}
From the definition of $\zeta^{}_{\wcS,M}(X;\bk)$, we can compute the $t$-adic expansion of 
the $\sh$-truncated $\wcS$-MZV of MT type. 
The coefficients are described in terms of the truncated MZVs of MT type as follows. 
\begin{proposition}\label{prop:MT t-expansion}
Let $k_1,\ldots,k_{r+1}$ be non-negative integers which are positive with at most one exception. 
Then, we have 
\begin{align*}
\zeta^{\MT,\sh}_{\wcS,M}(k_1,\ldots,k_r;k_{r+1})
&=\zeta^\MT_M(k_1,\ldots,k_r;k_{r+1})\\
&\quad+\sum_{i=1}^r(-1)^{k_i+k_{r+1}}\sum_{l,l'=0}^\infty \binom{k_i+l-1}{l}\binom{k_{r+1}+l'-1}{l'}\\
&\hspace{3cm} \times\zeta^\MT_M(k_1,\ldots,\check{k}_i,\ldots,k_r,k_{r+1}+l';k_i+l)\; t^{l+l'}. 
\end{align*}
By taking the limit $M\to\infty$, we also have the same identity without truncation. 
\end{proposition}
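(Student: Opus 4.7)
The plan is to compute $\zeta^{}_{\wcS,M}(X;\bk)$ directly from Definition~\ref{def of GSMZV assoc. with 2-crt} for the star-shaped $2$-colored rooted tree $X$ of Proposition~\ref{prop:MT tree}, which by that proposition equals $\zeta^{\MT,\sh}_{\wcS,M}(k_1,\ldots,k_r;k_{r+1})$. Writing $n_j:=m_{v_j}$, the path structure of $X$ gives $L_{\{v_i,w\}}(X,u;(m_v))=n_i+\delta_{u,v_i}t$ for $i\leq r$, and, using the balance relation $\sum_{j=1}^{r+1}n_j=0$,
\[
L_{\{w,v_{r+1}\}}(X,u;(m_v))=-n_{r+1}+\epsilon(u)\,t,
\]
where $\epsilon(u)=1$ if $u\in\{v_1,\ldots,v_r\}$ and $\epsilon(v_{r+1})=0$. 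Since $V_\bullet=\{v_1,\ldots,v_{r+1}\}$, this decomposes $\zeta^{}_{\wcS,M}(X;\bk)=\sum_u \zeta^{}_M(X,u;\bk)$ into $r+1$ pieces which I evaluate separately.

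The piece coming from $u=v_{r+1}$ is immediate: substituting $N:=-n_{r+1}$ converts the constraints into $n_1,\ldots,n_r>0$, $0<N<M$, $n_1+\cdots+n_r=N$, producing exactly $\zeta^\MT_M(k_1,\ldots,k_r;k_{r+1})$. For $u=v_i$ with $i\leq r$, I set $\alpha:=-n_i>0$ and $\beta:=n_{r+1}>0$, so the balance relation becomes $\alpha=\beta+\sum_{j\neq i,\,j\leq r}n_j$ and $-M<n_i<0$ reads $0<\alpha<M$. Expanding both $(t-\alpha)^{-k_i}$ and $(t-\beta)^{-k_{r+1}}$ via
\[
(t-x)^{-k}=(-1)^k\sum_{l\geq 0}\binom{k+l-1}{l}\frac{t^l}{x^{k+l}}
\]
and noting that $\alpha$ is precisely the ``total'' $m_1+\cdots+m_s$ of the remaining summation variables $(n_1,\ldots,\check{n}_i,\ldots,n_r,\beta)$, the sum rearranges to
\[
\zeta^{}_M(X,v_i;\bk)=(-1)^{k_i+k_{r+1}}\!\!\sum_{l,l'\geq 0}\binom{k_i+l-1}{l}\binom{k_{r+1}+l'-1}{l'}\zeta^\MT_M(k_1,\ldots,\check{k}_i,\ldots,k_r,k_{r+1}+l';k_i+l)\,t^{l+l'}.
\]

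Summing the $r+1$ contributions yields the truncated identity, and the unbounded version follows by taking $M\to\infty$ coefficient by coefficient in $t$: the left side converges by Theorem~\ref{lim} (applied via Proposition~\ref{prop:MT tree} and Definition~\ref{def:M-->oo}), and on the right side each fixed coefficient of $t^n$ involves only finitely many truncated MT-type values, each of which converges to the corresponding unbounded MT-type value under the positivity hypothesis on $(k_1,\ldots,k_{r+1})$. The argument is entirely bookkeeping; the only place requiring mild care is matching the summation index $\alpha$ (which is determined by the remaining $n_j$'s and $\beta$) with the argument $k_i+l$ of the Mordell--Tornheim value, together with the standard convention $\binom{-1}{0}=1$, $\binom{l-1}{l}=0$ $(l\geq 1)$ that handles the cases $k_i=0$ or $k_{r+1}=0$ uniformly.
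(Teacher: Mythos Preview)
Your proof is correct and follows essentially the same approach as the paper: you invoke Proposition~\ref{prop:MT tree} to identify $\zeta^{\MT,\sh}_{\wcS,M}(k_1,\ldots,k_r;k_{r+1})$ with $\zeta^{}_{\wcS,M}(X;\bk)$, then compute each summand $\zeta^{}_M(X,v_i;\bk)$ by evaluating the factors $L_e$ explicitly and expanding $(t-\alpha)^{-k_i}$ and $(t-\beta)^{-k_{r+1}}$ as power series in $t$, exactly as the paper does. Your treatment of the limit $M\to\infty$ and of the boundary cases $k_i=0$ or $k_{r+1}=0$ via the binomial convention is also in line with the paper's conventions.
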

\begin{proof}
Set $m_i\coloneqq m_{v_i}$ ($1 \leq i \leq r+1$), 
$M_{i,j}\coloneqq m_i+\cdots+m_j$ and $M_j\coloneqq M_{1,j}$ as in \eqref{eq:M_ij}, 
and let $e_a$ denotes the edge $\{w,v_a\}$. 
Then we have 
\[
L_{e_a}\bigl(X,v_i; (m_v)_{v\in V_{\bullet}}\bigr)=
\begin{cases}
m_a & (a\ne i),\\
m_a+t & (a=i)
\end{cases}
\]
for $1\leq a\leq r$ and 
\[
L_{e_{r+1}}\bigl(X,v_i; (m_v)_{v\in V_{\bullet}}\bigr)=
\begin{cases}
M_r & (r+1=i),\\
M_r+t & (r+1\ne i). 
\end{cases}
\]
Thus we obtain 
\[
\zeta^{}_M(X, v_{r+1}; \bk)
=\sum_{\substack{m_1, \ldots, m_r>0 \\ M_r < M}}
\frac{1}{{m\mathstrut}_1^{k_1}\cdots {m\mathstrut}_r^{k_r}{M\mathstrut}_r^{k_{r+1}}}
=\zeta^\MT_M(k_1,\ldots,k_r;k_{r+1}) 
\]
and, for $1\leq i\leq r$, 
\begin{align*}
&\zeta^{}_M(X, v_i; \bk)\\
&=\sum_{\substack{m_1, \ldots, m_{i-1}, m_{i+1}, \ldots, m_{r+1} \geq 1 \\ -M < m_i <0 \\ M_{r+1}=0}}
\frac{1}{{m\mathstrut}_1^{k_1}\cdots {m\mathstrut}_{i-1}^{k_{i-1}}{(m_i+t)\mathstrut}^{k_i}
{m\mathstrut}_{i+1}^{k_{i+1}}\cdots {m\mathstrut}_r^{k_r}{(M_r+t)\mathstrut}^{k_{r+1}}}\\
&=\sum_{\substack{m_1, \ldots, m_{i-1}, m_{i+1}, \ldots, m_{r+1} \geq 1 \\ M_{i-1}+M_{i+1, r+1}< M}}
\frac{1}{{m\mathstrut}_1^{k_1}\cdots {m\mathstrut}_{i-1}^{k_{i-1}}
{(t-M_{i-1}-M_{i+1, r+1})\mathstrut}^{k_i}
{m\mathstrut}_{i+1}^{k_{i+1}}\cdots {m\mathstrut}_r^{k_r}{(t-m_{r+1})\mathstrut}^{k_{r+1}}}. 
\end{align*}
By expanding $(t-M_{i-1}-M_{i+1,r+1})^{-k_i}$ and $(t-m_{r+1})^{-k_{r+1}}$ in $t$, we see that 
\begin{multline*}
\zeta_M(X,v_i;\bk)
=(-1)^{k_i+k_{r+1}}\sum_{l,l'=0}^\infty \binom{k_i+l-1}{l}\binom{k_{r+1}+l'-1}{l'}\\
\times\zeta^\MT_M(k_1,\ldots,\check{k}_i,\ldots,k_r,k_{r+1}+l';k_i+l)\; t^{l+l'}. 
\end{multline*}
Therefore, the summation on $i=1,\ldots,r+1$ gives the desired formula. 
\end{proof}

%%%%%%%%%%%%%%%%%%%%%%%%%%%%%%%%%%%%%%%%%%%%%%%%%%%%
\begin{remark}
By setting $t=0$ in Proposition \ref{prop:MT t-expansion}, we have 
\[
\zeta^{\mathrm{MT},\sh}_{\cS}(k_1,\dots,k_r;k_{r+1})
=\sum_{i=1}^{r+1}(-1)^{k_i+k_{r+1}}\zeta^\MT(k_1,\ldots,\check{k}_i,\ldots,k_{r+1};k_i).
\]
This is equal to $(-1)^{k_{r+1}}\Omega(\bk)$ with $\bk=(k_1,\ldots,k_{r+1})$, where $\Omega(\bk)$ denotes 
the symmetric MZV of Mordell--Tornheim type recently introduced by Bachmann--Takeyama--Tasaka \cite[\S4.5]{BTT}. 
\end{remark}
%%%%%%%%%%%%%%%%%%%%%%%%%%%%%%%%%%%%%%%%%%%%%%%%%%

Proposition \ref{change of root} gives the following relation among 
the $\sh$-truncated $\wcS$-MZVs of MT type. 
Note that this involves an infinite series and is \emph{not} a linear relation in the algebraic sense. 

%%%%%%%%%%%%%%%%%%%%%%%%%%%%%%%%%%%%%%%%%%%%%%%%%%
\begin{proposition}\label{prop:Kam3.1}
Let $k_1,\dots,k_r,k_{r+1}$ be non-negative integers which are positive with at most one exception. 
Then we have
\begin{equation}\label{eq:root_change_wcSMT}
\begin{split}
\zeta^{\MT,\sh}_{\wcS,M}(k_1,\dots,k_r;k_{r+1})
&=(-1)^{k_1+k_{r+1}}\sum_{l,l' \ge0}\binom{k_1+l-1}{l}\binom{k_{r+1}+l'-1}{l'}\\
&\hspace{3cm} \times\zeta^{\MT,\sh}_{\wcS,M}(k_{r+1}+l',k_2,\dots,k_r;k_{1}+l)t^{l+l'}.
\end{split}
\end{equation}
\end{proposition}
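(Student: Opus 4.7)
The idea is to apply the root-changing formula (Proposition \ref{change of root}) to the tree representation of the $\sh$-truncated $\wcS$-MZV of MT type afforded by Proposition \ref{prop:MT tree}. The right-hand side of \eqref{eq:root_change_wcSMT} should arise as the expansion obtained when we move the root from $v_{r+1}$ to the leaf $v_1$ of the star-shaped tree $X$ associated with $(k_1,\ldots,k_r;k_{r+1})$.

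First I would use Proposition \ref{prop:MT tree} to identify
\[
\zeta^{\MT,\sh}_{\wcS,M}(k_1,\ldots,k_r;k_{r+1}) = \zeta^{}_{\wcS,M}(X;\bk),
\]
where $X$ is the star tree with root $v_{r+1}$, central white vertex $w$, and leaves $v_1,\ldots,v_r$, and $\bk$ is the index with $k_i$ on the edge $\{w,v_i\}$ for $1\le i\le r$ and $k_{r+1}$ on $\{w,v_{r+1}\}$. Let $X'$ denote the same underlying tree but with $v_1$ taken as the root in place of $v_{r+1}$. The path $P(v_{r+1},v_1)$ consists of exactly two edges, $\{v_{r+1},w\}$ (carrying index $k_{r+1}$) and $\{w,v_1\}$ (carrying index $k_1$), so Proposition \ref{change of root} yields
\[
\zeta^{}_{\wcS,M}(X;\bk) = (-1)^{k_1+k_{r+1}} \sum_{l,l'\ge0} \binom{k_1+l-1}{l}\binom{k_{r+1}+l'-1}{l'}\,\zeta^{}_{\wcS,M}\bigl(X';\bk\oplus(l,l')\bigr)\,t^{l+l'},
\]
where $\bk\oplus(l,l')$ assigns weight $k_1+l$ to $\{v_1,w\}$, weight $k_{r+1}+l'$ to $\{w,v_{r+1}\}$, and keeps weights $k_i$ on the other edges $\{w,v_i\}$ ($2\le i\le r$).

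Next I would reinterpret the truncated $\wcS$-MZV associated with $X'$ as an MT-type value. Since the root $v_1$ of $X'$ is a leaf attached to the central vertex $w$ by an edge of weight $k_1+l$, and $w$ carries the remaining leaves $v_2,\ldots,v_r,v_{r+1}$ via edges of weights $k_2,\ldots,k_r,k_{r+1}+l'$, applying Proposition \ref{prop:MT tree} to $X'$ gives
\[
\zeta^{}_{\wcS,M}\bigl(X';\bk\oplus(l,l')\bigr) = \zeta^{\MT,\sh}_{\wcS,M}(k_2,\ldots,k_r,k_{r+1}+l';\,k_1+l).
\]
By the commutativity of the shuffle product, $\zeta^{\MT,\sh}_{\wcS,M}$ is symmetric in its first $r$ arguments, so this equals $\zeta^{\MT,\sh}_{\wcS,M}(k_{r+1}+l',k_2,\ldots,k_r;\,k_1+l)$. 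Substituting into the previous display yields \eqref{eq:root_change_wcSMT}.

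The step requiring the most care is verifying that the hypotheses of Propositions \ref{change of root} and \ref{prop:MT tree} are met throughout, namely the essential positivity of the indices on both $X$ and $X'$. The assumption that at most one of $k_1,\ldots,k_{r+1}$ vanishes guarantees that the sum $k_i+k_j$ along any path between two distinct vertices of $V_\bullet$ is positive, so $\bk$ is essentially positive on $X$; because $l,l'\ge 0$, the modified weights $k_1+l$ and $k_{r+1}+l'$ still leave at most one zero entry in $\bk\oplus(l,l')$, so essential positivity on $X'$ also holds. Finally, the convention $\binom{l-1}{l}=\delta_{l,0}$ makes the formula consistent when $k_1=0$ or $k_{r+1}=0$: the corresponding index on the $l$- or $l'$-sum collapses to zero, matching the definition of $\zeta^{\MT,\sh}_{\wcS,M}$ in those degenerate cases.
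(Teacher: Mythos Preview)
Your proposal is correct and follows exactly the approach of the paper: apply Proposition~\ref{change of root} to swap the root of the star tree from $v_{r+1}$ to $v_1$, then identify both sides via Proposition~\ref{prop:MT tree}. The paper's proof is a single sentence to this effect; you have correctly filled in the details, including the verification of essential positivity and the use of the symmetry of $\zeta^{\MT,\sh}_{\wcS,M}$ in its first $r$ arguments to reorder $(k_2,\ldots,k_r,k_{r+1}+l')$ as $(k_{r+1}+l',k_2,\ldots,k_r)$.
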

%%%%%%%%%%%%%%%%%%%%%%%%%%%%%%%%%%%%%%%%%%%%%%%%%%
\begin{proof}
By swapping the root $v_{r+1}$ with $v_1$ according to Proposition~\ref{change of root}, 
we have the desired formula.
\end{proof}

By writing down both sides of \eqref{eq:root_change_wcSMT} in terms of $Z^{\sh}_{\wcS,M}$, 
we obtain an identity of series involving infinitely many $\sh$-truncated $\wcS$-MZVs. 
In particular, by setting $t=0$, this gives a linear relation among $\sh$-truncated $\cS$-MZVs. 
This is an analogue of \cite[Theorem 3.2]{Kam}. 

\begin{corollary}
Let $k_1,\ldots,k_r$ be positive integers. 
Then we have 
\[
Z^{\sh}_{\cS, M}(z_{k_1} \sh\cdots\sh z_{k_r})
=(-1)^{k_1}Z^{\sh}_{\cS,M}((z_{k_2}\sh\cdots\sh z_{k_r})z_{k_1})
\]
and 
\[
Z^{\sh}_{\cS, M}((z_{k_1} \sh\cdots\sh z_{k_r})x^l)
=(-1)^{k_1+l}Z^{\sh}_{\cS, M}((z_{k_2} \sh \cdots \sh z_{k_r}\sh z_l)x^{k_1})
\]
for any integer $l\geq 1$. 
\end{corollary}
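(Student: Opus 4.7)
The plan is to deduce both identities as the specialization $t=0$ of Proposition~\ref{prop:Kam3.1}. Setting $t=0$ in \eqref{eq:root_change_wcSMT} kills every term with $l+l'>0$, and in the surviving $l=l'=0$ term the binomial coefficients evaluate as $\binom{k_1-1}{0}=1$ (since $k_1\geq 1$) and $\binom{k_{r+1}-1}{0}=1$ (for $k_{r+1}\geq 1$ by the usual meaning, and for $k_{r+1}=0$ by the stated convention $\binom{-1}{0}=1$). Thus we obtain the single reflection identity
\[
\zeta^{\MT,\sh}_{\cS,M}(k_1,\dots,k_r;k_{r+1})
=(-1)^{k_1+k_{r+1}}\zeta^{\MT,\sh}_{\cS,M}(k_{r+1},k_2,\dots,k_r;k_{1}),
\]
valid whenever $(k_1,\ldots,k_r,k_{r+1})$ is essentially positive.

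Next I would specialize this relation to the two cases at hand. For the first equation, take $k_{r+1}=0$: on the left, all of $k_1,\ldots,k_r$ are positive, so the first branch of the definition of $\zeta^{\MT,\sh}_{\cS,M}$ gives $Z^{\sh}_{\cS,M}(z_{k_1}\sh\cdots\sh z_{k_r})$ (using $x^0=1$); on the right, the tuple $(0,k_2,\ldots,k_r;k_1)$ has a zero in the first slot, so the second branch with $i=1$ applies and produces $Z^{\sh}_{\cS,M}\bigl((z_{k_2}\sh\cdots\sh z_{k_r})z_{k_1}\bigr)$. Combining yields the first claimed equality. For the second equation, take $k_{r+1}=l\geq 1$: both sides then lie in the first branch, and the reflection identity reads
\[
Z^{\sh}_{\cS, M}\bigl((z_{k_1} \sh\cdots\sh z_{k_r})x^l\bigr)
=(-1)^{k_1+l}\,Z^{\sh}_{\cS, M}\bigl((z_l\sh z_{k_2} \sh \cdots \sh z_{k_r})x^{k_1}\bigr),
\]
which is the second claim after using commutativity of $\sh$ to reorder the factors inside the shuffle.

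The only mildly delicate point is the careful tracking of the branch choice in the definition of $\zeta^{\MT,\sh}_{\cS,M}$ when a zero entry occurs: it is the skipping of $\check{z}_{0}$ in the second branch that turns the reflection of tuples into the appearance of the letter $z_{k_1}$ on the right (rather than $x^{k_1}$) in the first identity. Apart from this bookkeeping and the convention for $\binom{-1}{0}$, the corollary is obtained by direct substitution of $t=0$ into Proposition~\ref{prop:Kam3.1}, together with the already-established translation between $\zeta^{\MT,\sh}_{\cS,M}$ and values of $Z^{\sh}_{\cS,M}$.
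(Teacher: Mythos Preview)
Your proposal is correct and follows exactly the approach indicated in the paper: the corollary is obtained by setting $t=0$ in Proposition~\ref{prop:Kam3.1} (so that only the $l=l'=0$ term survives, using the convention $\binom{-1}{0}=1$ when $k_{r+1}=0$) and then translating both sides via the definition of $\zeta^{\MT,\sh}_{\cS,M}$ into values of $Z^{\sh}_{\cS,M}$. Your careful tracking of which branch of the definition applies when a zero entry appears is precisely the bookkeeping the paper leaves implicit.
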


%%%%%%%%%%%%%%%%%%%%%%%%%%%%%%%%%%%%%%%%%%%%%%%%%%
%%%%%%%%%%%%%%%%%%%%%%%%%%%%%%%%%%%%%%%%%%%%%%%%%%
%references
%%%%%%%%%%%%%%%%%%%%%%%%%%%%%%%%%%%%%%%%%%%%%%%%%%
%%%%%%%%%%%%%%%%%%%%%%%%%%%%%%%%%%%%%%%%%%%%%%%%%%

\end{document}